\definecolor{linkblue}{rgb}{0,0.2,0.6}
\newtheorem{theorem}{Theorem}[section]
\newtheorem{proposition}[theorem]{Proposition}
\newtheorem{corollary}[theorem]{Corollary}
\newtheorem{conjecture}[theorem]{Conjecture}
\newtheorem{lemma}[theorem]{Lemma}
\theoremstyle{definition}
\newtheorem{example}[theorem]{Example}
\newtheorem{definition}[theorem]{Definition}
\newtheorem{remark}[theorem]{Remark}
\newtheorem{strategy}[theorem]{Strategy}
\newtheorem{algorithm}[theorem]{Algorithm}
\newcommand{\ov}[1]{\overline{#1}}
\newcommand{\op}[1]{\operatorname{#1}}
\newcommand{\ovop}[1]{\ov{\op{#1}}}
\newcommand{\PP}{\mathbb{P}}
\newcommand{\QQ}{\mathbb{Q}}
\newcommand{\CC}{\mathbb{C}}
\newcommand{\ZZ}{\mathbb{Z}}
\newcommand{\cO}{\mathcal{O} }
\def\Mzn{\ovop{M}_{0,n} }
\def\Mza{\ovop{M}_{0,A} }
\def\Ln{\ovop{L}_{n} }
\def\Lnt{\ovop{L}_{n-2} }
\def\CLn{\mathrm{Chow}_{1}(\Ln) }
\begin{document}

\pagenumbering{arabic}
\title{Effective curves on $\Mzn$ from group actions}
\date{\today}

\author{Han-Bom Moon}
\address{Department of Mathematics, Fordham University, Bronx, NY 10458, USA}
\email{hmoon8@fordham.edu}

\author{David Swinarski}
\address{Department of Mathematics, Fordham University, New York, NY 10023, USA}
\email{dswinarski@fordham.edu}

\begin{abstract}We study new effective curve classes on the moduli
  space of stable pointed rational curves given by the fixed loci of
  subgroups of 
  the permutation group action. We compute their numerical classes and
  provide a strategy for writing them as effective linear combinations of F-curves, using Losev-Manin spaces and toric degeneration of curve classes.
\end{abstract}

\maketitle

\section{Introduction}

One of the central problems in the birational geometry of a projective
variety $X$ is determining its cone of effective curves
$\ovop{NE}_{1}(X)$. In the minimal model
program, this is the first step toward understanding and classifying
all the contractions of $X$, that is, projective morphisms from $X$ to other varieties.

Let $\Mzn$ be the moduli space of stable $n$-pointed rational
curves. Because Kapranov's construction (\cite[Theorem 4.3.3]{Kap93b})
is very similar to a blow-up construction of a toric variety, many
people wondered if the birational geometry of $\Mzn$ might be similar
to that of toric varieties. For instance, the cone of effective cycles
of a toric variety is generated by its torus invariant boundaries;
$\Mzn$ was conjectured to have a similar property. 

\begin{conjecture}[\protect{\cite[Question 1.1]{KM96}}]\label{conj:Fconjecture}
The cone of $k$-dimensional effective cycles is generated by $k$-dimensional intersections of boundary divisors, for $1 \le k \le \dim \Mzn - 1$. 
\end{conjecture}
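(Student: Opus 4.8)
Conjecture~\ref{conj:Fconjecture} is the Keel--McKernan F-conjecture, and in its full generality it is open; even the curve case ($k=1$) on $\Mzn$ is unknown for large $n$. So the plan is not to prove the statement outright but to describe the line of attack, which concentrates on $k=1$ --- equivalently, on the assertion that the nef cone $\Nef^{1}(\Mzn)$ is dual to the cone spanned by the F-curves --- and which proceeds by \emph{enlarging} the known stock of effective curves and then certifying the new classes against the F-cone. (The reduction of the higher-dimensional cases to this one, in the spirit of Gibney--Keel--Morrison's reduction of the divisor case on $\ov M_{g,n}$ to $\Mzn$, I would treat separately; the genuinely new geometric input is for curves.)

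First I would manufacture effective curves from the symmetric group action: for a subgroup $G \le S_{n}$, the fixed locus $\Mzn^{G}$ is closed, so each of its one-dimensional components is an effective curve, and the task is to decide when it is a curve and to compute its numerical class. The key point is that $\Mzn^{G}$ parametrizes $G$-equivariant configurations, hence is (a component of) a moduli space of the same flavour attached to the quotient data --- a Hassett-type weighted space on fewer markings --- so restricting the tautological generators ($\psi$-classes and boundary divisors) to it reduces the class computation to intersection theory on a smaller, well-understood space. This also organizes an induction on $n$, since forgetting markings and passing to boundary strata relates these curves on $\Mzn$ to the analogous curves on $\Mzn$ with fewer points.

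Next I would test a candidate class against the F-cone by a toric degeneration. The plan is to pass to the Losev--Manin space $\Lnt$ (two heavy and $n-2$ light markings), which is the projective toric variety of the permutohedron, together with the reduction morphism $\Mzn \to \Lnt$. On a toric variety the cone of effective curves is generated by the torus-invariant curves, and those pull back along the reduction map to (combinations of) F-curves; so pushing a curve class forward to $\Lnt$, decomposing it there, and pulling the decomposition back should write the original class as a nonnegative combination of F-curves plus a correction supported on the locus contracted by the reduction map. Running this over the many choices of heavy/light markings (equivalently, over the different Kapranov-type models of $\Mzn$) one hopes the corrections cancel.

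The hard part will be exactly this last point: the reduction morphism $\Mzn \to \Lnt$ contracts positive-dimensional boundary strata, so the degenerate cycle need not be effective on $\Mzn$, and there is no reason a priori that a correction term supported on the contracted locus is itself a nonnegative combination of F-curves. Showing that, after combining enough toric models, these correction terms either cancel or are F-effective is precisely the obstruction that has kept the conjecture open, so the realistic goal is to push this program through for the explicit classes produced in the first step rather than for an arbitrary effective curve.
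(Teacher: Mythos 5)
This statement is an open conjecture quoted from Keel--McKernan, and the paper offers no proof of it --- indeed it records that even the $k=1$ case is unknown for $n>7$ --- so there is nothing for your proposal to be checked against as a proof, and you are right not to claim one. What you outline is essentially the paper's own program: produce the invariant curves $\Mzn^{G}$, compute their boundary intersection numbers, push forward to the Losev--Manin space $\Lnt$, decompose there into torus-invariant curves, and pull back by proper transform at the cost of a non-effective correction supported on the exceptional loci of $\rho$. The one place you diverge is in how the correction is handled: the paper does not vary over many toric models hoping for cancellation, but instead fixes one reduction map and then repairs the resulting non-effective $\ZZ$-linear combination by adding numerically trivial Keel relations, found by a combinatorial search (their Strategy 7.8, which they note is not guaranteed to terminate). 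Either way, this yields only case-by-case verifications for specific curve classes, not a proof of the conjecture, and you correctly flag that the general correction-term problem is exactly the open obstruction.
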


But in the last decade, there have been several striking results
showing 
that the divisor theory of $\Mzn$ is more complicated than that of toric varieties. For instance, there are non-boundary type extremal effective divisors on $\Mzn$ (\cite{Ver02, CT13a, DGJ14}). Furthermore, very recently Castravet and Tevelev showed that $\Mzn$ is not a Mori dream space for large $n$ (\cite{CT13b}). 

On the other hand, for effective curve classes, there are few known
results in the literature. In \cite{KM96}, Keel and McKernan proved
Conjecture \ref{conj:Fconjecture} for cycles of dimension 
$k = 1$ and $n \le 7$, but it is still unknown for $n > 7$. Conjecture
\ref{conj:Fconjecture} for $k=1$ is now widely known as the
\emph{F-Conjecture}, and one-dimensional intersections of boundary
divisors are called \emph{F-curves}. Keel and McKernan also showed
that if there is any other extremal ray $R$ of the curve cone
$\ovop{NE}_{1}(\Mzn)$ and $\ovop{NE}_{1}(\Mzn)$  is not round at $R$,
then $R$ is generated by a rigid curve intersecting the interior, $\op{M}_{0,n}$ (See \cite[Theorem 2.2]{CT12} for a proof).
This has motivated several researchers 
to search for rigid curves on $\Mzn$ to find a potential counterexample to the
F-Conjecture. Castravet and Tevelev constructed rigid curves by
applying their hypergraph construction in \cite{CT12}. There is a
slightly weaker notion of the rigidity of curves (so-called rigid
maps). In \cite{CT12} and \cite{Che11}, Castravet, Tevelev, and Chen  constructed two
types of examples of rigid maps. But all of these examples are
numerically equivalent to effective linear combinations of F-curves,
thus they do not give counterexamples to the F-conjecture. To our
knowledge, these examples, the F-curves, and some curve
classes that arise from obvious families of point configurations are the only explicit examples of effective curves on $\Mzn$
in the literature.

%\medskip

\subsection{Aim of this paper}

This project began because we wanted to study the geometric
and numerical properties of some new effective curves on $\Mzn$
that arise from a finite group action.

There is a natural $S_{n}$-action on $\Mzn$
permuting the marked points. Let $G$ be a subgroup of $S_{n}$. Let $\Mzn^{G}$ be the union of irreducible components of the $G$-fixed locus that intersect the interior $\mathrm{M}_{0,n}$. If we impose certain numerical conditions on $G$, then $\Mzn^{G}$ becomes an irreducible curve on $\Mzn$. 

The computation of the numerical class of $\Mzn^{G}$ (or equivalently,
its intersection with boundary divisors) is elementary. But
surprisingly, there has been no study of these curve classes on
$\Mzn$. We believe that the reason is that even though it is
straightforward to compute the numerical class of such a curve, it is
difficult to
determine whether the curve is numerically equivalent to an
\emph{effective} linear combination of F-curves. Indeed, it is a
difficult computation to find an actual effective linear combination
of F-curves, and that led to the main result of our paper. 

%\medskip

\subsection{Main result}

The main result of this paper is not a single theorem, but a method to
approach this computational problem using Losev-Manin spaces $\Ln$
(\cite{LM00}) and toric degenerations. Losev-Manin spaces $\Ln$ are 
special cases of Hassett's moduli spaces of stable weighted pointed
rational curves (\cite{Has03}). As moduli spaces, they parametrize
pointed chains of rational curves, and they are contractions of
$\ovop{M}_{0,n+2}$. A significant geometric property of $\Ln$ is that
it is the closest toric variety to $\ovop{M}_{0,n+2}$ among Hassett's
spaces. $\Ln$ is a toric variety whose corresponding polytope is the
permutohedron of dimension $n-1$ (\cite[Section 7.3]{GKZ08}). We give a method to
compute a toric degeneration of an effective curve class on $\Ln$. The computation is a result of an interesting interaction between the moduli theoretic interpretation of $\Ln$ and the combinatorial structure of the permutohedron. 

For any effective curve class $C$ on $\Mzn$, we are able to compute
the numerical class of its image $\rho(C)$ for $\rho : \Mzn\to
\Lnt$. By using the toric degeneration method, we can find an
effective linear combination of one dimensional toric boundaries
representing $\rho(C)$. Since each toric boundary component is the
image of a unique F-curve, it is an ``approximation'' of the effective
linear combination for $C$. By taking the proper transform, we have a (not necessarily effective) 
linear combination of F-curves for $C$ on $\Mzn$. To find an effective
linear combination, we use a computational strategy described in Section \ref{sec:efflincomb}. 

It is significant to note that our computational strategy doesn't use any special properties of the
finite group action used to define the curves $\Mzn^{G}$, and thus we believe that the approach using
Losev-Manin spaces and toric degenerations may also be applicable to
other curve classes not of the form $\Mzn^{G}$.  For example, 
this could give a second approach to analyzing hypergraph
curves whose classes are computed in \cite{CT12} using the technique of arithmetic breaks. 

In this paper, we study $\Mzn^{G}$ when $G$ is either a cyclic group
or a dihedral group. 

%\medskip

\subsection{Cyclic group case}

If $G$ is a cyclic group, the geometric and numerical properties of
$\Mzn^{G}$ are relatively easy to prove without using technical tools
such as toric degeneration.  We prove the following theorem for arbitrary $n$.

\begin{theorem}\label{thm:mainthmcyclic}
Let $G = \langle\sigma\rangle$ be a cyclic group.
\begin{enumerate}
\item (Lemma \ref{lem:nonempty}) The invariant subvariety $\Mzn^{G}$ is nonempty if and only if
  $\sigma$ is \emph{balanced} (see Definition \ref{def:balanced}).
\item (Lemmas \ref{lem:irreducible}, \ref{lem:isomorphictoP1}) The invariant subvariety $\Mzn^{G}$ is irreducible and when it is a curve, it is isomorphic to $\PP^{1}$. 
\item (Theorem \ref{thm:cycliceffectivesumofFcurves}) In this case, $\Mzn^{G}$ is numerically equivalent to a linear combination of F-curves such that all coefficients are one. 
\item (Theorem \ref{thm:cycliccasemovable}) $\Mzn^{G}$ is movable. 
\end{enumerate}
\end{theorem}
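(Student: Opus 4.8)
The plan is to take the four parts in turn, with the substantive work in (3) and (4).

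For (1) and (2) the point is to describe $\Mzn^{G}$ on the interior explicitly. A point $[\PP^{1},p_{1},\dots,p_{n}]\in\mathrm{M}_{0,n}$ lies in $\Mzn^{G}$ exactly when there is $\phi\in\op{Aut}(\PP^{1})$ with $\phi(p_{i})=p_{\sigma(i)}$ for all $i$; for $n\ge 3$ this $\phi$ is unique, $\phi^{d}$ fixes all marked points so $\phi^{d}=\mathrm{id}$, and comparing induced permutations forces $\op{ord}(\phi)=d:=|\sigma|$. Since every order-$d$ automorphism of $\PP^{1}$ is conjugate to $z\mapsto\zeta_{d}z$, which has exactly two fixed points and otherwise only free orbits of size $d$, the cycle type of $\sigma$ must be \emph{balanced} (at most two fixed points, all other cycles of length $d$); conversely a balanced $\sigma$ is realized by putting the fixed marked points at $0$ and/or $\infty$ and choosing one representative in $\PP^{1}\setminus\{0,\infty\}$ for each length-$d$ cycle, which gives (1). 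This description also exhibits $\Mzn^{G}\cap\mathrm{M}_{0,n}$ as the quotient of an open subset of $(\CC^{*})^{m}$ (with $m$ the number of length-$d$ cycles and the coordinates the chosen representatives) by the residual $1$-dimensional group of automorphisms normalizing $z\mapsto\zeta_{d}z$; this quotient is irreducible and rational of dimension $m-1$, so $\Mzn^{G}$ — its closure, and the only component of the $G$-fixed locus meeting $\mathrm{M}_{0,n}$ — is irreducible. For (2) one then invokes that a component of the fixed locus of a finite group acting on the smooth variety $\Mzn$ is smooth; when $m=2$ (a numerical condition on $\sigma$) the curve $\Mzn^{G}$ is thus smooth, projective and rational, hence $\cong\PP^{1}$.

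For (3) I would compute $[\Mzn^{G}]$ through its intersection numbers with the boundary divisors $\delta_{I}$; since these span $\Pic(\Mzn)_{\QQ}$, they determine the numerical class. From the parametrization above, the points of $\Mzn^{G}$ lying on a given $\delta_{I}$ are precisely the degenerations of the symmetric configuration in which chosen representatives collide with one another or run into the branch points $0,\infty$ — these can be listed combinatorially from $I$ and the cycle structure of $\sigma$, and each is a transverse intersection. One then writes down the candidate combination: the sum, with all coefficients $1$, of the F-curves $F_{N_{1},N_{2},N_{3},N_{4}}$ whose four-part partitions record exactly the ways the cyclic configuration can break, and checks, using the standard formula for $F\cdot\delta_{I}$, that $\bigl(\sum_{j}F_{j}\bigr)\cdot\delta_{I}=\Mzn^{G}\cdot\delta_{I}$ for every $I$. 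A cleaner variant, worth carrying out in parallel, is to use the degree-$d$ quotient $\PP^{1}\to\PP^{1}/G$, branched over two points, to identify $\Mzn^{G}$ with $\ovop{M}_{0,4}$ (marked by the two branch points and the two orbit-images) and to track how the resulting cyclic-covering morphism $\ovop{M}_{0,4}\to\Mzn$ factors through boundary strata; its pushforward then yields the F-curve decomposition directly.

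For (4) I would produce a covering family of curves numerically equivalent to $\Mzn^{G}$. Starting from the parametrization $t\mapsto[\PP^{1};\text{ the }\zeta_{d}\text{-orbits of }t\text{ and of the seeds}]$ of $\Mzn^{G}$, deform the seed of each cycle and the positions of the fixed marked points by independent free parameters, obtaining an algebraic family $\{C_{\underline{s}}\}$ of rational curves over an affine base $B$ with $C_{\underline{0}}=\Mzn^{G}$. Flatness over the connected base $B$ gives $[C_{\underline{s}}]=[\Mzn^{G}]$ for all $\underline{s}$; a direct coordinate computation shows that the general $C_{\underline{s}}$ is irreducible (the classifying map from $\PP^{1}$ is birational onto its image) and, crucially, that the total space of the family dominates $\Mzn$ — the extra parameters are chosen so that a general configuration in $\Mzn$ occurs in some $C_{\underline{s}}$. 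Hence $\Mzn^{G}$ is movable.

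The main obstacle I expect is in (3): choosing the correct set of F-curves and proving the combinatorial identity $\bigl(\sum_{j}F_{j}\bigr)\cdot\delta_{I}=\Mzn^{G}\cdot\delta_{I}$ uniformly in $n$ and in the cycle type of $\sigma$ — equivalently, making the cyclic-covering factorization of $\ovop{M}_{0,4}\to\Mzn$ through boundary strata completely explicit. Parts (1) and (2) are essentially formal, and in (4) the only delicate points are the flatness of the deformation and the dominance of its total space.
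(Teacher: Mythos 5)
Parts (1) and (2) of your proposal are essentially sound. For (2) you replace the paper's argument (a regular birational morphism $\Mzn^{G}\to\ovop{M}_{0,S}\cong\PP^{1}$ for a suitable $4$-element set $S$ of orbit representatives and fixed points, plus the fact that a birational morphism from a complete curve onto a smooth complete curve is an isomorphism) by smoothness of fixed loci of finite groups on smooth varieties; that is a legitimate alternative. The genuine gap is in (3). You reduce correctly to computing the intersection numbers $C^{\sigma}\cdot D_{I}$, but then say you will ``write down the candidate combination \ldots and check'' the identity $\bigl(\sum_{j}F_{j}\bigr)\cdot D_{I}=C^{\sigma}\cdot D_{I}$ for all $I$, and you yourself flag this as the main obstacle. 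That identity \emph{is} the content of the theorem, and you supply no mechanism for choosing the F-curves or verifying it uniformly in $n$ and in the cycle type. The paper's missing idea is the nonadjacent basis: after relabelling so that $\sigma=(1,\dots,r)(r+1,\dots,2r)$, every boundary divisor $D_{I}$ with $C^{\sigma}\cdot D_{I}>0$ has $I$ with at most two connected components in the cyclic order on $[n]$; the element of $\mathrm{H}_2(\Mzn,\QQ)$ dual to a nonadjacent $D_{I}$ with $t(I)=2$ is a \emph{single} F-curve $F_{I_{1},I_{2},J_{1},J_{2}}$ (Proposition \ref{prop:dualFcurve}), so expanding $C^{\sigma}$ in the dual basis immediately gives $C^{\sigma}\equiv\sum (C^{\sigma}\cdot D_{I})\cdot F_{I_1,I_2,J_1,J_2}$ over nonadjacent $I$, with all coefficients equal to $1$ because the only intersection numbers exceeding $1$ occur for connected $I$, which do not lie in the basis. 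Your proposed ``cleaner variant'' via the quotient $\PP^{1}\to\PP^{1}/G$ does not help here: the resulting map $\ovop{M}_{0,4}\to\Mzn$ is just the normalization of $C^{\sigma}$ itself, and its pushforward is exactly the class you are trying to decompose.

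Part (4) is the right idea (exhibit $C^{\sigma}$ as a member of a covering algebraic family) but the construction is too vague at precisely the points you flag: perturbing all marked points ``by independent free parameters'' does not by itself give a family of $1$-cycles of constant class containing $C^{\sigma}$ as an honest member, nor is dominance of the total space clear. The paper resolves both at once: for $j=2$, the map $\pi:\Mzn\to\ovop{M}_{0,[n]-\sigma_{1}}\times\ovop{M}_{0,[n]-\sigma_{2}}$ is surjective with one-dimensional fibers, and $C^{\sigma}$ is shown to equal the (irreducible) fiber $\pi^{-1}(p,q)$ over a specific point, so it moves in the family of fibers; for $j<2$ one lifts to $\ovop{M}_{0,n+2-j}$ via Lemma \ref{lem:Crj-1andCrj} and pushes the covering family forward. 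You would need to either adopt this fibration or make your deformation an explicit family over an irreducible base in the Chow variety whose total space visibly dominates $\Mzn$.
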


\subsection{Dihedral group case}
If the group $G$ is not cyclic, then in general the
intersection number with the canonical divisor is positive, so it 
is possible that $\Mzn^{G}$ is rigid (Section
\ref{sec:dihedral}). Thus by using this idea, we might find a new
extremal ray of $\ovop{NE}_{1}(\Mzn)$. But we checked all such curves
for $n \leq 12$, and none of them is a counterexample to the
F-conjecture.  In the paper we include two concrete examples
on $\ovop{M}_{0,9}$ and $\ovop{M}_{0,12}$.  These are introduced in
Examples \ref{ex:dihedralcurve} and \ref{ex:dihedraln=12} and
completed in sections 
\ref{sec:efflincomb} and \ref{sec:example}.

\subsection{The M0nbar package for Macaulay2}
We have written a great deal of code in \texttt{Macaulay2} (\cite{M2}) over a period of
many years.  For this project, we collected our work in a package
called \texttt{M0nbar} for \texttt{Macaulay2}.  The package code is
available at the second author's website:
\begin{center}
http://faculty.fordham.edu/dswinarski/M0nbar/
\end{center}
This package was used to check many of the calculations in Sections
\ref{sec:efflincomb} and \ref{sec:example}.  We have posted code
samples for these calculations and some others on the website:  
\begin{center}
http://faculty.fordham.edu/dswinarski/invariant-curves/
\end{center}

%%\medskip
\subsection{Structure of the paper}

Here is an outline of this paper. In Section \ref{sec:generality}, we
introduce several definitions we will use in this paper. In Section
\ref{sec:cyclic}, we prove several geometric and numerical properties
for invariant curves for cyclic groups. In Section
\ref{sec:efflincombcyclic}, we prove Theorem
\ref{thm:mainthmcyclic}. The dihedral group case is explained in
Section \ref{sec:dihedral}. The main method of this paper, using
Losev-Manin spaces, is described in Section \ref{sec:LosevManin}. In
Section \ref{sec:efflincomb}, we give a computational strategy to find
an effective $\ZZ$-linear combination from a non-effective
combination. In Section \ref{sec:example}, we compute
an example on $\ovop{M}_{0,12}$.

\subsection{Acknowledgements}  We would like to thank Angela Gibney
for teaching us about the nonadjacent basis.

\medskip

\section{Loci in $\Mzn$ fixed by a finite group}\label{sec:generality}

We work over the complex numbers $\CC$ throughout the paper. 

Consider the natural $S_{n}$-action on $\Mzn$ permuting the $n$ marked points. 

\begin{definition}
Fix a subgroup $G \le S_{n}$ and let $\Mzn^{G}$ be the union of those 
irreducible components of the $G$-fixed locus for the induced
$G$-action on $\Mzn$ that intersect the interior $\mathrm{M}_{0,n}$. Then an irreducible component of $\Mzn^{G}$ is a subvariety of $\Mzn$.
\end{definition}

\begin{remark}
One motivation for studying these loci is the following.   There are
several different possible descriptions of Keel-Vermeire divisors
(\cite{Ver02}). One of them is using $\Mzn^{G}$: a Keel-Vermeire divisor is the case that $G$ is a cyclic group of order two generated by $(12)(34)(56)$. 
\end{remark}

\begin{remark}
In general, there are several irreducible components of the $G$-fixed
loci which are contained in the boundary. For example, see Example
\ref{ex:ontheboundary}.  That is why in the definition we choose only
those components that meet the interior.
\end{remark}

Let $n \ge 3$. For a subgroup $G \subset S_{n}$, consider $(\PP^{1}, x_{1}, \cdots, x_{n}) \in \Mzn^{G}$. Then for each $\sigma \in G$, there is $\phi_{\sigma} \in \mathrm{Aut}(\PP^{1}) = \mathrm{PGL}_{2}$ such that $\phi_{\sigma}(x_{i}) = x_{\sigma(i)}$. It defines a group representation $\phi: G \to \mathrm{PGL}_{2}$. This representation is faithful, because if $\phi_{\sigma} = \mathrm{id} \in \mathrm{PGL}_{2}$, $x_{i} = \phi_{\sigma}(x_{i}) = x_{\sigma(i)}$ thus $\sigma = \mathrm{id} \in S_{n}$. We can conclude that $\Mzn^{G}$ is nonempty only if there exists a faithful representation $\phi : G \to \mathrm{PGL}_{2}$. 

A finite subgroup of $\mathrm{PGL}_{2}$ is one of following.
\begin{itemize}
\item A finite cyclic group $C_{k}$. 
\item A dihedral group $D_{k}$. 
\item $A_{4}, S_{4}, A_{5}$. 
\end{itemize}

Moreover, any $\psi \in \mathrm{PGL}_{2}$ with finite order $r$ is, up
to conjugation, a rotation along a pivotal axis on $\PP^{1} \cong
S^{2}$ by the angle $\frac{2\pi}{r}$. Therefore it has two fixed points,
and except them, all other orbits have length $r$. Thus, to obtain a faithful representation, for all $\sigma \in G - \{e\}$, the number of elements in $\mathrm{Stab}_{\sigma} := \{ i \in [n] | \sigma(i) = i\}$ must be at most two. 

These restrictions already give all possible $\sigma \in S_{n}$ with nonempty fixed locus $\Mzn^{\langle \sigma \rangle}$.

\begin{lemma}\label{lem:nonempty}
Let $\sigma \in S_{n}$ and let $\sigma = \sigma_{1}\sigma_{2}\cdots \sigma_{k}$ where the right hand side is a product of disjoint nontrivial cycles. Suppose that $\Mzn^{\langle \sigma \rangle}$ is nonempty. If we denote the length of $\sigma_{i}$ by $\ell_{i}$, then $\ell_{1} = \ell_{2} = \cdots = \ell_{k}$ and $n - \sum_{i=1}^{k}\ell_{i} \le 2$. 
\end{lemma}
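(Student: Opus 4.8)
The plan is to use the faithful representation $\phi : \langle\sigma\rangle \to \mathrm{PGL}_{2}$ attached to any interior point of $\Mzn^{\langle\sigma\rangle}$, together with the rotation normal form for finite-order elements of $\mathrm{PGL}_{2}$ recalled above. We may assume $\sigma \ne e$, since otherwise the statement is vacuous. Since $\Mzn^{\langle\sigma\rangle}$ has, by definition, a component meeting the interior, fix $(\PP^{1}, x_{1}, \dots, x_{n}) \in \Mzn^{\langle\sigma\rangle}$ with the $x_{i}$ pairwise distinct, and set $\psi := \phi_{\sigma} \in \mathrm{PGL}_{2}$, so that $\psi(x_{i}) = x_{\sigma(i)}$ for all $i$. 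As $\phi$ is faithful, $\psi$ has order $r := \mathrm{lcm}(\ell_{1}, \dots, \ell_{k})$, the order of $\sigma$.

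The crux is the orbit structure of the cyclic group $\langle\psi\rangle$ on $\PP^{1}$. Up to conjugation $\psi$ is the rotation of $\PP^{1} \cong S^{2}$ by $2\pi/r$ about an axis meeting the sphere in two poles $N$ and $S$; every non-identity power of $\psi$ is again a nontrivial rotation about the same axis, and therefore fixes $N$ and $S$ and nothing else. Hence the stabilizer in $\langle\psi\rangle$ of any point distinct from $N, S$ is trivial, so $\langle\psi\rangle$ has exactly two orbits of size $1$ on $\PP^{1}$, namely $\{N\}$ and $\{S\}$, while every other orbit has size exactly $r$.

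Now I read off the two conclusions. For each nontrivial cycle $\sigma_{i}$, the set $\{\, x_{j} : j \text{ occurs in } \sigma_{i}\,\} \subset \PP^{1}$ consists of $\ell_{i}$ distinct points, and it is a single $\langle\psi\rangle$-orbit since $\psi$ cyclically permutes these points. Because $\ell_{i} \ge 2$, this orbit is neither $\{N\}$ nor $\{S\}$, so by the previous paragraph it has size $r$; thus $\ell_{i} = r$ for every $i$, which gives $\ell_{1} = \dots = \ell_{k}$. Finally, each of the $n - \sum_{i=1}^{k} \ell_{i}$ indices fixed by $\sigma$ produces a point $x_{j}$ fixed by $\psi$, hence lying in $\{N, S\}$; as the $x_{j}$ are pairwise distinct, there are at most two such indices, i.e.\ $n - \sum_{i=1}^{k}\ell_{i} \le 2$.

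I do not anticipate a real obstacle: the whole argument rests on the orbit-size dichotomy for $\langle\psi\rangle$ acting on $\PP^{1}$, and the only delicate point is to verify that \emph{every} non-identity power of $\psi$, not merely $\psi$ itself, has just two fixed points, so that non-polar orbits have full length $r$ rather than a proper divisor of $r$. (The inequality $n - \sum \ell_{i} \le 2$ is in fact already implicit in the discussion preceding the lemma, being the bound $|\mathrm{Stab}_{\sigma}| \le 2$ applied to $\sigma$ itself; the genuinely new content is the equality of the cycle lengths.)
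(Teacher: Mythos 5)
Your proof is correct and follows essentially the same route as the paper, which derives the lemma directly from the observation that a finite-order element of $\mathrm{PGL}_{2}$ is conjugate to a rotation with exactly two fixed points and all other orbits of length equal to its order. The paper states this as discussion preceding the lemma rather than as a formal proof, but the content is identical to yours, including the orbit-size dichotomy and the bound $|\mathrm{Stab}_{\sigma}| \le 2$.
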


Conversely, for a $\sigma \in S_{n}$ satisfying the conditions in Lemma \ref{lem:nonempty}, it is easy to find $(\PP^{1}, x_{1}, \cdots, x_{n}) \in \Mzn^{\langle \sigma\rangle}$. 

\begin{definition}\label{def:balanced}
A permutation $\sigma \in S_{n}$ is called \emph{balanced} if we can
write $\sigma$ as a product of disjoint nontrivial cycles
$\sigma_{1}\sigma_{2} \cdots \sigma_{k}$ such that 
\begin{enumerate}
\item the length of all $\sigma_{i}$'s are equal to a fixed $\ell$;
\item $n - k\ell \le 2$.
\end{enumerate}
\end{definition}

\section{Cyclic group cases}\label{sec:cyclic}

In this section, we will consider cyclic group cases. 

\begin{lemma}\label{lem:irreducible}
Let $G = \langle \sigma \rangle$ be a cyclic group of order $r$ such that $\sigma$ is balanced. Let $j$ be the number of trivial (length 1) cycles in $\sigma$. Then:
\begin{enumerate}
\item The dimension of $\Mzn^{G}$ is $\frac{n-j}{r} -1$. 
\item $\Mzn^{G}$ is irreducible.
\end{enumerate}
\end{lemma}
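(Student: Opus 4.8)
The plan is to parametrize points of $\Mzn^G$ explicitly, exploiting the faithful representation $\phi : G \to \mathrm{PGL}_2$ extracted in Section \ref{sec:generality}. Since $G$ is cyclic of order $r$ and $\sigma$ is balanced, any point $(\PP^1, x_1, \ldots, x_n) \in \Mzn^G$ with $\PP^1$ irreducible produces an order-$r$ automorphism $\psi = \phi_\sigma$, which up to conjugation we may take to be $z \mapsto \zeta z$ for a primitive $r$-th root of unity $\zeta$. The two fixed points of $\psi$ on $\PP^1$ are $0$ and $\infty$; since $\mathrm{Stab}_\sigma$ has size $j \le 2$, the marked points lying at the fixed locus are exactly the (at most two) fixed marked points, and the remaining $n - j$ marked points break into $(n-j)/r$ free $\langle\psi\rangle$-orbits of size $r$. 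First I would show that choosing the position of one representative in each free orbit (a point of $\CC^* = \PP^1 \setminus \{0,\infty\}$), modulo the residual symmetries — the $\CC^*$-action rescaling $z$ (which commutes with $\psi$) together with the swap $z \mapsto 1/z$ when $j < 2$ forces both fixed points to be ``unused,'' and modulo re-indexing within each orbit — exhibits an open dense subset of $\Mzn^G \cap \mathrm{M}_{0,n}$ as a quotient of an open subset of $(\CC^*)^{(n-j)/r}$ by a connected group (a torus, possibly extended), hence irreducible of dimension $(n-j)/r - 1$. This handles part (1) on the interior and gives irreducibility of the interior part of the fixed locus.

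The subtlety is that $\Mzn^G$ by definition is the closure (union of components meeting $\mathrm{M}_{0,n}$) of this interior locus, so once the interior piece is shown to be irreducible of the claimed dimension, part (2) and the dimension count in part (1) follow immediately: the closure of an irreducible set is irreducible and has the same dimension. Thus the real content is the interior analysis above. I would be slightly careful about the case distinctions on $j$: when $j = 2$ both $0$ and $\infty$ carry a marked point and there is no extra $z \mapsto 1/z$ symmetry but also no constraint that those points be unmarked; when $j = 1$ one of $0,\infty$ is marked and the other is not, which again kills the swap; when $j = 0$ neither fixed point is marked, and the involution $z \mapsto 1/z$ (which normalizes $\langle\psi\rangle$, inverting $\psi$) does act, but since $G$ is cyclic and we only quotient by the centralizer-type symmetries that actually arise from relabeling, one checks this does not drop the dimension — it is a finite quotient. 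In every case the parameter space is an open subset of a torus of dimension $(n-j)/r - 1$ (after removing the rescaling $\CC^*$), modulo a finite group acting on orbit-representatives, so irreducibility is preserved.

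The main obstacle I anticipate is bookkeeping rather than conceptual: making rigorous the claim that the assignment ``configuration $\mapsto$ tuple of free-orbit representatives in $\CC^*$, up to the evident symmetries'' is an isomorphism onto an open subset of the stated quotient, i.e. that every such tuple really does come from a stable $n$-pointed configuration (automatic, since distinct marked points in $\CC^*$ with $n \ge 3$ give a stable smooth curve) and that two tuples give the same point of $\Mzn$ exactly when they differ by the residual group. One must also confirm that the fixed locus contains no higher-dimensional component through the interior coming from configurations with a different conjugacy class of $\psi$ — but since all primitive choices are conjugate in $\mathrm{PGL}_2$ and the action of $\sigma$ pins down which power of $\psi$ it induces on each orbit, this is forced. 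I do not expect to need the toric-degeneration machinery here; a direct argument as above suffices, and the technical tools are reserved for later sections.
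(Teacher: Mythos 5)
Your argument is the paper's own argument written in normalized coordinates: the paper parametrizes by one point per free orbit together with the two $\sigma$-fixed points and quotients by $\mathrm{PGL}_2$, while you fix the two fixed points at $0,\infty$ and quotient by the residual torus --- the same count --- and your reduction of (2) to irreducibility of the interior locus is fine. But there is a genuine gap at exactly the step you flagged and then waved away: it is \emph{not} true that all primitive $r$-th roots of unity give conjugate rotations in $\mathrm{PGL}_{2}$. Any element conjugating $z\mapsto\zeta^{k}z$ to $z\mapsto\zeta^{k'}z$ must preserve the fixed-point pair $\{0,\infty\}$, hence is $z\mapsto az$ or $z\mapsto a/z$, so conjugacy holds only for $k'\equiv\pm k\pmod r$. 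Since $\phi_{\sigma}$ sends $x_{a}$ to $x_{\sigma(a)}$, choosing a representative $z_{0}$ for a cycle forces the remaining labeled points of that cycle to sit at $\zeta^{k}z_{0},\zeta^{2k}z_{0},\dots$, and distinct classes $\pm k\in(\ZZ/r)^{*}/\{\pm1\}$ give non-isomorphic labeled configurations. So your parameter space is a disjoint union of $|(\ZZ/r)^{*}/\{\pm1\}|$ tori, one for each class, and irreducibility fails whenever this number exceeds one, i.e.\ whenever $r=5$ or $r\ge 7$.

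Concretely, for $\sigma=(1\,2\,3\,4\,5)(6\,7\,8\,9\,10)\in S_{10}$ and $\zeta=e^{2\pi i/5}$, the two one-parameter families $(1,\zeta,\zeta^{2},\zeta^{3},\zeta^{4},t,t\zeta,\dots,t\zeta^{4})$ and $(1,\zeta^{2},\zeta^{4},\zeta,\zeta^{3},t,t\zeta^{2},\dots,t\zeta^{3})$ both lie in the $\langle\sigma\rangle$-fixed locus and meet the interior, but the cross-ratio of $(x_{1},x_{2},x_{3},x_{4})$ is the constant $\frac{1+\sqrt5}{2}$ on the first family and $\frac{1-\sqrt5}{2}$ on the second, so they are distinct irreducible components of the same dimension. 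To be fair, the paper's own proof contains the same gap --- ``each orbit is determined by a choice of a point of $\PP^{1}$'' determines the orbit as a set but not the labeling within it, so the claimed dominant rational map from $(\PP^{1})^{(n-j)/r+2}-\Delta$ only hits one component --- so part (2) as stated needs either the extra hypothesis $\phi(r)\le 2$ (i.e.\ $r\in\{2,3,4,6\}$, which covers every example worked out in the paper) or a reinterpretation of $\Mzn^{G}$ as a single component. Part (1) is unaffected: your dimension count applies verbatim to each component.
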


\begin{proof}
Note that $n-j$ marked points of order $r$ can be decomposed into $\frac{n-j}{r}$ orbits and each orbit is determined by a choice of a point of $\PP^{1}$. Thus a point of $\Mzn^{G}$ is determined by isomorphism classes of $\frac{n-j}{r} + 2$ distinct points on $\PP^{1}$, where the last two points are $\sigma$-fixed points. Hence the dimension of $\Mzn^{G}$ is $\frac{n-j}{r}+2-3 = \frac{n-j}{r}-1$. This proves (1). 

Moreover, by the above description, there is a dominant rational map $(\PP^{1})^{\frac{n-j}{r}+2} - \Delta \dashrightarrow \Mzn^{G}$. Therefore $\Mzn^{G}$ is irreducible.
\end{proof}

\begin{example}
\begin{enumerate}
\item For $n = 6$, there are three types of positive-dimensional
  subvarieties. One is codimension one, which is the case $j = 0$ and
  $r = 2$. So the group $G$ is generated by $
  (12)(34)(56)$ or one of its $S_{6}$ conjugates. Thus in this
  case $\Mzn^{G}$ is a Keel-Vermeire divisor. If $j = 2$ and $r = 2$,
  $G$ is generated by $ (12)(34)$ or one of its
  $S_{6}$-conjugates. Finally, if $j = 0$ and $r = 3$, $G $ is
  generated by $ (123)(456)$ or one of its $S_{6}$-conjugates.
\item When $n = 7$, there are two positive dimensional subvarieties. If $j = 1$ and $r = 2$, $\Mzn^{G}$ is two-dimensional. If $j = 1$ and $r = 3$, it is a curve. 
\end{enumerate}
\end{example}

\begin{remark}
A simple consequence is that $\Mzn^{G}$ has codimension at least two
if $n \ge 7$. Indeed, $\frac{n-j}{r} - 1 = n - 4$ has integer
solutions with $0 \le j \le 2$ and $r \ge 2$ only if $n \le
6$. Similarly, $\Mzn^{G}$ has codimension two only if $n \le 8$. So in
general, we are only able to obtain subvarieties with large codimension.
\end{remark}

\begin{lemma}\label{lem:isomorphictoP1}
Let $G =\langle \sigma \rangle$ be a cyclic group where $\sigma$ is a
balanced permutation. If the dimension of $\Mzn^{G}$ is one, then $\Mzn^{G}
\cong \PP^{1}$.
\end{lemma}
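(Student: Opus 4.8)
The plan is to leverage Lemma~\ref{lem:irreducible}: when $\dim \Mzn^{G} = 1$, the formula $\frac{n-j}{r} - 1 = 1$ forces $\frac{n-j}{r} = 2$, so the curve admits a dominant rational map from $(\PP^{1})^{4} - \Delta$, where the two moving factors correspond to the centers of the two length-$r$ orbits and the two fixed factors are the pair of $\sigma$-fixed points on $\PP^{1}$. First I would normalize: using the residual $\mathrm{PGL}_{2}$-action one can send the two $\sigma$-fixed points to $0$ and $\infty$, so that $\sigma$ acts on $\PP^{1}$ as $z \mapsto \zeta z$ for a primitive $r$-th root of unity $\zeta$. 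A point of $\Mzn^{G}$ is then determined by the two orbit-centers $a, b \in \PP^{1} \setminus \{0,\infty\}$, i.e. by $a, b \in \CC^{*}$, modulo the remaining stabilizer of $\{0,\infty\}$ inside $\mathrm{PGL}_{2}$ that also commutes with (or normalizes) the $\sigma$-action. That residual group is generated by the scalings $z \mapsto \lambda z$ and the swap $z \mapsto 1/z$.

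Next I would compute the quotient explicitly. The scalings act on $(a,b) \in (\CC^{*})^{2}$ diagonally, so $(\CC^{*})^{2} / (\CC^{*}) \cong \CC^{*}$ via $(a,b) \mapsto a/b$; call this coordinate $t$. Then the swap $z \mapsto 1/z$ sends $(a,b)$ to $(1/a, 1/b)$, hence $t \mapsto 1/t$, an involution on $\CC^{*}$ with quotient $\mathbb{A}^{1}$ (via $t + t^{-1}$). So the image of the interior $\mathrm{M}_{0,n}^{G}$ is an open subset of $\mathbb{A}^{1}$, hence a smooth rational affine curve; its closure in the proper variety $\Mzn^{G}$ is therefore a smooth projective rational curve, i.e. $\PP^{1}$. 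One subtlety to address: whether the full normalizer of $\langle\sigma\rangle$ fixing $\{0,\infty\}$ is exactly $\langle\text{scalings}, \text{swap}\rangle$ — the swap conjugates $\sigma$ to $\sigma^{-1}$, which is fine since $\langle\sigma\rangle = \langle\sigma^{-1}\rangle$ — and one should double-check there is no larger group when $r \le 2$; but in those small cases the claim can be verified directly. I should also confirm that $\Mzn^{G}$ is actually the \emph{closure} of its interior part, which holds because by definition every component of $\Mzn^{G}$ meets $\mathrm{M}_{0,n}$ and, being irreducible of dimension one (Lemma~\ref{lem:irreducible}), it is the closure of that one-dimensional interior piece.

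Alternatively, and perhaps more cleanly, one can argue: $\Mzn^{G}$ is irreducible, proper, and one-dimensional, and it is dominated by the rational variety $(\PP^{1})^{4} - \Delta$, hence $\Mzn^{G}$ is a proper rational curve; to upgrade ``rational'' to ``$\cong \PP^{1}$'' it suffices to show $\Mzn^{G}$ is smooth, or equivalently normal. For this I would observe that $\Mzn^{G}$ is a fixed locus of a finite-order automorphism of the smooth variety $\Mzn$, and fixed loci of finite group actions on smooth varieties (over $\CC$) are themselves smooth; therefore each component of $\Mzn^{G}$ is smooth, and a smooth proper rational curve is $\PP^{1}$.

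The main obstacle I anticipate is the bookkeeping in the second approach: identifying the precise residual automorphism group acting on the parameter space of orbit-centers and checking that the quotient is exactly $\mathbb{A}^{1}$ (no extra identifications, no orbifold points that would obstruct $\PP^{1}$), especially reconciling the two fixed points of $\sigma$ possibly being swapped by elements of $G$ itself when $\sigma$ has even order. The smoothness-of-fixed-loci argument sidesteps this entirely, so I would present that as the primary proof and relegate the explicit coordinate computation to a remark illustrating the geometry.
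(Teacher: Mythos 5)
Your primary argument (smoothness of fixed loci) is correct, but it takes a genuinely different route from the paper. The paper picks a four-element set $S$ consisting of one marked point from each cycle together with the non-marked $\sigma$-fixed points, obtains a regular birational morphism $\Mzn^{G}\to \ovop{M}_{0,S}\cong \PP^{1}$, and invokes the fact that a birational morphism from a complete curve to a \emph{nonsingular} complete curve is an isomorphism --- so it never needs to know in advance that $\Mzn^{G}$ is smooth. You instead establish smoothness directly from the general fact that the fixed locus of a finite group acting on a smooth complex variety is smooth, and then combine irreducibility, properness, and rationality (via L\"uroth, since $\Mzn^{G}$ is dominated by $(\PP^{1})^{4}$) to conclude $\Mzn^{G}\cong\PP^{1}$. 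Your route is more conceptual and more portable: it applies verbatim to the dihedral (and other non-cyclic) cases, where the paper only gestures at ``an argument similar to that of Lemma \ref{lem:isomorphictoP1}.'' The paper's route is shorter and more explicit, and the auxiliary map $\pi$ it constructs is reused implicitly elsewhere. One caution about your first, coordinate-based sub-argument: the inference ``the interior is an open subset of $\mathbb{A}^{1}$, hence its closure is smooth'' is not valid on its own --- an irreducible complete curve whose complement of finitely many points is isomorphic to an open subset of $\mathbb{A}^{1}$ can still be singular at those boundary points (a nodal plane cubic minus its node is isomorphic to $\CC^{*}$). Since you explicitly demote that computation to an illustrative remark and rest the proof on the fixed-locus smoothness argument, the proposal as a whole stands.
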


\begin{proof}
Since the $n=4$ case is obvious, suppose that $n \ge 5$. Pick one element from each cycle, and let $S$ be the set of them. Also, if there are non-marked $\sigma$-fixed points, then enlarge $S$ to include these $\sigma$-fixed points, too. When $n \ge 5$, it is easy to see that $|S| = 4$. Then there is a morphism $\pi : \Mzn^{G} \to \ovop{M}_{0, S}\cong \PP^{1}$. Then $\pi$ is a regular birational morphism. A birational morphism from a complete curve to a nonsingular complete curve is an isomorphism (see for instance \cite[Proposition III.9.1]{Mum99}).
\end{proof}

\begin{remark}
In general, $\Mzn^{G}$ is a rational variety because there is a
birational map $(\PP^{1})^{k} \dashrightarrow \Mzn^{G}$. It would be
interesting if one can describe the geometry of $\Mzn^{G}$ in terms of
concrete blow-ups and blow-downs of $(\PP^{1})^{k}$.
\end{remark}

\begin{definition}\label{def:Csigma}
For $G = \langle \sigma \rangle$ with a balanced $\sigma$, if $\dim \Mzn^{G} = 1$, we will denote $\Mzn^{G}$ by $C^{\sigma}$.
\end{definition}

In this case, there are exactly two nontrivial disjoint cycles of length $r$. Let
$j$, $r$ be two integers satisfying $0 \le j \le 2$, $r > 1$, $\frac{n-j}{r} =
2$. So $j$ refers the number of fixed marked points, and $r$ is the
length of a general orbit on $\PP^{1}$. We will say that $\sigma$ (or $G$) is \emph{of type $(j, r)$}. 

For any $n \ge 5$, there is a cyclic group $G$ such that $\dim \Mzn^{G} = 1$. Indeed, by taking an appropriate $0 \le j \le 2$, $n-j$ can be an even number $2r$ with $r > 1$. Thus $\frac{n-j}{r} - 1 = 1$ has an integer solution and we are able to find $G$. 

In the next lemma, we show that if $j > 0$, then $C^{\sigma}$ comes from a curve on $\ovop{M}_{0,n-1}$.

\begin{lemma}\label{lem:Crj-1andCrj}
Suppose that for a balanced $\sigma$ with $G = \langle \sigma
\rangle$, $\sigma(i) = i$. Let $G' = \langle \sigma'\rangle$ be the
cyclic subgroup of $S_{n-1}$ generated by
$\sigma':=\sigma|_{[n]-\{i\}}$. Consider $C^{\sigma'} \subset
\ovop{M}_{0,[n]-\{i\}} \cong \ovop{M}_{0,n-1}$. For the forgetful map $\pi: \Mzn \to \ovop{M}_{0,n-1}$, $\pi|_{C^{\sigma}} : C^{\sigma} \to C^{\sigma'}$ is an isomorphism.
\end{lemma}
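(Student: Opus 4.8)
The plan is to exhibit an explicit inverse to $\pi|_{C^\sigma}$, using the moduli-theoretic descriptions of both curves as configurations of points on $\PP^1$ modulo projective equivalence. Recall from the proof of Lemma \ref{lem:irreducible} that a general point of $C^\sigma$ is recorded by the two $\sigma$-fixed points of $\PP^1$ together with the $\frac{n-j}{r}=2$ orbit representatives, i.e.\ by four distinct points on $\PP^1$; and since $\sigma(i)=i$, the marked point $x_i$ must sit at one of the two $\sigma$-fixed points. Removing $x_i$ produces a configuration that is exactly a point of $C^{\sigma'}\subset \ovop{M}_{0,[n]-\{i\}}$: the permutation $\sigma'$ has the same cycle type as $\sigma$ except that one length-$1$ cycle has been deleted, so it is still balanced with the same $r$ and two nontrivial $r$-cycles, and $\frac{(n-1)-(j-1)}{r}=2$, so $\dim C^{\sigma'}=1$ as well. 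Thus $\pi$ does map $C^\sigma$ into $C^{\sigma'}$, and on the interior it is a bijection because the data ``two $\sigma$-fixed points plus two orbit representatives'' is unchanged by deleting the marked point lying at one of the fixed points.

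First I would check that $\pi(C^\sigma)\subseteq C^{\sigma'}$ as subvarieties of $\ovop{M}_{0,n-1}$, not merely set-theoretically on the interior. Since $\pi$ is a morphism and $C^\sigma$ is irreducible (Lemma \ref{lem:irreducible}), $\pi(C^\sigma)$ is an irreducible closed subvariety; it meets the interior $\mathrm M_{0,n-1}$ in the image of the $\sigma$-invariant interior configurations, which is contained in $\mathrm M_{0,n-1}^{G'}$, hence in $C^{\sigma'}$. As both $C^\sigma$ and $C^{\sigma'}$ are complete irreducible curves and $\pi|_{C^\sigma}$ is nonconstant (it is injective on a dense open set), $\pi$ restricts to a finite surjective morphism $C^\sigma\to C^{\sigma'}$. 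By Lemma \ref{lem:isomorphictoP1}, both source and target are isomorphic to $\PP^1$.

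Next I would promote this to an isomorphism. Two routes are available. The clean one: construct the inverse directly as a morphism. Given $(\PP^1,(x_t)_{t\in[n]-\{i\}})\in C^{\sigma'}$, the point $x_i$ is forced to be the $\sigma$-fixed point of $\PP^1$ that is \emph{not} already occupied by a marked point (the two fixed points are the limiting positions of the orbit-collision degenerations, hence canonically determined by the configuration; one is occupied when $j-1\ge 1$, and one handles the $j-1=0$ subcase by the same stability/rigidity argument used in Lemma \ref{lem:isomorphictoP1} via a common four-point model). Reinserting $x_i$ there defines a morphism $C^{\sigma'}\to C^\sigma$ inverse to $\pi|_{C^\sigma}$ on the interior, hence everywhere by density and separatedness. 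The lazy route, which I would actually use to keep the argument short: $\pi|_{C^\sigma}:C^\sigma\to C^{\sigma'}$ is a birational morphism between complete curves, with $C^{\sigma'}\cong\PP^1$ nonsingular, so it is an isomorphism by \cite[Proposition III.9.1]{Mum99} — exactly as in the proof of Lemma \ref{lem:isomorphictoP1}. Birationality is the content of the bijection on interiors established above together with the fact that $\pi|_{C^\sigma}$ is a dominant morphism of irreducible curves.

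The only genuine obstacle is the bookkeeping when $j=1$, so that after deleting $x_i$ the permutation $\sigma'$ has $j-1=0$ fixed marked points: one must verify that the two $\sigma'$-fixed points of $\PP^1$ are still intrinsic to a point of $C^{\sigma'}$ (so that the reinsertion is canonical), even though neither is now marked. This is handled exactly as the $|S|=4$ normalization in the proof of Lemma \ref{lem:isomorphictoP1}: the non-marked $\sigma'$-fixed points get absorbed into the distinguished four-point set $S$ cutting out $\ovop{M}_{0,S}\cong\PP^1$, so they vary algebraically with the moduli point and the reinsertion map is a morphism. Everything else is immediate from the orbit-counting description of $C^\sigma$ and $C^{\sigma'}$.
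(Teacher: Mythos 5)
Your overall route --- show $\pi(C^{\sigma})\subseteq C^{\sigma'}$, establish that $\pi|_{C^{\sigma}}$ is birational, and conclude via the fact that a birational morphism onto a nonsingular complete curve is an isomorphism --- is the same as the paper's, which simply asserts that birationality is ``straightforward to check.'' The $j=2$ half of your argument is fine: there $x_i$ must sit at the unique $\sigma$-fixed point of $\PP^{1}$ not already occupied by the other fixed marked point, so the reinsertion map is well defined and inverts $\pi$ on the interior. The gap is exactly where you flag it, namely $j=1$ (so $\sigma'$ has no fixed marked points), and your proposed fix does not close it. The issue is not whether the two $\sigma'$-fixed points ``vary algebraically with the moduli point''; it is that there are \emph{two} of them and no canonical choice of which one receives $x_i$. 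Both choices yield $\sigma$-invariant configurations lying in $\Mzn^{G}$, and they are distinct points of $\Mzn$: an isomorphism between them would have to fix the remaining $n-1\ge 4$ marked points, hence be the identity, yet move $x_i$. So the fibre of $\pi|_{C^{\sigma}}$ over a general point of $C^{\sigma'}$ contains two points, and injectivity on the interior --- the heart of your birationality claim --- fails.

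Concretely, take $n=5$, $\sigma=(12)(34)$, $i=5$. Here $C^{\sigma}$ is irreducible, with general point $(\PP^{1};z,-z,1,-1,x_5)$, $x_5\in\{0,\infty\}$; the two sub-families form one curve because $u\mapsto 1/u$ commutes with $u\mapsto -u$ and carries $(z,-z,1,-1,\infty)$ to $(1/z,-1/z,1,-1,0)$. Meanwhile $C^{\sigma'}=\ovop{M}_{0,4}$. The points $(z,-z,1,-1,0)$ and $(z,-z,1,-1,\infty)$ are distinct in $\ovop{M}_{0,5}$ but have the same image under $\pi$; equivalently, by Proposition \ref{prop:intersectionboundary}, $C^{\sigma}\cdot\pi^{*}(\mathrm{pt})=C^{\sigma}\cdot\bigl(D_{\{1,2\}}+D_{\{1,2,5\}}\bigr)=1+1=2$, so $\pi_{*}[C^{\sigma}]=2[\ovop{M}_{0,4}]$ and $\pi|_{C^{\sigma}}$ has degree $2$, not $1$. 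Thus the statement itself fails when $j=1$, and the paper's one-line proof has the same unaddressed gap. For $r\ge 3$ and $j=1$ the situation is different but no better: the two reinsertion choices are distinguished by the eigenvalue of $d\phi$ at the fixed point carrying $x_i$, a locally constant invariant, so they lie on two distinct components of the $G$-fixed locus, both meeting the interior --- which conflicts with the irreducibility you import from Lemma \ref{lem:irreducible}. A correct write-up should restrict the statement to $j=2$, or else confront the choice of fixed point explicitly rather than appealing to the normalization trick from Lemma \ref{lem:isomorphictoP1}.
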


\begin{proof}
It is straightforward to check that $\pi(C^{\sigma}) = C^{\sigma'}$
and that $\pi$ restricts to a birational map on $C^{\sigma}$. Thus $C^{\sigma} \to C^{\sigma'}$ is an isomorphism.
\end{proof}

\begin{example}\label{ex:ontheboundary}
Let $G = \langle (12)(34)\rangle$ and $n = 6$. Consider $X_{1} =
\PP^{1}$ with 4 marked points $x_{1}$, $x_{2}$, $x_{3}$, and $x_{4}$
such that $G$-invariant. Then there are two fixed points $p, q$ on
$X_{1}$. Let $X_{2} = \PP^{1}$ have three marked points $x_{5}$,
$x_{6}$, $r$. Consider the gluing $X = X_{1} \cup X_{2}$ along $p$ and
$r$. Then this is a $G$-invariant curve, and there are 1-dimensional
moduli $C \subset \ovop{M}_{0,6} - \mathrm{M}_{0,6}$ of these
curves. $C$ is disjoint from $C^{(12)(34)}$, because for every
degenerated curve in $C^{(12)(34)}$, two fixed marked points $x_{5}$
and $x_{6}$ are on the spine (for instance when $x_{1}$ approaches
$x_{3}$ or $x_{4}$) or on distinct tails (for example if $x_{1}$
approaches one of $\sigma$-fixed points on $\PP^{1}$). 

This example shows that, in general, the fixed point locus of $G$ has
extra irreducible components contained in the boundary of $\Mzn$.
\end{example}

Since the classes of boundary divisors span $\mathrm{N}^{1}(\Mzn,\mathbb{Q})$ (\cite[p.550]{Kee92}), to
obtain the numerical class of $C^{\sigma}$, it suffices to know the
intersection numbers of $C^{\sigma}$ with boundary divisors (consult \cite[Section 2]{Moo13} for notations). 

\begin{proposition}\label{prop:intersectionboundary}
Let $\sigma \in S_{n}$ be a balanced permutation of type $(j, r)$ with two nontrivial orbits and let $\sigma_{1}$, $\sigma_{2}$ be two non-trivial disjoint cycles in $\sigma$. Let $F$ be the set of $\sigma$-invariant marked points. 
\begin{enumerate}
\item Let $I = \{h,i\}$ where $h \in \sigma_{1}$ and $i \in \sigma_{2}$. Then $C^{\sigma}\cdot D_{I} = 1$.
\item Suppose that $F = \emptyset$. For $I = \sigma_{\ell}$, $C^{\sigma} \cdot D_{I} = 2$.
\item Suppose that $F = \{a\}$. For $I = \sigma_{\ell}$ or $I = \sigma_{\ell} \cup \{a\}$, $C^{\sigma} \cdot D_{I} = 1$.
\item Suppose that $F = \{a, b\}$. For $I = \sigma_{\ell} \cup \{a\}$, $C^{\sigma}\cdot D_{I} = 1$. 
\item Except for the above cases, all other intersection numbers are zero.
\item If $n \ge 6$, $C^{\sigma} \cdot D_{2} = r^{2}$ and $C^{\sigma} \cdot D_{\lfloor \frac{n}{2}\rfloor} = 2$. All other intersections are zero.
\item If $n = 5$ (so $j = 1$ and $r = 2$), then $C^{\sigma} \cdot D_{2} = 6$. 
\end{enumerate}
\end{proposition}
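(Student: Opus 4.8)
The plan is to compute each intersection number $C^{\sigma}\cdot D_I$ by exhibiting an explicit one-parameter family of $\sigma$-invariant $n$-pointed rational curves parametrizing $C^{\sigma}$, and counting how many members of that family degenerate to a stable curve of boundary type $D_I$, with appropriate multiplicity. By Lemma \ref{lem:isomorphictoP1} we know $C^{\sigma}\cong\PP^1$, and by the proof of Lemma \ref{lem:irreducible} a general point of $C^{\sigma}$ is the image of a configuration of $4$ distinct points on $\PP^1$: the two $\sigma$-fixed points (the ``poles'') $p_0,p_\infty$, one representative $q_1$ of the orbit $\sigma_1$, and one representative $q_2$ of the orbit $\sigma_2$ (when $j<2$; when $j=2$ the two extra marked points $x_a,x_b$ play the role of two of these). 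Normalizing by $\mathrm{PGL}_2$ so that $p_0=0$, $p_\infty=\infty$, and (say) $q_1=1$, the curve $C^{\sigma}$ is swept out by letting $q_2=t$ vary over $\PP^1$, with $\sigma$ acting on the orbit points by multiplication by a primitive $r$-th root of unity $\zeta$. The marked points are then $\{\zeta^k : k\}\cup\{\zeta^k t : k\}$ together with the fixed marked points among $\{0,\infty\}$.

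First I would set up this coordinate description carefully in each of the four cases $|F|=0,1,2$ (items (1)--(5)), and then identify which boundary divisors $D_I$ the family can possibly meet: a degeneration occurs exactly when some marked points collide, i.e. when $t\to 0$, $t\to\infty$, or $t\to\zeta^{-k}$ (so that $\zeta^k t\to 1$, a point of the first orbit), or when $t$ causes two points of distinct orbits to coincide. Because $\sigma$ acts on the special fiber, the resulting boundary divisor must itself be $\sigma$-invariant, which forces $I$ to be a union of orbits (possibly with fixed points adjoined) — this is precisely the list in (1)--(4). For item (1), $I=\{h,i\}$ with $h\in\sigma_1,\ i\in\sigma_2$: one checks there is a single value of $t$ (after normalization) making $x_h$ and $x_i$ collide while no other collision happens, giving intersection number $1$; one must verify transversality, i.e. that the family meets $D_I$ simply, which follows since the local deformation parameter near that fiber is a linear function of $t$. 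For item (2) with $F=\emptyset$ and $I=\sigma_\ell$: the whole orbit $\sigma_\ell$ sprouts off onto a tail, which happens as $t\to 0$ and as $t\to\infty$ (for $\ell=2$, say, symmetrically for $\ell=1$ after renormalizing), hence $C^{\sigma}\cdot D_I=2$. Items (3) and (4) are the same analysis with the fixed marked point(s) $a$ (resp. $a,b$) forced to sit with the orbit on the tail or stay on the spine, cutting the count from $2$ to $1$.

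For the ``summary'' statements (6) and (7) I would translate the boundary divisors $D_I$ back into the standard basis indexed by the cardinality of $I$. Recall $D_k=\sum_{|I|=k}D_I$ (with the usual convention identifying $I$ and its complement), so $C^{\sigma}\cdot D_k=\sum_{|I|=k}C^{\sigma}\cdot D_I$, and by items (1)--(5) only the $I$ appearing there contribute. For $D_2$: the contributing $I$ of size $2$ are exactly the $\{h,i\}$ with $h\in\sigma_1$, $i\in\sigma_2$, of which there are $|\sigma_1|\cdot|\sigma_2|=r\cdot r=r^2$, each contributing $1$, giving $C^{\sigma}\cdot D_2=r^2$ — valid when $n\ge 6$ so that size-$2$ sets are not also of the form $\sigma_\ell$ (this fails when $r=2$ and the orbit itself has size $2$, i.e. $n=5$, which is why that case is separated). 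For $D_{\lfloor n/2\rfloor}$: the orbit-type divisors $D_{\sigma_\ell}$ (and their variants with fixed points adjoined) have $|I|$ equal to $r$, $r+1$, or, when $j=2$, $r+1$; one checks that in each case the relevant size equals $\lfloor n/2\rfloor$ and that exactly the configurations counted by items (2)--(4) contribute $2$ in total (either two orbit-tails each contributing $1$, or in the $F=\emptyset$ case the two divisors $D_{\sigma_1},D_{\sigma_2}$ each of multiplicity... — here one must be careful: with $F=\emptyset$, $D_{\sigma_1}$ and $D_{\sigma_2}$ are the \emph{same} divisor since $\sigma_1$ and $\sigma_2=[n]\setminus\sigma_1$ are complementary, so its coefficient is $2$, consistent with (2)). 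For $n=5$, $j=1$, $r=2$: here $\sigma=(ab)(cd)$ fixing one point $e$, $|F|=1$, and one recomputes directly — $D_2$ on $\ovop{M}_{0,5}$ is a single curve class up to symmetry and the count assembles from items (1) and (3) into $6$; I would just do this small case by hand.

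The main obstacle I expect is the transversality/multiplicity bookkeeping: asserting $C^{\sigma}\cdot D_I$ equals the \emph{number} of degenerate fibers requires knowing each such fiber is hit with multiplicity one, which means analyzing the local-analytic structure of $\Mzn$ near those boundary points and checking the family is not tangent to $D_I$. The clean way around this is to push the family down to a suitable $\ovop{M}_{0,4}\cong\PP^1$ via a forgetful map sending $C^{\sigma}$ isomorphically (as in the proof of Lemma \ref{lem:isomorphictoP1}), where boundary intersections are literally evaluations of a cross-ratio and multiplicities are visibly $1$, then pull back using the projection formula and the known expression of the forgetful pullback of $\delta_{0,4}$-classes in terms of the $D_I$. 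A secondary nuisance is keeping the three cases $j=0,1,2$ and the two normalization conventions straight, and correctly handling the identification $D_I=D_{I^c}$ when collecting terms for (6) and (7); careful choice of representatives $I$ with, say, a fixed element in $I$ avoids double-counting.
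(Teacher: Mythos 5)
Your proposal is correct and follows essentially the same route as the paper: parametrize $C^{\sigma}$ by the position $t$ of one orbit representative, enumerate the values of $t$ at which orbits collide with each other or with the $\sigma$-fixed points, and identify the boundary divisor containing each degenerate fiber (noting the identification $D_I=D_{I^c}$ when assembling (6) and (7)). The one place you go beyond the paper is in flagging the multiplicity/transversality issue and proposing to resolve it via the forgetful map to $\ovop{M}_{0,4}$; the paper's proof simply counts degenerate fibers and implicitly takes each with multiplicity one.
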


\begin{proof}
We count all points on $C^{\sigma}$ that parametrize 
singular curves. They arise when some of the marked points
(equivalently, some of the orbits) collide. There are two marked orbits (orbits consisting of marked points) of order $r$ and $j$ marked orbits of order one. 

First of all, consider the case that two marked orbits collide. In particular, suppose that $x_{h}$ for $h \in \sigma_{1}$ approaches $x_{i}$ for $i \in \sigma_{2}$, where $x_{h}$ (resp. $x_{i}$) is the $h$-th (resp. $i$-th) marked point. Then simultaneously $x_{\sigma^{k}(h)} = x_{\sigma_{1}^{k}(h)}$ approaches $x_{\sigma^{k}(i)} = x_{\sigma_{2}^{k}(i)}$, with a constant rate of speed. Thus the stable limit is on $\cap_{0 \le k < r} D_{\{\sigma^{k}(h), \sigma^{k}(i)\}}$, which parametrizes a curve with $r$ tails such that on each tail there are two marked points $x_{\sigma^{k}(h)}$ and $x_{\sigma^{k}(i)}$. This proves (1). 

For $i \in \sigma_{1}$, if $x_{i}$ approaches one of two $\sigma$-fixed points on $\PP^{1}$, then we have another singular stable limit. If $x_{i}$ goes to a non-marked point, then the limit is on $D_{I}$ where $I = \sigma_{1}$. If $x_{i}$ goes to a fixed marked point $x_{a}$, then the limit is on $D_{I\cup \{a\}}$. So we obtain parts (2), (3), and (4). 

Except those cases above, all other points on $C^{\sigma}$ parametrize smooth curves. So we have part (5).

If $n \ge 7$, then $r \ge 3$. In this case, item (1) is the only case
where $C^{\sigma} \cdot D_{I} > 0$ with $|I| = 2$. Because for a fixed $i$, there are $r$ different possible choices of $j$, we have $C^{\sigma} \cdot D_{2} = r^{2}$. There are two additional degenerations corresponding to the case that $x_{i}$ approaches one of two fixed points. In any case of (2), (3), and (4), it is straightforward to check that $|I| = \lfloor \frac{n}{2}\rfloor$. Therefore $C^{\sigma}\cdot D_{\lfloor \frac{n}{2}\rfloor} = 2$. The case of $n = 5, 6$ are obtained by a simple case by case analysis with the same idea. So we have (6) and (7). 
\end{proof}

\begin{corollary}\label{cor:intersectioncanonicaldivisor}
Let $\sigma \in S_{n}$ be a balanced permutation of type $(j, r)$. Then 
\[
	C^{\sigma}\cdot K_{\Mzn} = -4+j.
\]
\end{corollary}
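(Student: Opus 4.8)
The plan is to deduce the corollary directly from Proposition~\ref{prop:intersectionboundary} by pairing $C^{\sigma}$ with a standard expression for the canonical class of $\Mzn$ in terms of boundary divisors. In the usual notation, write $D_i=\sum_{|I|=i}D_I$ for the sum of all boundary divisors whose smaller part has $i$ elements. Then
\[
K_{\Mzn}\equiv\sum_{i=2}^{\lfloor n/2\rfloor}\left(\frac{i(n-i)}{n-1}-2\right)D_i .
\]
I would either quote this or recall its short derivation: $K_{\Mzn}\equiv\sum_{i=1}^{n}\psi_i-2\sum_{I}D_I$, and summing the relation $\psi_a+\psi_b=\sum_{a\in I,\,b\notin I}D_I$ over all pairs $\{a,b\}$ gives $\sum_{i=1}^{n}\psi_i=\frac{1}{n-1}\sum_{I}|I|(n-|I|)D_I$, which yields the displayed form. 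Once this is in place, the proof is a pairing computation.

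For $n\ge 6$, Proposition~\ref{prop:intersectionboundary}(6) gives $C^{\sigma}\cdot D_2=r^{2}$, $C^{\sigma}\cdot D_{\lfloor n/2\rfloor}=2$, and all other intersections zero; moreover $n\ge 6$ forces $\lfloor n/2\rfloor\ge 3$, so $D_2$ and $D_{\lfloor n/2\rfloor}$ are distinct classes. The coefficient of $D_2$ in $K_{\Mzn}$ collapses to $-\tfrac{2}{n-1}$ and that of $D_{\lfloor n/2\rfloor}$ to $\tfrac{\lfloor n/2\rfloor\lceil n/2\rceil}{n-1}-2=\tfrac{\lfloor n^{2}/4\rfloor}{n-1}-2$, whence
\[
C^{\sigma}\cdot K_{\Mzn}=\frac{2\bigl(\lfloor n^{2}/4\rfloor-r^{2}\bigr)}{n-1}-4 .
\]
Writing $n=2r+j$ with $0\le j\le 2$, a one-line check in each of the three cases $j=0,1,2$ gives $2\bigl(\lfloor n^{2}/4\rfloor-r^{2}\bigr)=j(n-1)$, so the right-hand side equals $-4+j$. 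The remaining case is $n=5$, where necessarily $j=1$ and $r=2$: here $D_2$ is the only boundary class, $K_{\ovop{M}_{0,5}}\equiv-\tfrac12 D_2$, and $C^{\sigma}\cdot D_2=6$ by Proposition~\ref{prop:intersectionboundary}(7), so $C^{\sigma}\cdot K_{\Mzn}=-3=-4+j$.

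I do not expect a real obstacle: the statement is essentially bookkeeping layered on Proposition~\ref{prop:intersectionboundary}. The two places that require care are (i) matching the normalization of the canonical-class formula with the convention $D_I=D_{I^{c}}$ — this identification is precisely why the divisors occurring in Proposition~\ref{prop:intersectionboundary}(2)--(4), although named there by a part such as $\sigma_{\ell}\cup\{a\}$ of size $r+1$, all count toward $D_{\lfloor n/2\rfloor}$ (their complementary part has size $r=\lfloor n/2\rfloor$ when $j=1$); and (ii) the split into the three values of $j$, which is unavoidable but trivial, and which I would present simply by tabulating $\lfloor n^{2}/4\rfloor-r^{2}$ against $j(n-1)$.
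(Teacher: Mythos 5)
Your proposal is correct and follows exactly the paper's route: the paper likewise quotes Pandharipande's formula $K_{\Mzn}=\sum_{k}\bigl(-2+\tfrac{k(n-k)}{n-1}\bigr)D_{k}$ and pairs it with the intersection numbers from Proposition~\ref{prop:intersectionboundary}, leaving the arithmetic to the reader. You have simply written out that bookkeeping (including the $n=5$ edge case and the $D_I=D_{I^c}$ normalization), and the computations check out.
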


\begin{proof}
The numerical class of the canonical divisor $K_{\Mzn}$ is given by 
\begin{equation}\label{eqn:canonicaldivisor}
K_{\Mzn} = \sum_{k=2}^{\lfloor \frac{n}{2}\rfloor}\left(-2 + \frac{k(n-k)}{n-1}\right)D_{k}
\end{equation}
(\cite[Proposition 1]{Pan97}). By using Proposition
\ref{prop:intersectionboundary} and formula \eqref{eqn:canonicaldivisor}, we can compute the intersection number.
\end{proof}

Since $\psi \equiv K_{\Mzn}+2D$ (\cite[Lemma 2.9]{Moo13}), it is immediate to get the following result.

\begin{corollary}
Let $\sigma \in S_{n}$ be a balanced permutation of type $(j, r)$. Then 
\[
	C^{\sigma} \cdot \psi = 2r^{2}+j.
\]
\end{corollary}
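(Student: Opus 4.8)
The plan is to combine the relation $\psi \equiv K_{\Mzn} + 2D$ with the already-computed intersection numbers, so the whole argument reduces to an additive bookkeeping of two contributions. First I would recall that $D = \sum_{k=2}^{\lfloor n/2\rfloor} D_k$ is the total boundary, so that $C^{\sigma}\cdot D = \sum_k C^{\sigma}\cdot D_k$. By Corollary \ref{cor:intersectioncanonicaldivisor} we already have $C^{\sigma}\cdot K_{\Mzn} = -4 + j$, so it suffices to compute $C^{\sigma}\cdot D$ and then add:
\[
	C^{\sigma}\cdot \psi = C^{\sigma}\cdot K_{\Mzn} + 2\,(C^{\sigma}\cdot D) = (-4+j) + 2\,(C^{\sigma}\cdot D).
\]

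Next I would evaluate $C^{\sigma}\cdot D$ directly from Proposition \ref{prop:intersectionboundary}. For $n \ge 6$ the proposition states that the only nonzero intersections are $C^{\sigma}\cdot D_2 = r^2$ and $C^{\sigma}\cdot D_{\lfloor n/2\rfloor} = 2$, so the total boundary intersection is $C^{\sigma}\cdot D = r^2 + 2$. Substituting, $C^{\sigma}\cdot\psi = (-4+j) + 2(r^2+2) = 2r^2 + j$, which is exactly the claimed formula. I would then handle the small case $n=5$ (where $j=1$, $r=2$) separately using part (7): there $C^{\sigma}\cdot D = C^{\sigma}\cdot D_2 = 6$, and since $K_{\Mzn}$ is a multiple of $D_2$ when $n=5$, one checks $C^{\sigma}\cdot K_{\Mzn} = -3 = -4+j$, giving $C^{\sigma}\cdot\psi = -3 + 12 = 9 = 2r^2+j$, consistent with the general formula.

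The one genuine subtlety worth flagging is the interaction between parts (1)--(5) and part (6) of Proposition \ref{prop:intersectionboundary}: parts (1)--(4) record intersections indexed by the \emph{combinatorial type} $I$ of the degeneration, whereas $D_k$ groups all boundary divisors by the size $|I| = k$ of one side of the partition. So the computation of $C^{\sigma}\cdot D_2 = r^2$ in part (6) is already the aggregated count (from part (1), with $r$ choices on each of the two orbits meeting at a point), and $C^{\sigma}\cdot D_{\lfloor n/2\rfloor} = 2$ aggregates the two tail-collision degenerations of parts (2)--(4). I expect the main obstacle to be nothing deep, but rather the care needed to confirm that when $n \ge 6$ every degeneration of types (1)--(4) indeed lands in either $D_2$ or $D_{\lfloor n/2\rfloor}$ and nowhere else, so that summing over all $k$ really does give only the two terms $r^2 + 2$. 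Once that is confirmed, the result is immediate from linearity of intersection with the relation $\psi \equiv K_{\Mzn} + 2D$.
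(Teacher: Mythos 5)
Your proposal is correct and follows exactly the paper's intended route: the paper derives this corollary in one line from the relation $\psi \equiv K_{\Mzn}+2D$ together with Corollary \ref{cor:intersectioncanonicaldivisor} and Proposition \ref{prop:intersectionboundary}, which is precisely the computation $(-4+j)+2(r^{2}+2)=2r^{2}+j$ that you carry out. Your separate verification of the $n=5$ case and the remark about aggregating the type-indexed intersections into the $D_k$ classes are careful additions, but they do not change the argument.
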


\section{The curve class $C^{\sigma}$ as an effective linear combination of F-curves}\label{sec:efflincombcyclic}
In this section, we show two facts.  First, the curve class $C^{\sigma}$ in Definition \ref{def:Csigma} is an effective $\ZZ$-linear combination of F-curves. Second, it is movable, so it is in the dual cone of the effective cone of $\Mzn$. 

\subsection{The nonadjacent basis and its dual basis}

For $\mathrm{Pic}(\Mzn)_{\QQ} \cong \mathrm{H}^2(\Mzn, \QQ)$, there is a basis
due to Keel and Gibney called the \emph{nonadjacent} basis. Let $G_{n}$ be a cyclic graph
with $n$ vertices $[n] := \{1,2,\cdots, n\}$ labeled in that order. For a subset $I \subset [n]$, let $t(I)$ be the number of connected components of the subgraph generated by vertices in $I$. A subset $I$ is called \emph{adjacent} if $t(I) = 1$. Since $G_{n}$ is cyclic, if $t(I) = k$, then $t(I^{c}) = k$.

\begin{proposition}[\cite{Car09}*{Proposition 1.7}] \label{prop:nonadjacentbasis}
Let $\mathcal{B}$ be the set of boundary divisors $D_{I}$ for $I \subset [n]$ with $t(I) \ge 2$. Then $\mathcal{B}$ forms a basis of $\mathrm{Pic}(\Mzn)_{\QQ}$.
\end{proposition}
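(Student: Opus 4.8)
The plan is to combine a dimension count with a spanning argument via Keel's relations. First I would recall Keel's presentation of $\Pic(\Mzn)_{\QQ}$ (\cite{Kee92}): it is generated by the boundary classes $D_{I}$ with $2 \le |I| \le n-2$ (and $D_{I} = D_{I^{c}}$), modulo the relations obtained by pulling back the linear equivalences among the three boundary points of $\ovop{M}_{0,4}$ along the forgetful maps; concretely, for four distinct indices $i,j,k,l \in [n]$,
\[
\sum_{\substack{i,j \in I\\ k,l \notin I}} D_{I} \;\equiv\; \sum_{\substack{i,k \in I\\ j,l \notin I}} D_{I},
\]
and $\rank \Pic(\Mzn)_{\QQ} = 2^{n-1} - \binom{n}{2} - 1$. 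Next I would check that $|\mathcal{B}|$ equals this number: of the $2^{n} - 2n - 2$ subsets $I$ with $2 \le |I| \le n-2$, exactly $n(n-3)$ are adjacent, since $G_{n}$ has $n$ arcs of each size $2,3,\ldots,n-2$; this leaves $2^{n} + n - n^{2} - 2$ non-adjacent subsets, and halving (using $t(I) = t(I^{c})$) gives $|\mathcal{B}| = 2^{n-1} - \binom{n}{2} - 1$. So it suffices to prove that $\mathcal{B}$ spans $\Pic(\Mzn)_{\QQ}$, i.e. that every adjacent $D_{I}$ lies in $\mathrm{span}_{\QQ}(\mathcal{B})$.

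To prove spanning I would induct on the size $k = |I|$ of an adjacent set, writing the arc as $I = \{a+1, a+2, \ldots, a+k\}$ (indices mod $n$) with $2 \le k \le n-2$. The key step is to apply the Keel relation to the four indices $a+1, a+2 \in I$ and $a, a+k+1 \notin I$ (distinct precisely because $2 \le k \le n-2$; in particular $n \ge 4$), in the form
\[
\sum_{\substack{a+1,\,a+2 \in J\\ a,\,a+k+1 \notin J}} D_{J} \;\equiv\; \sum_{\substack{a+1,\,a+k+1 \in J\\ a,\,a+2 \notin J}} D_{J}.
\]
On the left, an adjacent $J$ must contain the edge $\{a+1,a+2\}$ and can extend neither below $a+1$ (blocked by $a$) nor above $a+k$ (blocked by $a+k+1$), so it is one of the arcs $\{a+1,\ldots,m\}$ with $a+2 \le m \le a+k$, the largest being $I$ itself. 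On the right, every $J$ contains $a+1$ but neither of its cyclic neighbours $a$ and $a+2$, so $a+1$ is an isolated vertex of the subgraph induced by $J$, and since $J$ also contains $a+k+1 \ne a+1$ we get $t(J) \ge 2$, i.e. $D_{J} \in \mathcal{B}$. Solving for $D_{I}$ then writes it as a $\QQ$-combination of elements of $\mathcal{B}$ and of divisors $D_{\{a+1,\ldots,m\}}$ attached to strictly smaller adjacent sets. The base case $k = 2$ (where the latter collection is empty) and the inductive step therefore finish the proof that $\mathcal{B}$ spans, and together with the cardinality count this shows $\mathcal{B}$ is a basis.

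I expect the only real work to be the index bookkeeping in the displayed step — pinning down exactly which sets $J$ are adjacent on each side of that particular Keel relation — which is elementary but needs care with the cyclic indices; I would also dispose separately of the degenerate cases $n \le 4$, where $\Pic(\Mzn)_{\QQ}$ has rank at most $1$ and the statement is immediate. No deeper obstacle is expected: once the right quadruple $\{a, a+1, a+2, a+k+1\}$ is chosen the argument is purely formal.
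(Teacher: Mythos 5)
The paper does not prove this proposition at all --- it simply cites \cite{Car09}*{Proposition 1.7} --- so there is no in-paper argument to compare against; what you have written is a complete, self-contained proof of the cited result, and it checks out. The two halves are both sound: the cardinality count is right (there are $2^{n}-2n-2$ subsets with $2\le |I|\le n-2$, of which $n(n-3)$ are arcs of the cycle, and since $t(I)=t(I^{c})$ and no $I$ is self-complementary, halving the non-adjacent ones gives exactly $2^{n-1}-\binom{n}{2}-1=\rank \Pic(\Mzn)_{\QQ}$); and the spanning induction works as stated. In the key displayed Keel relation the four indices $a,a+1,a+2,a+k+1$ are distinct precisely because $2\le k\le n-2$, on the left the only adjacent $J$'s are the arcs $\{a+1,\dots,m\}$ with $a+2\le m\le a+k$ (pinned at $a+1$ by the exclusion of $a$ and cut off by the exclusion of $a+k+1$), and on the right $a+1$ is isolated in the induced subgraph while $a+k+1\in J$ forces $t(J)\ge 2$; solving for $D_{I}$ and invoking the inductive hypothesis on the shorter arcs finishes the argument, with the base case $k=2$ handled because the left-hand side then contains no other arc. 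The only mildly implicit point is that every generator $D_{I}$ of $\Pic(\Mzn)_{\QQ}$ is either already in $\mathcal{B}$ or adjacent, so spanning the adjacent classes suffices --- but that is immediate from the definition of $t(I)$. No gap.
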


\begin{definition}\label{def:nonadjacentbasis}
The set $\mathcal{B}$ in Proposition \ref{prop:nonadjacentbasis} is called the \emph{nonadjacent basis}. 
\end{definition}

Since there is an intersection pairing $\mathrm{H}^{2}(\Mzn, \QQ) \times
\mathrm{H}_{2}(\Mzn, \QQ) \to \QQ$, we obtain a basis of  $\mathrm{H}_2(\Mzn, \QQ)$ dual
to the nonadjacent basis. Indeed, many vectors in this basis are F-curves. 

\begin{example}
On $\ovop{M}_{0,5}$, the nonadjacent basis is
\[
	\{D_{\{1,3\}}, D_{\{1,4\}}, D_{\{2,4\}}, D_{\{2,5\}}, D_{\{3,5\}}\}.
\]
It is straightforward to check that the dual basis is (in the corresponding order)
\[
	\{F_{\{1,2,3,45\}}, F_{\{1,4,5,23\}}, F_{\{2,3,4,15\}}, 
	F_{\{1,2,5,34\}}, F_{\{3,4,5,12\}}\}.
\]
So all of the dual elements are F-curves.
\end{example}

\begin{example}\label{ex:dualofnonadjacentonn=6}
On $\ovop{M}_{0,6}$, the nonadjacent basis is
\[
	\{D_{\{1,3\}}, D_{\{1,4\}}, D_{\{1,5\}}, D_{\{2,4\}}, D_{\{2,5\}}, 	D_{\{2,6\}}, D_{\{3,5\}}, D_{\{3,6\}}, D_{\{4,6\}},
\]
\[
	D_{\{1,2,4\}}, D_{\{1,2,5\}}, D_{\{1,3,4\}}, D_{\{1,3,5\}}, 
	D_{\{1,3,6\}}, D_{\{1,4,5\}}, D_{\{1,4,6\}}\}.
\]
The dual basis is
\[
	\{F_{\{1,2,3,456\}}, F_{\{1,4,23,56\}}, F_{\{1,5,6,234\}}, 
	F_{\{2,3,4,156\}}, F_{\{2,5,16,34\}}, F_{\{1,2,6,345\}}, 
\]
\[
	F_{\{3,4,5,126\}}, F_{\{3,6,12,45\}}, F_{\{4,5,6,123\}},
	F_{\{3,4,12,56\}}, F_{\{5,6,12,34\}}, F_{\{1,2,34,56\}}, 
\]
\[
	F_{\{5,6,13,24\}}+F_{\{1,2,3,456\}}+F_{\{2,3,4,156\}}-
	F_{\{2,3,16,45\}},
\]
\[
	F_{\{2,3,16,45\}}, F_{\{1,6,23,45\}}, F_{\{4,5,16,23\}}\}.
\]
Except for the curve dual to $D_{\{1,3,5\}}$, all other vectors in the dual basis are F-curves.
\end{example}

%{\color{red} Change I,J below to B,G? HB: I think it would be better to keep the current notation using I, J.}

\begin{proposition}\label{prop:dualFcurve}
Let $D_{I}$ be a boundary divisor in the nonadjacent basis. The dual of $D_{I}$ is an F-curve if and only if $t(I) = 2$. In this case, if $I_{1} \sqcup I_{2} = I$ and $J_{1} \sqcup J_{2} = I^{c}$ are the decompositions of $I$ and $I^{c}$ into two connected sets, then the dual of $D_{I}$ is $F_{I_{1}, I_{2}, J_{1}, J_{2}}$.
\end{proposition}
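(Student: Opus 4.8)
The plan is to reduce the statement to the well-known intersection formula between F-curves and boundary divisors on $\Mzn$. Recall that for an F-curve $F_{A_1,A_2,A_3,A_4}$ (associated to a partition $[n] = A_1 \sqcup A_2 \sqcup A_3 \sqcup A_4$) and a boundary divisor $D_J$, the intersection number $F_{A_1,A_2,A_3,A_4} \cdot D_J$ is computed combinatorially: it is nonzero precisely when $J$ (or its complement) is a union of exactly two of the four parts $A_i$, in which case the number is $1$; it can also be $-1$ when $J$ ``cuts'' the partition in a more degenerate way (one part split off), and is $0$ otherwise. I would state this formula precisely at the start, citing \cite{Kee92} or the conventions in \cite{Moo13}.

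First I would handle the ``if'' direction. Suppose $t(I) = 2$, so $I = I_1 \sqcup I_2$ and $I^c = J_1 \sqcup J_2$ with each of $I_1, I_2, J_1, J_2$ a connected (adjacent) arc of the cyclic graph $G_n$. Then $\{I_1, I_2, J_1, J_2\}$ is a genuine partition of $[n]$ into four nonempty parts (each is nonempty since $t(I) = 2$ forces $I$ and $I^c$ to each have two components), so $F_{I_1,I_2,J_1,J_2}$ is a bona fide F-curve. I would then verify that $F_{I_1,I_2,J_1,J_2}$ pairs correctly against every element $D_K$ of the nonadjacent basis: by the intersection formula, $F_{I_1,I_2,J_1,J_2} \cdot D_K \neq 0$ only when $K$ or $K^c$ is a union of two of $I_1, I_2, J_1, J_2$. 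The four such ``two-part unions'' that are themselves \emph{nonadjacent} (i.e.\ in $\mathcal{B}$) need to be enumerated: $I_1 \sqcup I_2 = I$, $J_1 \sqcup J_2 = I^c$ (which give $D_I$ itself, since $t(I)=2$), while the ``mixed'' unions $I_1 \sqcup J_1$, etc., are adjacent arcs (connected) precisely because of the cyclic arrangement, hence \emph{not} in $\mathcal{B}$. Thus among the basis elements, $F_{I_1,I_2,J_1,J_2}$ meets only $D_I$ (with multiplicity $1$), which shows it is the dual vector to $D_I$. Here I should be careful about the $-1$ contributions in the intersection formula and about the possibility that $K = I^c$ also contributes; since $D_I = D_{I^c}$ as divisors this is consistent, and I would check the mixed-union arcs contribute $0$ because the relevant $K$ is adjacent so not a basis element — the pairing against the \emph{basis} is what matters.

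Next, the ``only if'' direction. Suppose $t(I) = k \ge 3$. I would argue that the dual vector $D_I^\vee$ cannot be (a positive multiple of, hence) an F-curve. If it were an F-curve $F_{A_1,A_2,A_3,A_4}$, then by the intersection formula the set of basis divisors it meets with nonzero multiplicity would be governed by the four parts $A_i$; but duality demands it meet \emph{exactly} $D_I$ among all of $\mathcal{B}$, and meet it with multiplicity $1$. I would show this is impossible: any F-curve meets at least one nonadjacent basis divisor besides a prescribed $D_I$ with $t(I) \geq 3$, because the ``cutting'' combinatorics of a four-part partition against the cyclic order cannot isolate a single disconnected subset with $t = k \geq 3$ — a four-part partition $\{A_1, A_2, A_3, A_4\}$ produces unions $A_i \sqcup A_j$ having at most, and generically exactly, the number of components determined by how the $A_i$ sit cyclically, and one checks that to have $A_1 \sqcup A_2$ with three or more components one is forced to also produce other nonadjacent two-part unions, contradicting that $D_I^\vee$ meets only $D_I$. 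The cleanest phrasing may be: the example computations (Example \ref{ex:dualofnonadjacentonn=6}) already exhibit that the dual of $D_{\{1,3,5\}}$ is a nontrivial combination of four F-curves, and the general obstruction is that no single F-curve class is supported on just one nonadjacent basis divisor when that divisor has $t \ge 3$.

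The main obstacle I expect is the bookkeeping in the ``only if'' direction: making the claim ``the dual of $D_I$ is not an F-curve when $t(I) \ge 3$'' rigorous rather than merely illustrated. One clean route is to count: the intersection pairing identifies $\mathrm{H}_2$ with $\mathrm{H}^2{}^\vee$, and an F-curve class $F$, expanded in the dual-of-nonadjacent basis, has coefficient $F \cdot D_I$ in the slot of $D_I^\vee$; so $D_I^\vee$ is an F-curve iff there is an F-curve $F$ with $F \cdot D_I = 1$ and $F \cdot D_K = 0$ for all other $K \in \mathcal{B}$. I would then show via the explicit combinatorial intersection formula that a four-part cyclic partition forces at least two nonadjacent basis divisors to be hit (for $t(I) \ge 3$), e.g.\ by observing that among the three ways of grouping $\{A_1,A_2,A_3,A_4\}$ into two pairs, at least two produce subsets with $t \ge 2$ when one of them does and has $t\ge 3$. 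That is the step requiring genuine (if elementary) care; everything else is the standard F-curve/boundary intersection dictionary applied to the cyclic-graph adjacency condition.
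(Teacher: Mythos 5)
Your ``if'' direction is essentially the paper's proof: list the $D_K$ meeting $F_{I_1,I_2,J_1,J_2}$ nontrivially (the four single parts, which pair to $-1$, and the pairwise unions, which pair to $+1$), observe that the cyclic order of the blocks is $I_1, J_1, I_2, J_2$ so the four single parts and the two mixed unions $I_1\sqcup J_1$, $I_1\sqcup J_2$ (and their complements) are all connected arcs, hence not in $\mathcal{B}$, leaving only $D_I$. That part is complete and correct.

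The ``only if'' direction is where you diverge, and where the real gap sits. You try to show that a four-part partition with $t(A_1\sqcup A_2)\ge 3$ must produce \emph{another disconnected two-part union}, i.e.\ a second basis divisor hit with coefficient $+1$. That claim happens to be true, but you never prove it --- you write ``one checks that\ldots'' and ``I would then show\ldots'' and correctly flag it as the step requiring care; as submitted, the argument is incomplete there, and making it rigorous is genuinely fiddly (one has to track how the gaps between the components of a disconnected part can be filled by the other parts). The paper's argument avoids all of this by using the $-1$ contributions you already set up for the ``if'' direction: if $F_{A_1,A_2,A_3,A_4}\cdot D_I=1$ then $A_i\sqcup A_j=I$ for some pair, and since two connected arcs union to a set with at most two components, $t(I)\ge 3$ forces one of $A_i, A_j$ to be disconnected; that single part $A_i$ then satisfies $t(A_i)\ge 2$, so $D_{A_i}\in\mathcal{B}$ and $F\cdot D_{A_i}=-1\neq 0$, contradicting duality immediately. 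I recommend you replace your pairing-count argument with this one-line observation; alternatively, if you want to keep your route, you must actually prove the combinatorial claim about the three pairings rather than assert it, and you should note that the appeal to Example \ref{ex:dualofnonadjacentonn=6} is only an illustration, not a proof.
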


\begin{proof}
Since $I$ is not connected, $t(I) \ge 2$. For $t(I) > 2$, let $I_{1}, I_{2}, \cdots, I_{t(I)}$ be the connected components of $I$. To obtain $D_{I} \cdot F_{A_{1}, A_{2},A_{3}, A_{4}} = 1$, we need $A_{i} \sqcup A_{j} = I$ for two distinct $i, j \in \{1,2,3,4\}$. Then at least one of $A_{i}$ must be disconnected. So $D_{A_{i}}$ is in the nonadjacent basis and $D_{A_{i}} \cdot F_{A_{1}, A_{2}, A_{3}, A_{4}}= -1$. Therefore the dual element is not an F-curve.

Now suppose that $t(I) = 2$. Let $I = I_{1}\sqcup I_{2}$ and $I^{c} = J_{1} \sqcup J_{2}$ be the decompositions of $I$ and $I^{c}$ into connected components. Then for $D_{K} \in \mathcal{B}$, we have a nonzero intersection $D_{K} \cdot F_{I_{1}, I_{2}, J_{1}, J_{2}}$ only if $K$ is one of $I_{1}, I_{2}, J_{1}, J_{2}, I_{1}\sqcup I_{2}, I_{1}\sqcup J_{1}, I_{1} \sqcup J_{2}$ or their complements. But except $I = I_{1}\sqcup I_{2}$ (equivalently, its complement $I^{c} = J_{1} \sqcup J_{2}$), all of them are connected so only $D_{I}$ is in $\mathcal{B}$. Thus $F_{I_{1}, I_{2}, J_{1}, J_{2}}$ is the dual element of $D_{I}$. 
\end{proof}

In general, the dual curve for $D_{I}$ with $t(I) > 2$ is a complicated non-effective $\ZZ$-linear combination of F-curves. We give an inductive algorithm to find the combination.

\begin{proposition}
For $D_{I}$ with $t(I) = t \ge 3$, let $I = I_{1}\sqcup I_{2}\sqcup \cdots \sqcup I_{t}$ and $I^{c} = J_{1}\sqcup J_{2}\sqcup \cdots \sqcup J_{t}$ be the decompositions into connected components, in the circular order of $I_{1}, J_{1}, I_{2}, J_{2}, \cdots, I_{t}, J_{t}$. Then the dual basis of $D_{I}$ is a $\ZZ$-linear combination of F-curves. The linear combination can be computed inductively. 
\end{proposition}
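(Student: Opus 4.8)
The plan is to reduce the rank-$t$ case to the rank-$(t-1)$ case by means of a single F-curve relation, exactly in the spirit of the worked example on $\ovop{M}_{0,6}$ (Example \ref{ex:dualofnonadjacentonn=6}), where the dual of $D_{\{1,3,5\}}$ was expressed using three genuine F-curves and one F-curve $F_{\{2,3,16,45\}}$ that is already in the dual basis. First I would write $I = I_1 \sqcup \cdots \sqcup I_t$ and $I^c = J_1 \sqcup \cdots \sqcup J_t$ in the stated circular order $I_1, J_1, I_2, J_2, \dots, I_t, J_t$. The natural candidate to ``peel off'' is the F-curve $F = F_{I_1, J_1, \; I_2, \; J_2 \sqcup I_3 \sqcup J_3 \sqcup \cdots \sqcup J_t}$ (or a similar partition of $[n]$ into four arcs, three of them short and one long); the point of choosing the four parts to be \emph{arcs} of the cyclic graph $G_n$ is that, by the computation in the proof of Proposition \ref{prop:dualFcurve}, the only nonadjacent divisors meeting $F$ nontrivially are $D_I$ itself and the handful of $D_K$ where $K$ is a union of two of the four arcs with $t(K) = 2$ — and those latter divisors are \emph{not} in $\mathcal{B}$ unless they coincide with $D_I$. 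So the intersection vector of $F$ against $\mathcal{B}$ is supported almost entirely on $D_I$.

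The key steps, in order, are: (i) fix the circular order and choose the four-part arc partition defining $F$; (ii) compute $D_K \cdot F$ for every $D_K \in \mathcal{B}$ using the standard F-curve intersection rules, and observe that the answer is $1$ on $D_I$ and $0$ on all other nonadjacent divisors \emph{except} for a controlled correction coming from unions of arcs that happen to be disconnected — but each such ``bad'' $K$ has $t(K) = 2$, hence its dual is a genuine F-curve $F_K$ by Proposition \ref{prop:dualFcurve}, or else $K$ corresponds to a divisor that is a union of strictly fewer arcs, i.e. to a rank-$(t-1)$ situation; (iii) conclude that
\[
\bigl(\text{dual of } D_I\bigr) \;=\; F \;-\; \sum_{K} (D_K \cdot F)\,\bigl(\text{dual of } D_K\bigr),
\]
where each $K$ in the sum has $t(K) < t$ or is handled by Proposition \ref{prop:dualFcurve}; (iv) invoke the inductive hypothesis on each ``dual of $D_K$'' with $t(K) \le t-1$, and the base case $t = 2$ supplied by Proposition \ref{prop:dualFcurve}. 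This expresses the dual of $D_I$ as a $\ZZ$-linear combination of F-curves, and makes the recursion explicit: the algorithm is ``choose the arc F-curve $F$, subtract its corrections against lower-rank duals, recurse.''

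The main obstacle I expect is step (ii): pinning down \emph{exactly} which nonadjacent divisors $D_K$ have $D_K \cdot F \ne 0$ and verifying that each of them genuinely has smaller $t$-value (or is covered by the $t(K) = 2$ base case), so that the recursion strictly decreases complexity and terminates. This is a bookkeeping problem about which unions of four cyclic arcs are themselves disconnected and how many components they have; the circular ordering hypothesis on $I_1, J_1, \dots, I_t, J_t$ is precisely what keeps this bookkeeping under control, because it guarantees that merging two adjacent arcs lowers $t$ by exactly one while merging two non-adjacent arcs produces a $K$ with $t(K) = 2$. A secondary, more cosmetic obstacle is choosing the four-arc partition for $F$ uniformly in $t$ so that the statement ``the linear combination can be computed inductively'' is backed by a genuinely well-defined procedure rather than an ad hoc choice at each stage; I would address this by always splitting off the first two arcs $I_1$ and $J_1$ as singletons-of-arcs and lumping $I_2$ against everything after it, which is the pattern visible in the $\ovop{M}_{0,6}$ example.
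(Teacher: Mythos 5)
There is a fatal structural problem with your choice of the ``leading'' F-curve. You propose to peel off an F-curve $F$ whose four parts are \emph{arcs} of the cyclic graph $G_n$ (three short, one long). But for any partition of $[n]$ into four arcs $A_1,A_2,A_3,A_4$, every boundary divisor $D_N$ with $D_N\cdot F_{A_1,A_2,A_3,A_4}\ne 0$ has $N$ (up to complement) equal to a single arc or a union of two arcs, hence $t(N)\le 2$; the remaining case $N^c=A_i$ gives $N$ equal to the complement of an arc in a cycle, which is again connected. Since your $I$ has $t(I)=t\ge 3$, it follows that $D_I\cdot F=0$ for \emph{every} all-arc F-curve. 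Consequently the expression $F-\sum_K (D_K\cdot F)\,E_K$ pairs to zero against the entire nonadjacent basis, including $D_I$ itself: you would prove the (true but useless) identity $F=\sum_K (D_K\cdot F)E_K$ rather than produce the dual of $D_I$. No choice of the four arcs repairs this; the obstruction is that $I$ can never be written as a union of at most two connected arcs.

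The paper avoids this by taking the leading F-curve to have deliberately \emph{disconnected} parts: $F_{I_1\sqcup I_2,\; J_1\sqcup J_2,\; I_3\sqcup\cdots\sqcup I_t,\; J_3\sqcup\cdots\sqcup J_t}$, so that the union of the first and third parts is exactly $I$ and $D_I\cdot F=1$. The other nonadjacent divisors meeting $F$ (namely $D_{I_1\sqcup I_2}$, $D_{J_1\sqcup J_2}$, $D_{I_3\sqcup\cdots\sqcup I_t}$, $D_{J_3\sqcup\cdots\sqcup J_t}$, and $D_{I_1\sqcup I_2\sqcup J_3\sqcup\cdots\sqcup J_t}$) all have strictly fewer connected components than $t$, which is what makes the induction run; the divisor $D_{I_1\sqcup J_1\sqcup I_2\sqcup J_2}$ drops out because that set is connected. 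Note that even in the $\ovop{M}_{0,6}$ example you cite, the term carrying the intersection with $D_{\{1,3,5\}}$ is $F_{\{5,6,13,24\}}$, whose parts $\{1,3\}$ and $\{2,4\}$ are disconnected --- the example already exhibits the feature your construction is missing. Your surrounding bookkeeping framework (subtract corrections against lower-rank duals, recurse, base case from Proposition \ref{prop:dualFcurve}) is sound and matches the paper's; only the choice of $F$ needs to be replaced.
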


\begin{proof}
If $t(I) = 3$, it is straightforward to check that the dual curve for $D_{I}$ is 
\[
	F_{I_{1}\sqcup I_{2}, J_{1}\sqcup J_{2}, I_{3}, J_{3}}
	+ F_{I_{1},J_{1},I_{2},J_{2}\sqcup I_{3}\sqcup J_{3}}
	+ F_{J_{1},I_{2},J_{2},I_{1}\sqcup I_{3}\sqcup J_{3}}
	- F_{J_{1},I_{2},I_{1}\sqcup J_{3},J_{2}\sqcup I_{3}}.
\]

Consider the F-curve $F_{I_{1}\sqcup I_{2}, J_{1}\sqcup J_{2}, I_{3}\sqcup
  I_{4}\sqcup \dots \sqcup I_{t}, J_{3}\sqcup J_{4}\sqcup \cdots \sqcup
  J_{t}}$. It intersects positively with $D_{I_{1}\sqcup J_{1}\sqcup
  I_{2}\sqcup J_{2}}$, $D_{I}$, $D_{I_{1}\sqcup I_{2}\sqcup J_{3}\sqcup
  J_{4}\sqcup \cdots \sqcup J_{t}}$, and negatively with $D_{I_{1}\sqcup
  I_{2}}$, $D_{J_{1}\sqcup J_{2}}$, $D_{I_{3}\sqcup I_{4}\sqcup \cdots \sqcup
  I_{t}}$, and $D_{J_{3}\sqcup J_{4}\sqcup \cdots \sqcup J_{t}}$. Except
them, all other intersection numbers are zero. Note that $I_{1} \sqcup
J_{1} \sqcup I_{2} \sqcup J_{2}$ is connected so $D_{I_{1}\sqcup J_{1} \sqcup
  I_{2} \sqcup J_{2}} \notin \mathcal{B}$. For all other boundary
divisors above with nonzero intersection numbers, the numbers of
connected components of corresponding subsets of $[n]$ are strictly
less than $t$, because $I_{1}\sqcup J_{t}$ is a connected set. Let
$E_{I}$, $E_{I_{1}\sqcup I_{2}\sqcup J_{3}\sqcup J_{4}\sqcup \cdots \sqcup
  J_{t}}$, $E_{I_{1}\sqcup I_{2}}$, $E_{J_{1}\sqcup J_{2}}$, $E_{I_{3}\sqcup
  I_{4}\sqcup \cdots \sqcup I_{t}}$, and $E_{J_{3}\sqcup J_{4}\sqcup \cdots
  \sqcup J_{t}}$ be the dual elements which are explicit $\ZZ$-linear
combinations of F-curves by the induction hypothesis. Then 
\[
	F_{I_{1}\sqcup I_{2}, J_{1}\sqcup J_{2}, 
	I_{3}\sqcup I_{4}\sqcup \dots \sqcup I_{t}, 
	J_{3}\sqcup J_{4}\sqcup \cdots \sqcup J_{t}}
	- E_{I_{1}\sqcup I_{2}\sqcup J_{3}\sqcup J_{4}\sqcup \cdots \sqcup J_{t}}
\]
\[
	+ E_{I_{1}\sqcup I_{2}} + E_{J_{1}\sqcup J_{2}} + 
	E_{I_{3}\sqcup I_{4}\sqcup \cdots \sqcup I_{t}} + 
	E_{J_{3}\sqcup J_{4}\sqcup \cdots \sqcup J_{t}}
\]
is the dual basis of $D_{I}$.
\end{proof}

\begin{remark}
The rank of $\mathrm{Pic}(\Mzn)_{\QQ}$ is $2^{n-1}-{n \choose 2} -
1$. On the other hand, the number of boundary divisors $D_{I}$ with
$t(I) = 2$ is ${n \choose 4}$. Therefore if $n$ is large, most dual
vectors are not F-curves.
\end{remark}

\subsection{Writing  $C^{\sigma}$ as an effective linear combination of F-curves}

\begin{theorem}\label{thm:cycliceffectivesumofFcurves}
Let $\sigma \in S_{n}$ be a balanced permutation which is the product
of two nontrivial disjoint cycles $\sigma_{1}$ and $\sigma_{2}$. Then
$C^{\sigma}$ can be written as an effective sum of F-curves whose
nonzero coefficients are all one. 
\end{theorem}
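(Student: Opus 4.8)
The plan is to use the forgetful map to reduce to the case $j = 0$, and then to realize $C^\sigma$ explicitly as a curve whose degeneration pattern can be read off directly.

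\medskip

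\noindent\textbf{Proof proposal.}

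First I would reduce to the case where $\sigma$ has no fixed marked points. By Lemma~\ref{lem:Crj-1andCrj}, if $\sigma$ fixes a marked point $i$, then the forgetful map $\pi \colon \Mzn \to \ovop{M}_{0,n-1}$ restricts to an isomorphism $C^\sigma \to C^{\sigma'}$, where $\sigma' = \sigma|_{[n]-\{i\}}$. Since $\pi$ sends F-curves to F-curves or to points (depending on whether forgetting the point collapses a component), an effective sum of F-curves with unit coefficients for $C^{\sigma'}$ on $\ovop{M}_{0,n-1}$ pulls back to an effective sum of F-curves for $C^\sigma$; I would need to check that the pullback of a single F-curve under $\pi^{-1}$, restricted to the relevant subvariety, is again an F-curve with the correct (unit) coefficient, using the section of $\pi$ and the structure of $C^\sigma$ as a chain of $\ov M_{0,S}$'s. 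Iterating this, it suffices to treat the type $(0, r)$ case with $n = 2r$.

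\medskip

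In the type $(0,r)$ case, I would use the description from Lemma~\ref{lem:isomorphictoP1} and its proof: a general point of $C^\sigma$ is $(\PP^1, x_1, \dots, x_n)$ with the two orbits $O_1 = \{x_i : i \in \sigma_1\}$ and $O_2 = \{x_i : i \in \sigma_2\}$ equidistributed around the rotation axis, so up to $\mathrm{PGL}_2$ the configuration is determined by one point on the quotient $\PP^1 \cong \ov M_{0,S}$ where $S$ consists of one point from $\sigma_1$, one from $\sigma_2$, and the two fixed points $0, \infty$ of $\sigma$. Concretely, parametrize $C^\sigma$ by the position of $x_{h} \in O_1$ relative to fixed $0, \infty$; then from Proposition~\ref{prop:intersectionboundary}, the only degenerations are: (1) an orbit point of $O_1$ colliding with an orbit point of $O_2$, giving $r$ points of $C^\sigma$, each lying on an intersection $\bigcap_{0 \le k < r} D_{\{\sigma^k(h), \sigma^k(i)\}}$ of divisors which is exactly the locus of an F-curve of the form $F_{\{\sigma^k(h)\}, \{\sigma^k(i)\}, \dots}$ — more precisely the F-curve $F_{I_1, I_2, J_1, J_2}$ where the four parts are the two remaining arcs and the two single points, so intersection multiplicity one at each such point; and (2) the whole orbit $O_1$ (or $O_2$) colliding to one of the two fixed points, giving $2$ points of $C^\sigma$ each on $D_{\sigma_\ell}$, with $C^\sigma \cdot D_{\sigma_\ell} = 2$. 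I would then exhibit the explicit effective combination: identify the F-curves $F_1, \dots, F_m$ (one for each collision type above, indexed by the cyclic offset between the two orbits and by the two ``total collapse'' directions) and verify $C^\sigma \equiv \sum F_t$ by checking the intersection numbers against the nonadjacent basis agree with Proposition~\ref{prop:intersectionboundary}, using Proposition~\ref{prop:dualFcurve} to compute the relevant pairings.

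\medskip

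The main obstacle I anticipate is the bookkeeping in verifying numerical equivalence rather than mere ``degeneration compatibility'': knowing that $C^\sigma$ degenerates onto a union of F-curves shows the \emph{limit cycle} is that union, but to conclude $C^\sigma \equiv \sum F_t$ as a numerical class I must check equality of intersection numbers with a spanning set of divisors, and for the type $(0,r)$ curve with $n = 2r$ large the nonadjacent-basis computation is delicate because $C^\sigma \cdot D_2 = r^2$ must be distributed correctly among the $D_{\{a,b\}}$ and because some dual curves are not themselves F-curves (Example~\ref{ex:dualofnonadjacentonn=6}). I would handle this by working with the boundary divisors $D_I$ directly (they span $\mathrm N^1$ by \cite{Kee92}) rather than the nonadjacent basis: for each F-curve $F_t$ in the proposed sum, its intersection numbers with all $D_I$ are standard, and I would sum them and match termwise against parts (1)--(7) of Proposition~\ref{prop:intersectionboundary}. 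The symmetry of the $\ZZ/r$-action permuting the F-curves $F_t$ in family (1) should make this matching essentially a single computation repeated $r$ times, so the unit coefficients fall out automatically.
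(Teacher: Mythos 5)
There are two genuine gaps here. First, your reduction to the $j=0$ case does not work as stated. Lemma \ref{lem:Crj-1andCrj} gives an isomorphism $C^{\sigma}\to C^{\sigma'}$ and hence $\pi_{*}[C^{\sigma}]=[C^{\sigma'}]$, but $\pi_{*}$ on $\mathrm{N}_{1}$ has a large kernel (it contracts many F-curves), so an effective F-curve decomposition of $[C^{\sigma'}]$ on $\ovop{M}_{0,n-1}$ does not determine, let alone ``pull back to,'' a decomposition of $[C^{\sigma}]$ on $\Mzn$; there is no canonical section of $\pi$ through $C^{\sigma}$ carrying F-curves to F-curves, and you have not constructed one. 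The reduction is also unnecessary: the paper treats $j=0,1,2$ uniformly.

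Second, and more seriously, your identification of the F-curves in the sum is not correct, and the step that would establish numerical equivalence is exactly the part you defer. The points of $C^{\sigma}$ parametrizing singular curves lie on the codimension-$r$ strata $\bigcap_{0\le k<r}D_{\{\sigma^{k}(h),\sigma^{k}(i)\}}$, which have dimension $r-3$ and are not F-curves for $r\neq 4$; in any case, knowing which boundary strata $C^{\sigma}$ meets does not determine a decomposition of its class into F-curves, and your indexing (one curve per cyclic offset plus two collapse directions) gives on the order of $r+2$ terms where the correct decomposition has $r^{2}-2+j$. The idea that actually closes the argument --- and which appears only as an aside at the end of your proposal --- is to relabel so that $\sigma=(1,\dots,r)(r+1,\dots,2r)$, observe that then \emph{every} $D_{I}$ with $C^{\sigma}\cdot D_{I}\neq 0$ has at most two connected components in the cyclic graph, and expand $[C^{\sigma}]$ in the basis of $\mathrm{H}_{2}$ dual to the nonadjacent basis of Proposition \ref{prop:nonadjacentbasis}. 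The coefficients in that dual basis \emph{are} the intersection numbers with the nonadjacent $D_{I}$, which are all $1$ by Proposition \ref{prop:intersectionboundary} (the divisors with intersection number $2$ are adjacent and so do not appear), and each dual element is an F-curve by Proposition \ref{prop:dualFcurve}. This makes the ``delicate bookkeeping'' you anticipate unnecessary; without it, your plan to verify the equivalence by matching intersection numbers termwise is not a proof but a restatement of what must be shown.
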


\begin{proof}
By applying the left $S_{n}$-action to $S_{n}$, we may
assume that $\sigma = \sigma_{1}\sigma_{2} = (1,2,\cdots, r)(r+1, r+2,
\cdots, 2r)$. Let $S$ be the set of boundary divisors that intersect $C^{\sigma}$ nontrivially. We claim that for any $D_{I} \in S$, $t(I) \le 2$. Indeed, for $D_{I} \in S$ with $|I| = 2$, then $t(I) \le 2$ is obvious. If $|I| > 2$, then $I$ is one of $\sigma_{i}$, which is a connected set, or possibly the union of $\sigma_{i}$ and an element of $[n] - (\sigma_{1} \sqcup \sigma_{2})$. Since $[n]-(\sigma_{1} \sqcup \sigma_{2})$ has at most two elements, the union also has at most two connected components. 

For each nonadjacent $D_{I}\in S$, the dual element is an F-curve by
Lemma \ref{prop:dualFcurve}.   Thus $C^{\sigma}$ is the effective linear combination of dual elements for nonadjacent $D_{I}\in S$. Finally, the intersection number $C^{\sigma}\cdot D_{I}$ is greater than one only if $j = 0$ and $I = \sigma_{i}$. But in this case $I$ is connected so it does not affect the linear combination. 
\end{proof}

\begin{corollary}
The number of F-curves appearing in the effective linear combination
of $C^{\sigma}$ described by Theorem \ref{thm:cycliceffectivesumofFcurves} is $r^{2}-2+j$. 
\end{corollary}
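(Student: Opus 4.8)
The plan is to read the count off the proof of Theorem~\ref{thm:cycliceffectivesumofFcurves}. With the relabeling used there, namely $\sigma=(1,2,\dots,r)(r+1,\dots,2r)$, one writes $C^{\sigma}=\sum_{D_I\in\mathcal{B}}(C^{\sigma}\cdot D_I)\,E_I$, where $\{E_I\}_{D_I\in\mathcal{B}}$ is the basis of $\mathrm{H}_2(\Mzn,\QQ)$ dual to the nonadjacent basis $\mathcal{B}$. That proof establishes that every $D_I$ with $C^{\sigma}\cdot D_I\neq 0$ satisfies $t(I)\le 2$; hence the $D_I\in\mathcal{B}$ occurring with a nonzero coefficient all have $t(I)=2$, so by Proposition~\ref{prop:dualFcurve} each $E_I$ that actually appears is a genuine F-curve, and distinct $D_I$ give distinct basis vectors, hence distinct F-curves. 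Moreover $C^{\sigma}\cdot D_I>1$ only for the connected sets $I=\sigma_1,\sigma_2$, which lie outside $\mathcal{B}$, so every surviving coefficient equals $1$. Therefore the number of F-curves in the combination is exactly
\[
	N:=\#\{\, D_I\in\mathcal{B} : C^{\sigma}\cdot D_I\neq 0 \,\},
\]
and the content of the corollary is the identity $N=r^{2}-2+j$.

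To evaluate $N$, I would list all $D_I$ with $C^{\sigma}\cdot D_I\neq 0$ using Proposition~\ref{prop:intersectionboundary} and then discard the adjacent ones. By items~(1)--(4) of that proposition these divisors are the $r^{2}$ two-element sets $\{h,i\}$ with $h\in\sigma_1$, $i\in\sigma_2$, together with the larger sets coming from the ``collide with a $\sigma$-fixed point'' degenerations: $\sigma_1$ (when $j=0$); $\sigma_1,\sigma_2,\sigma_1\cup\{a\},\sigma_2\cup\{a\}$ (when $j=1$); and $\sigma_1\cup\{a\},\sigma_2\cup\{a\}$ (when $j=2$). The useful simplification is that $D_I=D_{I^{c}}$, and since $n=2r+j$ the complement relations $\sigma_1^{c}=\sigma_2$ (if $j=0$), $\sigma_1^{c}=\sigma_2\cup\{a\}$ (if $j=1$), and $(\sigma_2\cup\{a\})^{c}=\sigma_1\cup\{b\}$ (if $j=2$) collapse this second list to a single divisor when $j=0$ and to exactly two divisors when $j\in\{1,2\}$.

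The last step is a check of adjacency in the cyclic graph $G_n$. Among the $r^{2}$ pairs $\{h,i\}$ the only edges are $\{r,r+1\}$, present for every $j$, and the wrap-around pair $\{1,2r\}$, which is an edge precisely when $j=0$ (for $j\ge 1$ the vertex $2r$ is no longer a cyclic neighbor of $1$); so these contribute $r^{2}-2$ F-curves if $j=0$ and $r^{2}-1$ if $j\in\{1,2\}$. For the second list: $\sigma_1$ and $\sigma_2$ are connected, so they contribute nothing when $j\in\{0,1\}$, whereas when $j=2$ the two divisors are $D_{\sigma_1\cup\{a\}}$ and $D_{\sigma_1\cup\{b\}}$, and exactly one of them is connected — the one adjoining the fixed point that is a cyclic neighbor of the block $\{1,\dots,r\}$ — so this contributes one more F-curve. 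Summing gives $N=r^{2}-2,\ r^{2}-1,\ r^{2}$ for $j=0,1,2$, i.e. $N=r^{2}-2+j$; as a sanity check this reproduces the $n=5$ case ($r=2$, $j=1$, $N=3$). The only delicate point in the whole argument is this bookkeeping — tracking the $D_I=D_{I^{c}}$ identifications and deciding exactly which of the ``boundary'' sets is connected — and it is precisely the wrap-around pair $\{1,n\}$ and the fixed-point adjacencies that make the final answer depend on $j$.
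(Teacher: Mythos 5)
Your proposal is correct and follows essentially the same route as the paper: express $C^{\sigma}$ in the basis dual to the nonadjacent basis, use Proposition \ref{prop:intersectionboundary} to list the boundary divisors meeting $C^{\sigma}$, and discard the adjacent ones ($D_{\{r,r+1\}}$ always, $D_{\{1,2r\}}$ when $j=0$, and the connected sets among $\sigma_{1},\sigma_{2},\sigma_{\ell}\cup\{a\}$). Your bookkeeping of the $D_{I}=D_{I^{c}}$ identifications and your explicit treatment of the $j=2$ case (which the paper only calls ``similar'') are accurate and slightly more detailed than the paper's own proof.
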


\begin{proof}
We may assume that $G = \langle (1,2,\cdots, r)(r+1, r+2, \cdots, 2r)\rangle$. Note that $C^{\sigma}$ intersects $r^{2}+2$ boundary divisors by Proposition \ref{prop:intersectionboundary}. If $j = 0$ (so $n = 2r$), then $C^{\sigma} \cdot D_{\{1,2,\cdots, r\}} = 2$, $C^{\sigma}\cdot D_{\{1,2r\}} = C^{\sigma}\cdot D_{\{r,r+1\}} = 1$ and other intersecting boundaries are all nonadjacent. Thus on the effective linear combination above, $r^{2}-2$ F-curves appear. If $j = 1$ and $n = 2r+1$, then $C^{\sigma}\cdot D_{\{1,2,\cdots, r\}} = C^{\sigma}\cdot D_{\{r+1,r+2,\cdots, 2r\}} = C^{\sigma}\cdot D_{\{r,r+1\}} = 1$ and other intersecting boundaries are nonadjacent so there are $r^{2}-1$ F-curves on the effective linear combination. The case of $j=2$ is similar.
\end{proof}

\begin{example}
Let $\sigma = (12)(34) \in S_{6}$. By Proposition \ref{prop:intersectionboundary}, $C^{\sigma}$ intersects nontrivially 
\[
	D_{\{1,3\}}, D_{\{1,4\}}, D_{\{2,3\}}, D_{\{2,4\}}, D_{\{1,2,5\}}, 
	D_{\{1,2,6\}}
\]
and the intersection numbers are all one. Among these divisors, $D_{\{2,3\}}$ and $D_{\{1,2,6\}}$ are adjacent divisors. Thus 
\[
	C^{\sigma} \equiv 
	F_{\{1,2,3,456\}} + F_{\{1,4,23,56\}} + F_{\{2,3,4,156\}}
	+ F_{\{5,6,12,34\}}.
\]
\end{example}

\begin{example}\label{ex:C30G}
Let $\sigma = (123)(456) \in S_{6}$. Then by using Proposition
\ref{prop:intersectionboundary} and the dual basis from Example \ref{ex:dualofnonadjacentonn=6}, we obtain
\[
	C^{\sigma} \equiv F_{\{1,4,23,56\}}+F_{\{1,5,6,234\}}
	+F_{\{2,3,4,156\}}
\]
\[
	+F_{\{2,5,16,34\}}+F_{\{1,2,6,345\}}
	+F_{\{3,4,5,126\}}+F_{\{3,6,12,45\}}.
\]
\end{example}

\begin{example}
Let $n = 7$ and $\sigma = (123)(456) \in S_{7}$. Then $C^{\sigma}$ intersects 
\[
D_{\{1,2,3\}}, D_{\{4,5,6\}}, D_{\{1,4\}}, D_{\{1,5\}}, D_{\{1,6\}}, D_{\{2,4\}}, D_{\{2,5\}}, D_{\{2,6\}}, D_{\{3,4\}}, D_{\{3,5\}}, D_{\{3,6\}}
\] 
and all intersection  numbers are one. Among them, only $D_{\{1,2,3\}}, D_{\{4,5,6\}}, D_{\{3,4\}}$ are adjacent divisors. Thus 
\[
	C^{\sigma} \equiv F_{\{1,4,23,567\}}+F_{\{1,5,67,234\}}+
	F_{\{1,6,7,2345\}}
\]
\[
	+F_{\{2,3,4,1567\}}+F_{\{2,5,34,167\}}+F_{\{2,6,17,345\}}+
	F_{\{3,4,5,1267\}}+F_{\{3,6,45,127\}}.
\]
\end{example}

\begin{remark}
The numerical class of $\Mzn^G$ often has more symmetry beyond the
input group $G \subset S_n$.
For instance, for the curve above, we computed that the stabilizer of
$[C^{\sigma}]$ has order 72.  See the code samples on the second
author's website for more details.  
\end{remark}
\subsection{Cone of F-curves and $C^{\sigma}$}

We can ask several natural questions about the curve classes
$C^{\sigma}$ which have implications for the birational geometry of
$\Mzn$. Is $C^{\sigma}$ movable? Or as another extremal case, is
$C^{\sigma}$ rigid? A movable curve can be used to compute a facet of
the effective cone. On the other hand, a rigid rational curve on $\Mzn$
could be a candidate for a possible counterexample to the F-conjecture
(\cite[Section 2]{CT12}). 

In this section, we show that $C^{\sigma}$ is always movable and it is
on the boundary of the Mori cone $\overline{\mathrm{NE}}_{1}(\Mzn)$. 

\begin{lemma}\label{lem:degreeofprojection}
Let $\sigma \in S_{n}$ be a balanced permutation with two nontrivial
disjoint cycles. Pick four indexes $S := \{i,j,k,l\} \subset [n]$ and
let $\pi : \Mzn \to \ovop{M}_{0,S} \cong \PP^{1}$ be the
forgetful map. If $S \cap \sigma_{1} = \emptyset$ or
$S\cap\sigma_{2}=\emptyset$, then $\deg \pi_{*}[C^{\sigma}] = 0$.
\end{lemma}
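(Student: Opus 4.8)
The plan is to use the moduli-theoretic description of $C^\sigma$ together with the fact that $\pi_*[C^\sigma]$ is a nonnegative integer, so that it equals the number of points of $C^\sigma$ lying over a general point of $\ovop{M}_{0,S}\cong \PP^1$. First I would recall from the proof of Lemma \ref{lem:irreducible} that a general point of $C^\sigma$ is $(\PP^1, x_1,\dots,x_n)$ where the $n-j$ marked points of order $r$ form $\frac{n-j}{r}=2$ orbits of $\phi_\sigma$, so the configuration is completely determined by the two $\sigma$-fixed points $p,q\in\PP^1$ together with one representative $a$ of $\sigma_1$ and one representative $b$ of $\sigma_2$; once $p,q,a,b$ are chosen the whole tuple is reconstructed by applying powers of $\phi_\sigma$ (a rotation of order $r$). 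Up to the residual $\mathrm{PGL}_2$ action we may as well normalize $p=0$, $q=\infty$, so that $\phi_\sigma$ is $z\mapsto \zeta z$ for a primitive $r$-th root of unity $\zeta$, and $C^\sigma$ is the one-parameter family obtained by letting, say, $a=1$ and $b=t$ vary, $t\in\PP^1\setminus\{\text{finitely many points}\}$.

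The key observation is the following: if $S\cap\sigma_1=\emptyset$, then every one of the four marked points $x_i,x_j,x_k,x_l$ indexed by $S$ is either one of the (at most two) $\sigma$-fixed marked points, or lies in the $\sigma_2$-orbit; in either case its position on $\PP^1$ is one of $0,\infty,\zeta^m b$ for various $m$, hence is a function of $t$ alone that is \emph{linear (in fact monomial) in $t$} — more precisely, after the normalization, each of these four points is either a constant ($0$ or $\infty$) or of the form (constant)$\cdot t$. Therefore the cross-ratio of $x_i,x_j,x_k,x_l$, which is the image point in $\ovop{M}_{0,S}\cong\PP^1$, is a fractional linear function of $t$ of the shape $\frac{\alpha t+\beta}{\gamma t+\delta}$ with $\alpha,\beta,\gamma,\delta$ constants. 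Next I would argue that this Möbius function is actually \emph{constant}: among the four values $\{0,\infty,\zeta^{m_1}b,\zeta^{m_2}b\}$ taken by the $x$'s (with at most two of them of the non-constant type $\zeta^m b$), the cross-ratio built from two constants and up to two points proportional to $t$ is manifestly independent of $t$ — the $t$'s cancel. (If three or four of the four points coincide in their orbit index, $S$ would not give distinct marked points; and if exactly two are proportional to $t$ with distinct proportionality constants $\zeta^{m_1}\ne\zeta^{m_2}$, their ratio is the constant $\zeta^{m_1-m_2}$ and still no $t$ survives.) Hence $\pi|_{C^\sigma}$ is a constant map, so $\pi_*[C^\sigma]=0$. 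The argument when $S\cap\sigma_2=\emptyset$ is identical by symmetry.

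I expect the main obstacle to be the bookkeeping needed to make "the cross-ratio is independent of $t$" airtight in all the combinatorial subcases for how the four indices of $S$ distribute among $F$ (the $\sigma$-fixed marked points, $|F|=j\le 2$) and the single orbit $\sigma_2$ — in particular confirming that one never gets four \emph{distinct} points whose cross-ratio genuinely varies. The clean way to package this, which I would use, is to note that for $S\cap\sigma_1=\emptyset$ the $S$-marked sub-configuration, as a point of $\ovop{M}_{0,S}$, is by construction invariant under the subgroup $\langle\sigma\rangle$ acting through the restriction to $S$; but that restricted action factors through the rotation fixing $0$ and $\infty$, and the four points $0,\infty$ together with $\le 2$ points on a single rotation-orbit always have a cross-ratio fixed by that rotation. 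Equivalently and most cleanly: the composite $C^\sigma\hookrightarrow\ovop{M}_{0,n}\xrightarrow{\ \pi\ }\ovop{M}_{0,S}$ lands in the locus of configurations admitting an automorphism permuting the $S$-marked points according to $\sigma|_S$, and when $S$ misses $\sigma_1$ this forces the configuration into a single $\mathrm{PGL}_2$-orbit (a single point of $\ovop{M}_{0,S}$), since a four-pointed $\PP^1$ with a nontrivial automorphism of order dividing $r$ fixing two of the marked points lies in a zero-dimensional locus. Once that structural statement is in place, $\deg\pi_*[C^\sigma]=0$ is immediate because the image is a point.
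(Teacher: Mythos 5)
Your overall strategy (normalize the $\sigma$-action to $z\mapsto\zeta z$ with fixed points $0,\infty$, parametrize $C^{\sigma}$ by the position $t$ of the $\sigma_{2}$-orbit, and show the cross-ratio of the four $S$-points is independent of $t$) is sound, but the case analysis as written contains a genuine error. You assert that at most two of the four points indexed by $S$ are of the non-constant type $\zeta^{m}t$, and you dismiss the case of three or four such points on the grounds that ``$S$ would not give distinct marked points.'' Both claims are false. The constant points available to $S$ are only the $j\le 2$ \emph{marked} fixed points, so $S\cap\sigma_{1}=\emptyset$ forces at least $4-j$ of the four indices into $\sigma_{2}$; in particular when $j=0$ (i.e.\ $n=2r$) \emph{all four} points of $S$ lie on the moving orbit, and they are perfectly distinct, being $\zeta^{m_{1}}t,\dots,\zeta^{m_{4}}t$ with distinct exponents modulo $r$. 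So the subcase you rule out is not only possible but unavoidable for $j\le 1$. The repair is one line --- the cross-ratio is invariant under the homothety $z\mapsto\lambda z$, which is exactly how varying $t$ acts on the whole configuration $\{0,\infty\}\cup\{\zeta^{m}t\}$, so the image point in $\mathrm{M}_{0,S}$ is constant no matter how many of the four points sit on the orbit --- but your text does not make this argument, and your ``cleanest packaging'' at the end breaks outright in these cases: when $S\subset\sigma_{2}$ the restriction $\sigma|_{S}$ is not a permutation of $S$ (e.g.\ $\sigma_{2}=(6\,7\,8\,9\,10)$ and $S=\{6,7,8,9\}$), so the $S$-subconfiguration is \emph{not} invariant under any induced action, and the ``automorphism forces a zero-dimensional locus'' argument has no content there.

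Once repaired, your route is a legitimate alternative to the paper's, which argues differently: the paper observes that every point of $C^{\sigma}$ (boundary points included, using the explicit degenerations from Proposition \ref{prop:intersectionboundary}) maps under $\pi$ into the interior $\mathrm{M}_{0,S}$, so the map $C^{\sigma}\to\ovop{M}_{0,S}\cong\PP^{1}$ misses the three boundary points, is therefore not surjective, and hence has degree zero. That argument needs no coordinates but does require checking that no nodal limit of $C^{\sigma}$ separates $S$ into two pairs; your computation instead identifies the image point explicitly on the open part and concludes by irreducibility and completeness. Either works, but yours needs the scale-invariance observation inserted and the false distinctness claim removed.
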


\begin{proof}
Note that for a forgetful map $\pi : \Mzn \to \ovop{M}_{0,
  \{i,j,k,l\}}$, the locus of smooth curves maps to the locus of
smooth curves. It is straightforward to check that if $S\cap
\sigma_{1} =\emptyset$ or $S \cap \sigma_{2} = \emptyset$, then each
image of $(X, x_{1}, x_{2}, \cdots, x_{n}) \in C^{\sigma}$ is a smooth
curve in $\ovop{M}_{0, \{i,j,k,l\}}$. Therefore $ C^{\sigma}
\rightarrow \Mzn \rightarrow \ovop{M}_{0,S} $ is not surjective and $\deg \pi_{*}[C^{\sigma}] = 0$.
\end{proof}

\begin{proposition}\label{prop:ontheboundary}
For $n \ge 7$, all $C^{\sigma}$ are on the boundary of the Mori cone
$\overline{\mathrm{NE}}_{1}(\Mzn)$.
\end{proposition}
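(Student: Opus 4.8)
The plan is to exhibit an effective divisor that is nef on a neighborhood of $C^{\sigma}$ in $\overline{\mathrm{NE}}_1(\Mzn)$ yet pairs to zero with $[C^{\sigma}]$, which forces $[C^{\sigma}]$ onto the boundary. The natural candidate is a pullback of an ample class under one of the forgetful maps $\pi_S \colon \Mzn \to \ovop{M}_{0,S} \cong \PP^1$ for a well-chosen four-element subset $S$. By Lemma \ref{lem:degreeofprojection}, if $S$ meets only one of the two nontrivial cycles $\sigma_1,\sigma_2$ — which is possible precisely because $n \ge 7$ forces $r \ge 3$, so each $\sigma_\ell$ has at least three elements and we can even choose $S$ disjoint from $\sigma_2$ while using at most the $j \le 2$ fixed points together with enough of $\sigma_1$ — then $\deg (\pi_S)_*[C^{\sigma}] = 0$, i.e. $C^{\sigma} \cdot \pi_S^*(\mathrm{pt}) = 0$. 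Since $\pi_S^*\mathcal{O}_{\PP^1}(1)$ is the pullback of an ample line bundle under a surjective morphism, it is nef on $\Mzn$, hence nonnegative on all of $\overline{\mathrm{NE}}_1(\Mzn)$. An effective curve class lying on the zero locus of a nonzero nef class must lie on the boundary of the Mori cone: if $[C^{\sigma}]$ were in the interior, every nonzero nef class would pair strictly positively with it.

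Concretely, first I would verify that $S$ can be chosen with $|S| = 4$, $S \subset [n] \setminus \sigma_2$: when $j = 0$ we have $n = 2r$ and $|\sigma_1| = r \ge 3$; if $r \ge 4$ take $S \subset \sigma_1$, and if $r = 3$ (so $n = 6$) this case is excluded by the hypothesis $n \ge 7$, so in fact $r \ge 4$ whenever $j = 0$ and $n \ge 7$. When $j = 1$ we have $n = 2r+1$ with the single fixed point $a$; take three elements of $\sigma_1$ together with $a$. When $j = 2$, take two elements of $\sigma_1$ together with the two fixed points. In every case $S \cap \sigma_2 = \emptyset$ and $|S| = 4$, so Lemma \ref{lem:degreeofprojection} applies. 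Second, I would record that $L := \pi_S^*\mathcal{O}_{\PP^1}(1)$ is nef and nonzero (it has positive degree on any F-curve $F_{A_1,A_2,A_3,A_4}$ for which $S$ is not contained in a single $A_i$, so $L \not\equiv 0$), and that $C^{\sigma} \cdot L = 0$. Third, I would conclude formally: the half-space $\{ z : z \cdot L \ge 0\}$ contains $\overline{\mathrm{NE}}_1(\Mzn)$ and its bounding hyperplane $\{z \cdot L = 0\}$ contains $[C^{\sigma}]$, so $[C^{\sigma}] \in \partial \overline{\mathrm{NE}}_1(\Mzn)$.

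The main obstacle is purely bookkeeping: making sure a four-element $S$ meeting only one cycle genuinely exists for every admissible $(j,r)$ with $n \ge 7$ — the delicate corner being $j = 0$, where one must use that $n = 2r \ge 7$ already forces $r \ge 4$ so that $\sigma_1$ alone supplies four indices (the value $r = 3$, $n = 6$ being exactly where the statement's hypothesis $n \ge 7$ is needed). Beyond that, the argument is soft: it only uses that a pullback of an ample class along a surjection is a nonzero nef class and that such a class separates the interior of the Mori cone from its boundary.
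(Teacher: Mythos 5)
Your proof is correct and follows essentially the same route as the paper: both hinge on Lemma \ref{lem:degreeofprojection} applied to a forgetful map $\pi_S \colon \Mzn \to \ovop{M}_{0,S}$ with $S$ meeting only one of the two nontrivial cycles, which exists because $n \ge 7$. The only difference is cosmetic: you phrase the final step dually, via the nonzero nef class $\pi_S^*\mathcal{O}_{\PP^1}(1)$ pairing to zero with $[C^{\sigma}]$, whereas the paper shows that the F-curve splitting $S$ into four singletons cannot appear in any effective decomposition of $[C^{\sigma}]$ --- these are equivalent formulations of the same separation argument.
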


\begin{proof}
Suppose that $\sigma = \sigma_{1}\sigma_{2}$. Then one of $\sigma_{1}$, $[n] - \sigma_{1}$ has four or more elements. If you take four of them and denote the set of them by $S = \{i,j,k,l\}$, then by Lemma \ref{lem:degreeofprojection}, $\pi_{*}[C^{\sigma}] = 0$ for the projection $\pi : \Mzn \to \ovop{M}_{0,S}$. 

Now assume that there is an effective linear combination
\[
	C^{\sigma} \equiv \sum_{I}x_{I}E_{I}
\]
for some effective curves $E_{I}$ and $x_{I} \in \QQ_{+}$. For the projection $\pi: \Mzn \to \ovop{M}_{0,S}$, an F-curve maps to a point if at least two of $i, j, k, l$ are on the same partition. Also an F-curve maps to isomorphically onto $\ovop{M}_{0,S}$ if none of $i, j, k, l$ are on the same partition. Thus if we take an F-curve $F_{J}$ with a partition splits $S$ into four singleton sets, then $\pi_{*}[F_{J}] = [\ovop{M}_{0,S}]$. Therefore $x_{J}$ must be zero since $\pi_{*}[C^{\sigma}] = 0$. In other words, for \emph{any} effective linear combination of effective-curves, $F_{J}$ does not appear. This implies that $C^{\sigma}$ is on a facet which is disjoint from the ray generated by $F_{J}$. Therefore $C^{\sigma}$ is on the boundary of $\overline{\mathrm{NE}}_{1}(\Mzn)$.
\end{proof}

\begin{remark}
By using the same argument, we can show that for $\sigma = (12)(34) \in S_{6}$, $C^{\sigma}$ is on the boundary of $\overline{\mathrm{NE}}_{1}(\ovop{M}_{0,6})$. 
\end{remark}

\begin{theorem}\label{thm:cycliccasemovable}
For $n \ge 7$, all $C^{\sigma}$ are movable. 
\end{theorem}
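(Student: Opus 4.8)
The plan is to show that $C^{\sigma}$ is movable by exhibiting it as a member of a family of irreducible curves sweeping out a dense open subset of $\Mzn$. The natural candidate for such a family is the $S_n$-orbit (or even the whole translate family) of $C^{\sigma}$: since $\Mzn^{\langle\sigma\rangle}$ sweeps out the locus of configurations with a $\langle\sigma\rangle$-symmetry, a single curve $C^{\sigma}$ does not cover $\Mzn$, but its $\mathrm{PGL}_2$- and $S_n$-translates might. More precisely, I would recall that a curve class is movable if it lies in the closure of the cone generated by classes of irreducible curves that move in a family covering $\Mzn$, equivalently if it intersects every effective divisor nonnegatively; so one route is to verify $C^{\sigma}\cdot E \ge 0$ for every irreducible effective divisor $E$. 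Because the boundary divisors are not a nef-generating set, the cleaner route is the sweeping-family argument.

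First I would construct the covering family explicitly. Using Lemma \ref{lem:irreducible}, a point of $C^{\sigma}$ is a choice of $\frac{n-j}{r}+2$ distinct points on $\PP^1$ (two of them being the $\sigma$-fixed points, the rest indexing the marked orbits), modulo $\mathrm{PGL}_2$. The key observation is that the assignment sending a configuration to the underlying $\langle\sigma\rangle$-symmetric configuration is not rigid: we may vary which $\mathrm{PGL}_2$-conjugate of $\langle\sigma\rangle$ we use, i.e. where the pivotal axis sits. This gives a family parametrized (roughly) by the choice of conjugate, which is positive-dimensional. I would then argue that as we vary both the conjugating element in $\mathrm{PGL}_2$ and the positions of the orbit-points, the resulting curves cover a dense open subset of $\Mzn$ — a general stable $n$-pointed rational curve lies on some such translated $C^{\sigma}$ provided we have enough free parameters, and the count $\dim\Mzn = n-3$ versus (dimension of $C^{\sigma}$, which is $1$) plus (dimension of the translation family) should match once $n \ge 7$. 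Then I would invoke the standard fact that a curve moving in a family that dominates $X$ has movable class.

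Alternatively — and this is probably the intended proof, given the placement right after Proposition \ref{prop:ontheboundary} — I would combine Theorem \ref{thm:cycliceffectivesumofFcurves} with the observation that each F-curve appearing is movable, and that a sum of movable curve classes is movable. An F-curve $F_{A_1,A_2,A_3,A_4}$ is movable precisely when it moves in a covering family; on $\Mzn$ the F-curve corresponding to a partition with all four parts nonempty does move, because the four attached components each have moduli (the $\PP^1$ with marked points from $A_i$ plus one node can be varied). So the steps would be: (i) recall that the nonzero coefficients in the expansion from Theorem \ref{thm:cycliceffectivesumofFcurves} are all $1$ and positive; (ii) check that every F-curve occurring is one of the movable F-curves (each part $A_i$ of the partition has at least one marked point, and at least two of the parts have $\ge 2$ marked points, so the F-curve genuinely deforms and covers $\Mzn$ — this uses $n\ge 7$ to rule out the small exceptional partitions); (iii) conclude that $C^{\sigma}$, being a positive sum of movable classes, is movable, i.e. lies in $\ovop{\mathrm{Mov}}_1(\Mzn)$.

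The main obstacle is step (ii): I need to verify that \emph{no} F-curve in the expansion is of the "rigid" type — concretely, that for the partition $\{I_1, I_2, J_1, J_2\}$ dual to a nonadjacent $D_I$ with $C^\sigma\cdot D_I \ne 0$, each block is nonempty and the F-curve moves. From Proposition \ref{prop:intersectionboundary} the relevant $I$ are either a $2$-element set $\{h,i\}$ with $h\in\sigma_1$, $i\in\sigma_2$, or (when $j>0$) sets like $\sigma_\ell\cup\{a\}$; I would trace through the resulting partitions of $[n]$ and check nonemptiness of all four blocks using $n\ge 7$ (so $r\ge 3$ and the blocks are large enough). This case analysis is routine but is where the hypothesis $n \ge 7$ is actually consumed, paralleling its use in Proposition \ref{prop:ontheboundary}. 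Once that is in hand, the conclusion is immediate from the convexity of the movable cone.
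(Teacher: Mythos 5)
Both routes you propose have a genuine gap, and the second one fails outright. The claim in your steps (ii)--(iii) that the F-curves appearing in Theorem \ref{thm:cycliceffectivesumofFcurves} are movable is false: for $n\ge 5$ every partition $A_1\sqcup A_2\sqcup A_3\sqcup A_4=[n]$ has some part with $|A_i|\ge 2$, and then $F_{A_1,A_2,A_3,A_4}\cdot D_{A_i}=-1$ with $D_{A_i}$ an irreducible effective divisor, so this F-curve class cannot lie in the movable cone (a class in a covering family meets every effective divisor nonnegatively); equally, the F-curves of a fixed combinatorial type sweep out only a boundary stratum, never a dense subset of $\Mzn$. Hence an effective sum of F-curves carries no movability information, and no case analysis on the partitions can rescue step (ii) --- there are no ``movable F-curves'' to land on. Your first route is closer in spirit to the paper, but the proposed family is illusory: conjugating the representation $\langle\sigma\rangle\to\mathrm{PGL}_2$ does not move $C^{\sigma}$ inside $\Mzn$, because points of $\Mzn$ are configurations modulo $\mathrm{PGL}_2$ and $C^{\sigma}$ is defined intrinsically as a fixed locus, while the $S_n$-translates form only a finite set of curves. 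So the ``translate family'' is one-dimensional, not $(n-3)$-dimensional, and the dimension count you hope will ``match once $n\ge 7$'' does not materialize.

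The paper's proof produces the covering family differently: one deforms $C^{\sigma}$ by breaking the symmetry of the two complementary sub-configurations rather than by translating the symmetric one. Writing $\sigma=\sigma_1\sigma_2$, Lemma \ref{lem:degreeofprojection} shows that $C^{\sigma}$ lies in a fiber of the surjection $\pi:\Mzn\to\ovop{M}_{0,[n]-\sigma_{1}}\times\ovop{M}_{0,[n]-\sigma_{2}}$, whose general fiber has dimension $3-j$. When $j=2$ the general fiber is a curve, $C^{\sigma}$ is identified with the entire (irreducible) fiber over the pair of invariant points $(p,q)$, and the family of all fibers covers $\Mzn$. When $j<2$ one lifts $C^{\sigma}$ to $\ovop{M}_{0,n+2-j}$ via Lemma \ref{lem:Crj-1andCrj}, applies the $j=2$ case there, and pushes the covering family forward along the forgetful map. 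You would need this fiber family (or some other genuinely sweeping family) to complete the argument.
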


\begin{proof}
Let $\sigma = \sigma_{1}\sigma_{2}$ and $G = \langle\sigma \rangle$. Also Suppose that the length of $\sigma_{i}$ is $r$. By Lemma \ref{lem:degreeofprojection}, for two projections $\pi_{1}:\Mzn \to \ovop{M}_{0,[n]-\sigma_{1}}$ and $\pi_{2}:\Mzn \to \ovop{M}_{0,[n]-\sigma_{2}}$, $C^{\sigma}$ is contained in a fiber. Moreover, $\pi : \Mzn \to \ovop{M}_{0,[n]-\sigma_{1}} \times \ovop{M}_{0,[n]-\sigma_{2}}$ is surjective, because $|([n]-\sigma_{1})\cap ([n]-\sigma_{2})| = j \le 2$. Note that the dimension of a general fiber is $(n-3)-2(n-r-3) = 2r - n+3 = 3-j$. 

On the interior of $\ovop{M}_{0,[n]-\sigma_{1}}$, there exists a unique point $p$ which is invariant with respect to $\langle \sigma_{2}\rangle$-action. Indeed, for a $\PP^{1}$ with specific coordinates, choose two pivotal points determining the rotational axis, and another point for one of marked points in $\sigma_{2}$. Then all other marked points for the curve parametrized by $p$ are determined by the $\sigma_{2}$-action. Thus there is a three dimensional moduli and if we apply $\mathrm{PGL}_{2}$-action, then we obtain a point. Similarly, we have a $\langle \sigma_{1}\rangle$-invariant point $q$ on $\ovop{M}_{0,[n]-\sigma_{2}}$. If we regard the induced $G = \langle\sigma\rangle$-action on $\ovop{M}_{0,[n]-\sigma_{1}}\times \ovop{M}_{0, [n]-\sigma_{2}}$, then $\pi : \Mzn \to \ovop{M}_{0,[n]-\sigma_{1}}\times \ovop{M}_{0, [n]-\sigma_{2}}$ is $G$-equivariant and $C^{\sigma}$ should be in the fiber $\pi^{-1}(p, q)$.

If $j = 2$, a general fiber is a curve. And the special fiber $\pi^{-1}(p, q)$ is an irreducible curve, because a general point of it is determined by the cross ratio of four marked points $x_{1}, x_{2}, x_{3}, x_{4}$ where $x_{1} \in \sigma_{1}$, $x_{2} \in \sigma_{2}$ and $x_{3}, x_{4}$ are two fixed points. So $C^{\sigma} = \pi^{-1}(p, q)$. Therefore $C^{\sigma}$ is movable. 

Finally, if $j < 2$, then $C^{\sigma}$ is the image of $C^{\sigma'}$ for $\rho : \ovop{M}_{0,n+2-j} \to \Mzn$ by Lemma \ref{lem:Crj-1andCrj}. Note that $C^{\sigma'}$ is in an algebraic family $\mathcal{C}$ covers $\ovop{M}_{0,n+2-j}$, by above observation. By composing with $\rho$, we obtain a family $\mathcal{C'}$ containing $C^{\sigma}$ and covers $\Mzn$. Thus $C^{\sigma}$ is movable, too.
\end{proof}

\section{Dihedral group cases}\label{sec:dihedral}

Next we will discuss the case where $G$ is a dihedral
group. Surprisingly, the geometry of $\Mzn^{G}$ when $G$ is a dihedral
group is very different from the geometry of $\Mzn^{G}$ when $G$ is a
cyclic group.

Let $G$ be a subgroup of $S_{n}$ which is isomorphic to $D_{k}$ with
$k \ge 3$. Then, up to conjugation, $G$ acts on $\PP^{1} \cong S^{2}$,
as the symmetry group of a bipyramid over a regular $k$-gon. There is
a unique orbit (corresponding to two pivotal points) of order two,
there are two orbits of order $k$, and all other orbits have order $2k$. 

Since $G$ permutes marked points, if a point $x$ on $\PP^{1}$ is a marked point, then all points in $G\cdot x$ are marked points, too. So if we have $(\PP^{1}, x_{1}, x_{2}, \cdots, x_{n}) \in \Mzn^{G}$, we have a partition of 
$[n]$ into subsets of size $2$, $k$, and $2k$. The number of size $2$ orbits is 0 or 1. The number of size $k$ orbits is at most two. The dimension of the stratum is the number of orbits of order $2k$ because to fix a $D_{k}$-action on $\PP^{1}$, we need to pick three points already, which are two pivotal points and a point of order $k$. 

Define $G$-invariant families of $n$-pointed $\PP^{1}$ as
following. Fix a coordinate on $\PP^{1}$ and fix a $G$-action on
$\PP^{1}\cong S^{2}$ as a rotation group of a bipyramid over a regular
$k$-gon. Take a generic orbit $O$ of order $2k$,  specify the number
of \emph{special orbits} (orbits of order 2 and $k$), and assign 
marked points to some of these orbits. Then we have an element of
$\op{M}_{0,n}$. By varying $O$, we obtain a rational map $f: \PP^{1}
\dashrightarrow \Mzn$. Since $\PP^{1}$ is a curve, this rational map
can be extended to all of $\PP^{1}$. $\Mzn^{G}$ is the image of $f$.

\begin{remark}
If there are no special marked orbits, i.e., if we choose a general orbit only, then the one-dimensional family over $\PP^{1}$ obtained by varying the orbit of order $2k$ is a $(2:1)$ cover of a rational curve $\Mzn^{G}$ on $\Mzn$.
\end{remark}

In summary, for a dihedral group $G \cong D_{k}$, $\Mzn^{G}$ is a
curve if there exists a unique $G$-orbit of order $2k$. There might be
an extra marked orbit of order 2 and at most two marked orbits of order $k$. By an
argument similar to that of  Lemma \ref{lem:isomorphictoP1}, we obtain that $\Mzn^{G}$ is isomorphic to $\PP^{1}$. Thus we get the following result. 

\begin{lemma} \label{lem:dihedralcurvesnk}
Let $G \cong D_{k}$. $\Mzn^{G}$ is a curve on $\Mzn$ only if $n = 2k, 2k+2, 3k, 3k+2, 4k$, and $4k+2$.\end{lemma}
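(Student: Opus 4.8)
The plan is to combine two ingredients, both essentially in place already: the list of orbit types of a copy of $D_k$ acting on $\PP^1$, and the dimension count for $\Mzn^G$ sketched in the paragraphs above. First I would record that if $\Mzn^G$ is a curve then it is in particular nonempty, so a general point $(\PP^1,x_1,\ldots,x_n)$ of it yields a faithful representation $G\hookrightarrow\mathrm{PGL}_2$ with image $D_k$, and the marked points form a disjoint union of $G$-orbits on $\PP^1$. The orbits of $D_k\subset\mathrm{PGL}_2$ on $\PP^1$ have sizes exactly $2$, $k$, and $2k$: there is a single orbit of size $2$ (the two pivotal points, fixed by the rotation subgroup), exactly two orbits of size $k$ (the ``vertices'' and the ``edge midpoints'' of the $k$-gon on the equator, each point having a stabilizer of order $2$ generated by a reflection), and every remaining orbit has size $2k$. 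Writing $a$ for the number of marked orbits of size $2$, $b$ for the number of marked orbits of size $k$, and $m$ for the number of marked orbits of size $2k$, we obtain $a\in\{0,1\}$, $b\in\{0,1,2\}$, $m\ge 0$, and
\[
	n=2a+kb+2km.
\]

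Next I would invoke the dimension count: fixing the $D_k$-action on $\PP^1$ amounts to prescribing the two pivotal points and one point of order $k$, which exhausts all of $\mathrm{PGL}_2$, so the remaining moduli of a $G$-fixed configuration are precisely the choices of the $m$ marked generic orbits, one point of $\PP^1$ apiece; hence $\dim\Mzn^G=m$. Since $\Mzn^G$ is assumed to be a curve, $m=1$, and substituting into the displayed equation while running over the six possibilities for $(a,b)\in\{0,1\}\times\{0,1,2\}$ gives
\[
	n\in\{\, 2k,\ 2k+2,\ 3k,\ 3k+2,\ 4k,\ 4k+2 \,\},
\]
which is exactly the assertion; the six values correspond respectively to $(a,b)=(0,0),(1,0),(0,1),(1,1),(0,2),(1,2)$.

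The one step that deserves care is the equality $\dim\Mzn^G=m$, and in particular its upper bound: one must check that distinct marked generic orbits contribute independent moduli and that nothing collapses once the $D_k$-action is fixed. This rests on the fact that for $k\ge 3$ the normalizer of $D_k$ in $\mathrm{PGL}_2$ is finite (it is $D_{2k}$), so no positive-dimensional automorphism survives to absorb the choice of orbits; this is also exactly where the hypothesis $k\ge 3$ is used, since for $k=2$ the normalizer is $S_4$ and the count would be different. The reverse inequality $\dim\Mzn^G\ge m$ is clear, as varying a single generic orbit already produces a one-parameter family. I do not expect any genuine obstacle beyond this bookkeeping; note also that the lemma only asserts a necessary condition, so I would not try to prove conversely that each of these six values of $n$ really yields a curve.
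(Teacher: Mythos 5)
Your argument is correct and is essentially the paper's own: the text preceding the lemma records exactly the same orbit structure of $D_{k}\subset\mathrm{PGL}_{2}$ (one orbit of size $2$, two of size $k$, the rest of size $2k$) and the same dimension count (the dimension equals the number of marked orbits of order $2k$, since fixing the action uses up the two pivotal points and one point of order $k$), and then reads off the six values of $n$ from $m=1$, $a\in\{0,1\}$, $b\in\{0,1,2\}$. Your extra remark justifying $\dim\Mzn^{G}=m$ via the finiteness of the normalizer of $D_{k}$ in $\mathrm{PGL}_{2}$ is a welcome sharpening of a point the paper leaves implicit, but it does not change the argument.
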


\begin{definition}
For a dihedral group $G \cong D_{k} \subset S_{n}$, suppose that
$\Mzn^{G}$ is a curve. We say that the $G$ action is \emph{of type $(a,b)$} if $a$ is the number of order two marked orbits and $b$ is the number of order $k$ special orbits. So $0 \le a \le 1$ and $0 \le b \le 2$.
\end{definition}

If there is no orbit of order $k$ consisting of marked points, then $\Mzn^{G}$ is a curve we have already described:

\begin{lemma} \label{lem:whendihedraliscyclic}
Let $G \cong D_{k}$ such that $\Mzn^{G}$ is a curve. Let $\sigma \in
G$ have order $k$. If the $G$ action is of type $(a, 0)$, then $\Mzn^{G} = C^{\sigma}$. 
\end{lemma}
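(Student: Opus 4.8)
The plan is to show that a type $(a,0)$ dihedral configuration is, as a point of $\mathrm{M}_{0,n}$, also fixed by the cyclic subgroup $\langle\sigma\rangle \le G$, and conversely that every point of the cyclic curve $C^\sigma$ extends (uniquely) to a $D_k$-fixed configuration; combined with the already-established facts that both $\Mzn^G$ and $C^\sigma$ are irreducible curves isomorphic to $\PP^1$ (Lemma \ref{lem:isomorphictoP1} and the surrounding discussion), this forces $\Mzn^G = C^\sigma$ as subvarieties of $\Mzn$.

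First I would set up notation: pick $\sigma \in G$ of order $k$, so that $\sigma$ is a rotation of $\PP^1 \cong S^2$ about the pivotal axis by $2\pi/k$, and the second generator $\tau$ of $D_k$ is an order-two rotation swapping the two pivotal points. Since the $G$-action is of type $(a,0)$, the marked points consist of some orbits of order $2k$ together with, if $a = 1$, one order-two orbit (the two pivotal points); there are no marked orbits of order $k$. The key combinatorial observation is that an orbit of order $2k$ for $D_k$ is a single orbit of order $2k$ for $\langle\sigma\rangle$ only when... wait — no: $\langle\sigma\rangle$ has order $k$, so a $D_k$-orbit of size $2k$ breaks into \emph{two} $\langle\sigma\rangle$-orbits of size $k$ (the two pivotal points being the only $\sigma$-fixed points, and the order-two orbit being $\sigma$-invariant as a set). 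So $\sigma$, viewed as a permutation of $[n]$, is a product of disjoint $k$-cycles — two for each order-$2k$ orbit of marked points — with $a$ fixed marked points; this $\sigma$ is balanced in the sense of Definition \ref{def:balanced}, and the number of $\sigma$-fixed points is $a \le 2$. Then I would observe that the dimension count: a type $(a,0)$ curve $\Mzn^G$ has exactly one order-$2k$ marked orbit (so $n = 2k + a$ when $b=0$, matching Lemma \ref{lem:dihedralcurvesnk}), hence $\sigma$ has exactly two nontrivial $k$-cycles, so $\dim \Mzn^{\langle\sigma\rangle} = \frac{n - a}{k} - 1 = 1$ by Lemma \ref{lem:irreducible}, and $C^\sigma = \Mzn^{\langle\sigma\rangle}$ is defined.

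Next, the containment $\Mzn^G \subseteq \Mzn^{\langle\sigma\rangle}$: any configuration $(\PP^1, x_1, \dots, x_n)$ lying in the family $f(\PP^1)$ defining $\Mzn^G$ is by construction fixed by the whole of $G$, in particular by $\sigma$; since such configurations generically lie in the interior $\mathrm{M}_{0,n}$, the image $\Mzn^G$ (an irreducible curve meeting $\mathrm{M}_{0,n}$) is contained in the $\langle\sigma\rangle$-fixed locus and meets $\mathrm{M}_{0,n}$, hence lies in $\Mzn^{\langle\sigma\rangle}$, and in fact in its unique component $C^\sigma$ (irreducible by Lemma \ref{lem:irreducible}(2)). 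Both $\Mzn^G$ and $C^\sigma$ are irreducible complete curves, and $\Mzn^G \subseteq C^\sigma$, so they are equal. I would double-check the reverse direction is not actually needed — once we know $\Mzn^G$ is a curve contained in the irreducible curve $C^\sigma$, equality is automatic — but it is reassuring to note it holds: a generic point of $C^\sigma$ is a $\PP^1$ with two $\sigma$-fixed points and two $\sigma$-orbits of size $k$ among the marked points, and the unique involution of $\PP^1$ swapping the two $\sigma$-fixed points while normalizing $\langle\sigma\rangle$ then matches up the two $k$-orbits appropriately (reflecting the $k$-gon), so the configuration is genuinely $D_k$-fixed, placing it in $\Mzn^G$.

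The main obstacle I anticipate is bookkeeping around the order-$k$ special orbits that are \emph{allowed} to exist as non-marked points even when $b = 0$: the definition of type $(a,b)$ counts order-$k$ \emph{marked} orbits, and there may be up to two order-$k$ $G$-orbits of non-marked points. I need to confirm these do not interfere — they simply sit on $\PP^1$ as additional $\sigma$-periodic (non-fixed) non-marked points and play no role in the moduli point, so the identification of $\sigma|_{[n]}$ as a product of two $k$-cycles plus $a$ fixed points is unaffected. A secondary subtlety is verifying that $\Mzn^G$ as defined via the map $f$ (the image of $\PP^1$) really is the union-of-interior-meeting-components object of the earlier definition; this follows from the discussion preceding Lemma \ref{lem:dihedralcurvesnk}, where $f$ is shown to extend over all of $\PP^1$ and its image is identified with $\Mzn^G$. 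With these points checked, the argument is short.
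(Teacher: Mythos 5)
Your proposal is correct and follows essentially the same route as the paper, whose entire proof is the one-line observation that $\Mzn^{G} \subset \Mzn^{\langle\sigma\rangle} = C^{\sigma}$ and that both are irreducible curves, hence equal. The only quibble is a bookkeeping slip in your dimension count: when $a=1$ the order-two orbit contributes \emph{two} $\sigma$-fixed marked points, so $n = 2k+2a$ and the number of trivial cycles in Lemma \ref{lem:irreducible} is $j=2a$ rather than $a$ — the two errors cancel and the dimension is still $1$, so the argument stands.
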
 

\begin{proof}
It is clear that $\Mzn^{G} \subset \Mzn^{\langle \sigma\rangle} =
C^{\sigma}$, and both of them are irreducible curves, so $\Mzn^{G} = C^{\sigma}$. 
\end{proof}

\begin{example}\label{ex:k=3C_00}
The simplest case is $G \cong D_{3}$ and $n = 6$. Let $G = \langle (123)(456), (14)(26)(35)\rangle$. Then $G$ is of type $(0, 0)$ and it is the rotation group of a triangular prism inscribed in $S^{2}$ whose top vertices are $1,2,3$ and whose bottom vertices are $4,5,6$ in the same order. 
\end{example}

To compute the numerical class of $\Mzn^{G}$, we need to compute the intersection numbers with boundary divisors. A point configuration on a general point of $\Mzn^{G}$ degenerates if the `moving' orbit of order $2k$ approaches a special orbit (an orbit of order $2$ or $k$). Note that special orbits might not consist of marked points. 

\begin{proposition}\label{prop:intersectiondihedral}
Let $G \subset S_{n}$ be a finite group isomorphic to $D_{k}$ for $k \ge 3$. Suppose that $\Mzn^{G}$ is a curve. Let $\sigma \in G$ be an order $k$ element.
\begin{enumerate}
\item If the $G$ action is of type $(a,0)$, then $\sigma$ is a product of two disjoint cycles $\sigma_{1}$ and $\sigma_{2}$ of length $k$. 
\begin{enumerate}
\item If $a = 0$, 
\[
	D_{I} \cdot \Mzn^{G} = \begin{cases}2, & I = \sigma_{1} \mbox{ or }
	I = \sigma_{2},\\
	1, & I = \{i, j\} \mbox{ where } i \in \sigma_{1}, j \in \sigma_{2},\\
	0, & \mbox{otherwise.}\end{cases}
\]
\item If $a = 1$, let $\ell$ be one of two marked points in the orbit of order two. Then:
\[
	D_{I} \cdot \Mzn^{G} = \begin{cases}
	1, & I = \sigma_{1} \cup \{\ell\} \mbox{ or } \sigma_{2} \cup \{\ell\},\\
	1, & I = \{i,j\}, i \in \sigma_{1}, j \in \sigma_{2},\\
	0, & \mbox{otherwise.}
	\end{cases}
\]
\end{enumerate}
\item If the $G$ action is of type $(a,1)$, then $\sigma$ is a product of three disjoint cycles $\sigma_{1}, \sigma_{2}, \sigma_{3}$ of order $k$. If we take a reflection $\tau \in G - \langle \sigma \rangle$, then exactly one of them (say $\sigma_{3}$) is invariant for $\tau$-action. Furthermore, we are able to take $\tau$, such that there is $m \in \sigma_{3}$ so that $\tau(m) = m$. 
\begin{enumerate}
\item If $a = 0$ and $k$ is odd, then:
\[
	D_{I} \cdot \Mzn^{G} = \begin{cases}2, & I = \sigma_{1} \mbox{ or } 
	I = \sigma_{2},\\
	1, & I = \{i, j\} \mbox{ where } i \in \sigma_{1}, j \in \sigma_{2},\\
	1, & I = \{\sigma^{t}(i), \sigma^{t}(\tau(i)), \sigma^{t}(m)\} \mbox{ where } 
	i \in \sigma_{1}, 0 \le t < k,\\
	0, & \mbox{otherwise.}
	\end{cases}
\]
\item If $a = 0$ and $k$ is even, then:
\[
	D_{I} \cdot \Mzn^{G} = \begin{cases}2, & I = \sigma_{1} \mbox{ or } 
	I = \sigma_{2},\\
	2, & I = \{i, \sigma^{2t+1}(\tau(i))\} \mbox{ where } i \in \sigma_{1}, 
	0 \le t < k/2,\\
	1, & I = \{\sigma^{t}(i), \sigma^{t}(\tau(i)), \sigma^{t}(m)\} \mbox{ where } 
	i \in \sigma_{1}, 0 \le t < k,\\
	0, & \mbox{otherwise.}
	\end{cases}
\]
\item If $a = 1$ and $k$ is odd, pick $\ell$ with order two. Then we have:
\[
	D_{I} \cdot \Mzn^{G} = \begin{cases}1, & I = \sigma_{1} \cup \{\ell\} \mbox{ or }
	\sigma_{2} \cup \{\ell\},\\
	1, & I = \{i, j\} \mbox{ where } i \in \sigma_{1}, j \in \sigma_{2},\\
	1, & I = \{\sigma^{t}(i), \sigma^{t}(\tau(i)), \sigma^{t}(m)\}, i \in \sigma_{1}, 
	0 \le t < k,\\
	0, & \mbox{otherwise.}
	\end{cases}
\]
\item If $a = 1$ and $k$ is even, pick $\ell$ with order two. Then we have:
\[
	D_{I} \cdot \Mzn^{G} = \begin{cases}1, & I = \sigma_{1} \cup \{\ell\} \mbox{ or }
	\sigma_{2} \cup \{\ell\},\\
	2, & I = \{i, \sigma^{2t+1}(\tau(i))\} \mbox{ where } i \in \sigma_{1}, 
	0 \le t < k/2,\\
	1, & I = \{\sigma^{t}(i), \sigma^{t}(\tau(i)), \sigma^{t}(m)\}, i \in \sigma_{1}, 
	0 \le t < k,\\
	0, & \mbox{otherwise.}
	\end{cases}
\]
\end{enumerate}
\item If the $G$ action is of type $(a,2)$, then $\sigma$ is a product of four disjoint cycles $\sigma_{a}$, $1 \le a \le 4$ of order $k$. For any reflection $\tau \in G - \langle \sigma \rangle$, two of $\sigma_{a}$'s (say $\sigma_{3}$ and $\sigma_{4}$) are $\tau$-invariant. Furthermore, by taking appropriate reflections $\tau_{1}, \tau_{2}$, we may assume that there is $m \in \sigma_{3}$ and $n \in \sigma_{4}$ such that $\tau_{1}(m)  = m$ and $\tau_{2}(n) = n$. 
\begin{enumerate}
\item If $a = 0$, 
\[
	D_{I} \cdot \Mzn^{G} = \begin{cases}2, & I = \sigma_{1} \mbox{ or } 
	I = \sigma_{2},\\
	1, & I = \{\sigma^{t}(i), \sigma^{t}(\tau_{1}(i)), \sigma^{t}(m)\} \mbox{ where }
	i \in \sigma_{1}, 0 \le t < k,\\
	1, & I = \{\sigma^{t}(i), \sigma^{t}(\tau_{2}(i)), \sigma^{t}(n)\} \mbox{ where }
	i \in \sigma_{1}, 0 \le t < k,\\
	0, & \mbox{otherwise.}
	\end{cases}
\]
\item If $a = 1$ and if $\ell$ is one of two marked points of order two, 
\[
	D_{I} \cdot \Mzn^{G} = \begin{cases}1, & I = \sigma_{1} \cup \{\ell\}
	\mbox{ or } I = \sigma_{2} \cup \{\ell\},\\
	1, & I = \{\sigma^{t}(i), \sigma^{t}(\tau_{1}(i)), \sigma^{t}(m)\} \mbox{ where }
	i \in \sigma_{1}, 0 \le t < k,\\
	1, & I = \{\sigma^{t}(i), \sigma^{t}(\tau_{2}(i)), \sigma^{t}(n)\} \mbox{ where }
	i \in \sigma_{1}, 0 \le t < k,\\
	0, & \mbox{otherwise.}
	\end{cases}
\]
\end{enumerate}
\end{enumerate}
\end{proposition}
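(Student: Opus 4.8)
The plan is to imitate the proof of Proposition~\ref{prop:intersectionboundary}. Since a general point of $\Mzn^{G}$ parametrizes a smooth curve, $D_{I}\cdot\Mzn^{G}$ equals the number of points of $\Mzn^{G}\cong\PP^{1}$ parametrizing a singular curve that lies on $D_{I}$, counted with multiplicity, and such a point occurs exactly when the moving orbit of order $2k$ collapses onto one of the special orbits. I would split by $b$, the number of marked orbits of order $k$, since $b$ governs whether $\sigma$ is a product of $2$, $3$ or $4$ disjoint $k$-cycles. When $b=0$ (part (1)) there is no marked orbit of order $k$, so the only nontrivial cycles of $\sigma$ come from the moving orbit, which $\sigma$ breaks into its two $\langle\sigma\rangle$-orbits $\sigma_{1},\sigma_{2}$, each a $k$-cycle, together with $2a$ fixed indices from the polar orbit. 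By Lemma~\ref{lem:whendihedraliscyclic}, $\Mzn^{G}=C^{\sigma}$, so Proposition~\ref{prop:intersectionboundary} gives everything with $j=2a$: the case $a=0$ is items (2) and (1) of that proposition, and $a=1$ is items (4) and (1) --- noting that for $b=0$ the sets $\sigma_{1}\cup\{\ell\}$ and $\sigma_{2}\cup\{\ell'\}$ are complementary, where $\ell,\ell'$ are the two polar marks.

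For $b\geq 1$ I would first fix the combinatorics. A reflection $\tau\in G\smallsetminus\langle\sigma\rangle$ is, up to conjugacy, a half-turn about an equatorial axis; it swaps the two poles, hence interchanges $\sigma_{1}$ and $\sigma_{2}$, while fixing each of the two equatorial orbits setwise. Thus the $b$ marked equatorial orbits are $\tau$-invariant $k$-cycles $\sigma_{3}$ (and $\sigma_{4}$ when $b=2$), which is the claimed cycle structure. To produce the index $m$: of the $k$ half-turn axes, at least one meets any prescribed equatorial orbit --- for $k$ odd every axis meets both equatorial orbits in one point, and for $k$ even the $k/2$ ``vertex'' axes meet one orbit in two points while the $k/2$ ``edge'' axes meet the other --- so we may choose $\tau$ (and, for $b=2$, reflections $\tau_{1},\tau_{2}$) fixing a marked point $m\in\sigma_{3}$ (and $n\in\sigma_{4}$). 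In the standard model, with rotations $z\mapsto\zeta z$ for $\zeta$ a primitive $k$th root of unity and $\tau\colon z\mapsto\zeta^{2c}/z$ fixing the location $\zeta^{c}$ of $m$, one computes that $\tau$ sends the point of $\sigma_{1}$ indexed by $j$ to the point of $\sigma_{2}$ indexed by $2c-j$; this is the pairing $\{i,\tau(i)\}$ appearing in the statement.

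Then comes the collision count. Parametrize $\Mzn^{G}\cong\PP^{1}$ by the generic orbit, equivalently by a point $p$ on it; for $b\geq 1$ this is an honest coordinate, since the only possible identification $p\sim-p$ is obstructed once a marked orbit of order $k$ is present (the map $z\mapsto-z$ carries such an orbit to an unmarked location, or to the other marked equatorial orbit with the wrong labels). The singular members come from three kinds of collision. (i) The moving orbit hits the polar orbit, at $p\to 0$ and $p\to\infty$: the $2k$ marks collapse into a group of $k$ at each pole, giving limits on $D_{\sigma_{1}}$ and $D_{\sigma_{2}}$ when $a=0$ --- two boundary points, hence the multiplicity $2$, exactly as in Proposition~\ref{prop:intersectionboundary}(2) --- and on $D_{\sigma_{1}\cup\{\ell\}}$ and $D_{\sigma_{2}\cup\{\ell\}}$ when $a=1$, with multiplicity $1$, as in Proposition~\ref{prop:intersectionboundary}(4). (ii) The moving orbit hits a marked equatorial orbit $\sigma_{3}$ (or $\sigma_{4}$): two moving marks and one mark of $\sigma_{3}$ (or $\sigma_{4}$) collide at each of its $k$ points, and running $p$ through the $k$ relevant values and applying the $k$ powers of $\sigma$ produces precisely the divisors $D_{\{\sigma^{t}(i),\sigma^{t}(\tau(i)),\sigma^{t}(m)\}}$ (or the analogues with $\tau_{2},n$), each with intersection number $1$. (iii) Only when $b=1$: the moving orbit hits the unmarked equatorial orbit, so two moving marks collide at each of its $k$ points, producing $D_{\{i,j\}}$ with $i\in\sigma_{1},j\in\sigma_{2}$; here the parity of $k$ enters, since for a fixed shift $s$ the equation $p^{2}=\zeta^{s}$ has two roots, and for $k$ odd exactly one of them lies on each equatorial orbit --- so every such $D_{\{i,j\}}$ is met once, giving items (2)(a),(c) --- whereas for $k$ even both roots lie on the same equatorial orbit, the unmarked one precisely when $s$ is odd, so the divisors $D_{\{i,\sigma^{2t+1}(\tau(i))\}}$ are met with multiplicity $2$ and the remaining two-element divisors involve a mark of $\sigma_{3}$ and drop out, giving items (2)(b),(d). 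Part (3) is the same analysis with both equatorial orbits marked and none unmarked, so no two-element divisor survives; in every case one checks that the remaining boundary divisors meet $\Mzn^{G}$ trivially because the relevant marked points never collide along the family.

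The difficulty is not a single hard step but the bookkeeping in (ii)--(iii): tracking, for each collision, exactly which labelled points coincide, so that the resulting $D_{I}$ comes out in the stated form $\{\sigma^{t}(i),\sigma^{t}(\tau(i)),\sigma^{t}(m)\}$ with the correct $\tau$ and with the correct multiplicity. The only genuinely substantive point hidden there is the source of the factor $2$ distinguishing the two parities of $k$, namely whether the two square roots of a $k$th root of unity lie in the same $D_{k}$-orbit of order $k$ on $\PP^{1}$; the rest is a routine elaboration of the proof of Proposition~\ref{prop:intersectionboundary}.
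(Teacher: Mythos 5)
Your proposal is correct and follows exactly the route the paper intends: item (1) by reduction to Proposition \ref{prop:intersectionboundary} via Lemma \ref{lem:whendihedraliscyclic}, and items (2)--(3) by counting collisions of the moving orbit of order $2k$ with the special orbits (the paper states this in two sentences and leaves the details as an exercise, which you have carried out, correctly isolating the parity issue of whether the two square roots of a $k$th root of unity lie in the same equatorial orbit as the source of the multiplicity $2$ in cases 2(b) and 2(d)). No gaps.
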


\begin{proof}
Item (1) is just a restatement of Proposition
\ref{prop:intersectionboundary}. Items (2) and (3) can be obtained by
considering when a general orbit $\sigma_{1} \cup \sigma_{2}$ can
collide with a point in special orbits ($\sigma_{3}, \sigma_{4},
\{\ell, \tau(\ell)\}$). We leave the proof as an exercise for the reader.
\end{proof}

\begin{example}\label{ex:dihedralcurve}
Let $G = \langle (123)(456)(789), (14)(26)(35)(89)\rangle \cong D_{3}$
and $C = \ovop{M}_{0,9}^{G}$ on $\ovop{M}_{0,9}$. In this case, $G$ is
of type $(0, 1)$. Then the marked points $x_{1}, \cdots, x_{6}$
form an orbit of order $6$ and the marked points $x_7, x_8, x_9$
form an orbit of order $3$. $C \cdot D_{\{1,2,3\}} = C \cdot D_{\{4,5,6\}} = 2$ and $C \cdot D_{\{i,j\}} = 1$ when $1 \le i \le 3$ and $4 \le j \le 6$. Also $C \cdot D_{I} = 1$ if $D_{I}$ is one of
\[
	D_{\{1,4,7\}}, D_{\{2,5,8\}}, D_{\{3,6,9\}}, 
	D_{\{1,6,8\}}, D_{\{2,4,9\}}, D_{\{3,5,7\}},
	D_{\{1,5,9\}}, D_{\{2,6,7\}}, D_{\{3,4,8\}}.
\]
All other intersection numbers are zero.

An interesting fact about $C$ is that for \emph{every} projection
$\pi: \ovop{M}_{0,9} \to \ovop{M}_{0,4}$, $C$ is not contracted,
because for any four of nine points, their cross ratio is not a
constant. In particular, $C$ is not a fiber of a hypergraph morphism (\cite[Definition 4.4]{CT12}), thus it is not a curve constructed in \cite{CT12}.

Proposition \ref{prop:intersectiondihedral} allows us
to write the class of $C$ as a (non-effective) linear combination of
F-curves.  We create a vector of intersection numbers with the
nonadjacent basis of divisors, and use the command
\texttt{writeCurveInSingletonSpineBasis} in the \texttt{M0nbar}
package for \texttt{Macaulay2} to obtain the coefficients of $C$ in
the so-called ``singleton spine basis'' of F-curves.  The resulting
expression is supported on 37 F-curves.  26 curves in this expression
have positive coefficients, and 11 curves in this expression have
negative coefficients.

In Example \ref{ex:linearcombinationn=9} we will express the class of this
curve as an effective linear combination of F curves.
\end{example}

\begin{example}\label{ex:dihedraln=12}
For notational simplicity, write $[12] = \{1,2,3,4,5,6,7,8,9,a,b,c\}$. Let 
\[
	G = \langle (123)(456)(789)(abc), (14)(26)(35)(89)(ac)\rangle \cong D_{3}
\] 
and $C := \ovop{M}_{0,12}^{G}$ on $\ovop{M}_{0,12}$. Then $x_{1},
x_{2}, \cdots, x_{6}$ form an orbit of order $6$; $x_{7}, x_{8},
x_{9}$ form an orbit of order $3$; and $x_a, x_b, x_c$ form  an
orbit of order 3.  $D_{\{1,2,3\}} \cdot C = D_{\{4,5,6\}} \cdot C = 2$
and $D_{I} \cdot C = 1$ for the following 18 irreducible components of $D_{3}$, 
\[
	D_{\{1,4,7\}}, D_{\{2,5,8\}}, D_{\{3,6,9\}}, D_{\{1,6,8\}}, D_{\{2,4,9\}}, D_{\{3,5,7\}},
\]
\[
	D_{\{1,5,9\}}, D_{\{2,6,7\}}, D_{\{3,4,8\}}, D_{\{1,5,c\}}, D_{\{2,6,a\}}, D_{\{3,4,b\}}, 
\]
\[
	D_{\{1,4,a\}}, D_{\{2,5,b\}}, D_{\{3,6,c\}}, D_{\{1,6,b\}}, D_{\{2,4,c\}}, D_{\{3,5,a\}}.
\]

Proposition \ref{prop:intersectiondihedral} allows us
to write the class of $C$ as a (non-effective) linear combination of
F-curves.  We create a vector of intersection numbers with the
nonadjacent basis of divisors, and use the command
\texttt{writeCurveInSingletonSpineBasis} in the \texttt{M0nbar}
package for \texttt{Macaulay2} to obtain the coefficients of $C$ in
the so-called ``singleton spine basis'' of F-curves.  The resulting
expression is supported on 103 F-curves.  69 curves in this expression
have positive coefficients, and 34 curves in this expression have
negative coefficients.

In Section \ref{sec:example} we will express the class of this
curve as an effective linear combination of F-curves.  
\end{example}

\subsection{Rigidity}
For a rational curve $f : \PP^{1} \to X$ to a smooth projective variety $X$ of dimension $d$, 
\begin{equation}\label{eqn:deformationspace}
\dim_{[f]}\mathrm{Hom}(\PP^{1}, X) \ge -K_{X}\cdot f_{*}\PP^{1} + d\chi(\cO_{\PP^{1}}) = -K_{X}\cdot f_{*}\PP^{1} + d
\end{equation}
by \cite[Theorem 1.2]{Kol96}. If $f$ is rigid and $X = \Mzn$, then $\dim_{[f]}\mathrm{Hom}(\PP^{1}, \Mzn) \le \mathrm{Aut}(\PP^{1}) = 3$, thus $K_{\Mzn} \cdot f_{*}\PP^{1} \ge n - 6$. In particular, for $n \ge 7$, the intersection must be positive. 

On the other hand, 
\begin{equation}
K_{\Mzn} = \sum_{k=2}^{\lfloor \frac{n}{2}\rfloor}\left(-2 + \frac{k(n-k)}{n-1}\right)D_{k}
\end{equation}
so for $n \ge 7$, except the coefficient of $D_{2}$, all other coefficients are nonnegative. Thus if we want to find an example of a rigid curve, then its intersection with $D_{k}$ for $k \ge 3$ should be large compared to the intersection with $D_{2}$. 

Let $G \subset S_{n}$ be isomorphic to $D_{k}$ with odd $k$ and $n = 4k$. Example \ref{ex:dihedraln=12} is the case of $k = 3$. In this case, $\Mzn^{G} \cdot D_{3} = 2k^{2}$, $\Mzn^{G}\cdot D_{k} = 4$ and $\Mzn^{G}\cdot D_{i} = 0$ for all $i \ne 3, k$ by Proposition \ref{prop:intersectiondihedral}. So 
\[
	\Mzn^{G} \cdot K_{\Mzn} = 2k^{2}-8.
\]
Therefore for $k \ge 3$, it is a large positive number, and we may have a rigid curve. When $k = 3$, it gives a curve on $\ovop{M}_{0,12}$ (Example \ref{ex:dihedraln=12}). This example  is different from the rigid curve of \cite[Section 4]{CT12}, because it does not intersect $D_{2}$ or $D_{4}$. 

To show equality in \eqref{eqn:deformationspace}, we would need to
evaluate the normal bundle to the rational curve. The computation of the blow-up formula for the normal bundle is not easy unless the blow-up center is contained in the curve.

\section{Toric degenerations on Losev-Manin spaces}\label{sec:LosevManin}

Let $C = \Mzn^{G}$ be a curve of the type described in the previous
section.  We would like to compute the numerical class of a curve
$C$ and find an effective linear combination of F-curves which is numerically
equivalent to $C$. In the case of a dihedral group, the idea of using
the basis dual to the nonadjacent basis in Section
\ref{sec:efflincombcyclic} does not work anymore. To do this computation, instead of computing the class directly, we will use Losev-Manin space (\cite{LM00}) to find an approximation of it first. 

\subsection{Background on Losev-Manin spaces}

In this section, we give a brief review on Losev-Manin spaces. 

Fix a sequence of positive rational numbers $A = (a_{1}, a_{2}, \cdots, a_{n})$ where $0 < a_{i} \le 1$ and $\sum a_{i} > 2$. Then the moduli space $\Mza$ of weighted pointed stable curves (\cite{Has03}) is the moduli space of pairs $(X, x_{1}, x_{2}, \cdots, x_{n})$ such that:
\begin{itemize}
	\item $X$ is a connected, reduced projective curve of $p_{a}(X) = 0$,
	\item $(X, \sum a_{i}x_{i})$ is a semi-log canonical pair, 
	\item $\omega_{X}+\sum a_{i}x_{i}$ is ample. 
\end{itemize}
It is well known that $\Mza$ is a fine moduli space of such pairs, and there exists a divisorial contraction $\rho : \Mzn \to \Mza$ which preserves the interior $\mathrm{M}_{0,n}$. For more details, see \cite{Has03}. 

\begin{definition}\cite{LM00}, \cite[Section 6.4]{Has03}
The \emph{Losev-Manin space} $\Ln$ is a special case of Hassett's weighted $(n+2)$-pointed stable curves such that two of weights are $1$ and the rest of them are $\frac{1}{n}$. For example, if the first and second points are weight $1$ points, then $\Ln = \ovop{M}_{0,\left(1,1,\frac{1}{n}, \frac{1}{n}, \cdots, \frac{1}{n}\right)}$.
\end{definition}

The Losev-Manin space $\Ln$ is a
projective toric variety, and among toric $\Mza$, it is the closest one
to $\ovop{M}_{0,n+2}$. Indeed, $\Ln$ is a toric variety whose
corresponding polytope is the permutohedron of dimension $n-1$
(\cite[Section 7.3]{GKZ08}), which is obtained from an ($n-1$)-dimensional simplex by
carving smaller dimensional faces. So $\Ln$ can be obtained by
successive blow-ups of a projective space as following. Let $p_{1},
p_{2}, \cdots, p_{n}$ be the standard coordinate points of
$\PP^{n-1}$. For a nonempty subset $I \subset [n]$, let $L_{I}$ be the
linear subspace of $\PP^{n-1}$ spanned by $\{p_{i}\}_{i \in
  I}$. Blow-up $\PP^{n-1}$ along the $n$ coordinates points.  Next, blow-up along the proper transforms of $L_{I}$ with $|I| = 2$. After that, blow-up along the proper transforms of $L_{I}$ with $|I| = 3$. If we perform these blow-ups along all $L_{I}$ up to $|I| = n-3$, we obtain $\Ln$. 

Note that there is a dominant reduction morphism $\rho : \ovop{M}_{0,n+2} \to \Ln$ (\cite[Theorem 4.1]{Has03}), because it is a special case of Hassett's space. If $i$ and $j$-th points are weight $1$ points on $\Ln$, we will use the notation $\rho_{i,j}$ for the reduction map $\rho$, to indicate which points are weight 1 points on $\Ln$.

As a moduli space, $\Ln$ is a fine moduli space of chains of rational curves. With this weight distribution, all stable rational curves are chains of $\PP^{1}$. Moreover, if $p$, $q$ are two points with weight 1 and $x_{1}, x_{2}, \cdots, x_{n}$ are points with weight $\frac{1}{n}$, then one of $p$, $q$ is on one of end components and the other one is on the other end component. We can pick one of them (say $p$) as the $0$-point. The other point becomes the $\infty$-point. (This notation will be justified soon.) So each boundary stratum corresponds to an \emph{ordered} partition of $[n] := \{1,2, \cdots, n\}$ by reading the subset of marked points on each irreducible component (from $0$-point to $\infty$-point). For example, the trivial partition $[n]$ corresponds to the big cell. The partition $I | J$ corresponds a toric divisor $D_{I \cup \{0\}} = D_{J \cup \{\infty\}}$. A partition $I_{1}|I_{2}|\cdots |I_{n-1}$ of length $n-1$ (so only one of $I_{i}$ has two elements and the others are singleton sets) corresponds to a toric boundary curve. 

\begin{definition}
For an ordered partition $I_{1}|I_{2}|\cdots|I_{k}$ of $[n]$, we say a rational chain 
\[
	(X, x_{1}, x_{2}, \cdots, x_{n}, 0, \infty) \in \Ln
\]
is \emph{of type $I_{1}|I_{2}|\cdots|I_{k}$} if 
\begin{itemize}
\item $X$ is a rational chain of $k$ projective lines $X_{1}, X_{2}, \cdots, X_{k}$;
\item $0 \in X_{1}$, $\infty \in X_{k}$;
\item $x_{i} \in X_{j}$ if and only if $i \in I_{j}$. 
\end{itemize}
\end{definition}

\begin{figure}[!ht]
\begin{tikzpicture}[scale=0.4]
	\draw[line width=1pt] (0, 5) -- (6, 0);
	\draw[line width=1pt] (5, 0) -- (11, 5);
	\draw[line width=1pt] (10, 5) -- (16, 0);
	\draw[line width=1pt] (15, 0) -- (21, 5);
	\fill (1.2, 4) circle (4pt);
	\fill (19.8, 4) circle (4pt);
	\fill (3.6, 2) circle (4pt);
	\fill (7.4, 2) circle (4pt);
	\fill (9.8, 4) circle (4pt);
	\fill (12.4, 3) circle (4pt);
	\fill (14.8, 1) circle (4pt);
	\fill (17.4, 2) circle (4pt);
	\fill (18.6, 3) circle (4pt);
	
	\node (0) at (0.2, 4) {$0$};
	\node (inf) at (20.8, 4) {$\infty$};
	\node (1) at (2.6, 2) {$1$};
	\node (4) at (6.4, 2) {$4$};
	\node (3) at (8.8, 4) {$3$};
	\node (2) at (11.4, 3) {$2$};
	\node (5) at (13.8, 1) {$5$};
	\node (7) at (18.4, 2) {$7$};
	\node (6) at (19.6, 3) {$6$};	
\end{tikzpicture}
\caption{A rational chain of type $1|34|25|67$}
\end{figure}
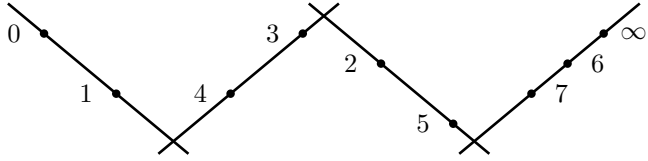

For each marked point $x_{i}$ with weight $1/n$, we are able to take a
one parameter subgroup $T_{i}$ of $(\CC^{*})^{n-1} \subset \Ln$, which
moves $x_{i}$ only. $T_{i}$ acts on $(X, x_{1}, x_{2}, \cdots, x_{n},
0, \infty)$ as a multiplication of $\CC^{*}$ on the component
containing $x_{i}$. Note that every component of $X$ has two special
points (singular points or points with weight 1). Let $X_{i}$ be the
irreducible component containing $x_{i}$, and let $y$ be the special point of $X_{i}$ which is closer to $0$-point. For $t \in T_{i}$, the limit 
\[
	\lim_{t \to 0}t \cdot (X, x_{1}, x_{2}, \cdots, x_{n}, 0, \infty)
\]
is the $(\frac{1}{n}, \frac{1}{n}, \cdots, \frac{1}{n}, 1, 1)$-stable
curve obtained by first making a bubble at $y$, then putting $x_{i}$
on the bubble, and then stabilizing. If $x_{i}$ is the only marked point with weight $1/n$ on $X_{i}$, then $(X, x_{1}, \cdots, x_{n}, 0, \infty)$ is $T_{i}$-invariant. 

\begin{figure}[!ht]
\begin{tikzpicture}[scale=0.24]
	\draw[line width=1pt] (0, 5) -- (6, 0);
	\draw[line width=1pt] (5, 0) -- (11, 5);
	\draw[line width=1pt] (10, 5) -- (16, 0);
	\draw[line width=1pt] (15, 0) -- (21, 5);
	\fill (1.2, 4) circle (5pt);
	\fill (19.8, 4) circle (5pt);
	\fill (3.6, 2) circle (5pt);
	\fill (7.4, 2) circle (5pt);
	\fill (9.8, 4) circle (5pt);
	\fill (12.4, 3) circle (5pt);
	\fill (14.8, 1) circle (5pt);
	\fill (17.4, 2) circle (5pt);
	\fill (18.6, 3) circle (5pt);
	
	\node (0) at (0.2, 4) {$0$};
	\node (inf) at (20.8, 4) {$\infty$};
	\node (1) at (2.6, 2) {$1$};
	\node (4) at (6.4, 2) {$4$};
	\node (3) at (8.8, 4) {$3$};
	\node (2) at (11.4, 3) {$2$};
	\node (5) at (13.8, 1) {$5$};
	\node (7) at (18.4, 2) {$7$};
	\node (6) at (19.6, 3) {$6$};
	
	\node (arrow) at (13, 3.5) {$\nwarrow$};

	\node (to) at (23, 2.5) {$\Rightarrow$};
	
	\draw[line width=1pt] (25, 5) -- (31, 0);
	\draw[line width=1pt] (30, 0) -- (36, 5);
	\draw[line width=1pt] (34, 4.5) -- (42, 4.5);
	\draw[line width=1pt] (40, 5) -- (46, 0);
	\draw[line width=1pt] (45, 0) -- (51, 5);
	\fill (26.2, 4) circle (5pt);
	\fill (49.8, 4) circle (5pt);
	\fill (28.6, 2) circle (5pt);
	\fill (32.4, 2) circle (5pt);
	\fill (34.8, 4) circle (5pt);
	\fill (38, 4.5) circle (5pt);
	\fill (44.8, 1) circle (5pt);
	\fill (47.4, 2) circle (5pt);
	\fill (48.6, 3) circle (5pt);
	
	\node (00) at (25.2, 4) {$0$};
	\node (inff) at (50.8, 4) {$\infty$};
	\node (11) at (27.6, 2) {$1$};
	\node (44) at (31.4, 2) {$4$};
	\node (33) at (33.8, 4) {$3$};
	\node (22) at (38, 3.5) {$2$};
	\node (55) at (43.8, 1) {$5$};
	\node (77) at (48.4, 2) {$7$};
	\node (66) at (49.6, 3) {$6$};

\end{tikzpicture}
\caption{A description of $\displaystyle\lim_{t \to 0}t \cdot (X, x_{1}, x_{2}, \cdots, x_{n}, 0, \infty)$ for $t \in T_{2}$}
\end{figure}

On $\Ln$, torus invariant divisors are all the images of boundary divisors on $\ovop{M}_{0,n+2}$ of the form $D_{I}$ where $0 \in I$ and $\infty \notin I$. Thus all toric boundary cycles (which are intersections of toric divisors) are images of F-strata. In particular, all 1-dimensional toric boundary cycles are images of F-curves.

\subsection{Computing limit cycles}

In this section, we describe a method to compute a numerically
equivalent effective linear combination of toric boundary curves for a
given effective curve $\overline{C} \subset \Ln$. Because $\Ln$ is a
toric variety, the Mori cone is generated by torus invariant
curves. Thus, there exists an effective linear combination of toric
boundary cycles representing $[\overline{C}]$. We will apply this idea
to the image $\rho(C)$ for an invariant curve $C$ on
$\ovop{M}_{0,n+2}$. Thus this effective linear combination is an
approximation of F-curve linear combination of $C$ on
$\ovop{M}_{0,n+2}$. Later, in Section \ref{sec:efflincomb}, we will
discuss a strategy that uses this approximation to find an effective linear combination of F-curves for $C$ on $\ovop{M}_{0,n+2}$.

The basic idea of computing limit cycles on $\Ln$ is the
following. For each marked point $x_{i}$ with weight $1/n$, we have a
one parameter subgroup $T_{i} \subset (\CC^{*})^{n-1}$ which moves
$x_{i}$ only. So if we choose an ordering of the marked points $x_{1}, x_{2}, \cdots, x_{n}$, then we have a sequence of one parameter subgroups $T_{1}, T_{2}, \cdots, T_{n-1}$, where $T_{i}$ is the one parameter subgroup moving $x_{i}$. Note that they generate the big torus $T := (\CC^{*})^{n-1} \subset \Ln$. 

Let $\overline{C} \subset \Ln$ be an effective curve. Let $\CLn$ be
the Chow variety parameterizing algebraic cycles of dimension one. Let
$C^{0} := \overline{C}$ and consider $[C^{0}] \in \CLn$. Obviously $T
\subset \Ln$ acts on $\CLn$. We will compute the limit
$[C^{1}]:=\lim_{t \to 0}t\cdot [C^{0}]$ for $t \in T_{1}$. Then
$[C^{1}]$ is a $T_{1}$-invariant point on $\CLn$ and $[C^{1}]$ is an
effective cycle on $\Ln$. In Section \ref{ssec:limitcomponents} we
describe a way to find the irreducible components of $[C^{1}]$, and in
Section \ref{ssec:multiplicities} we explain how to compute the
multiplicity of each irreducible component. 

For the next step, we can compute $[C^{2}] := \lim_{t \to 0}t\cdot
[C^{1}]$ for $t \in T_{2}$, componentwise. Note that each irreducible
component of $[C^{1}]$ is contained in an irreducible component of
boundary, which is isomorphic to the product $\prod \ovop{L}_{k}$ for
small $k$. Also $T_{2}$ acts on exactly one of $\ovop{L}_{k}$. Hence
the limit computation is very similar to the previous computation. If we perform the limit computations successively for $T_{i}$ with $1 \le i \le n-1$, then the limit cycle is invariant for all $T_{i}$. Therefore it is invariant for $T$, and the limit $[C^{n-1}]$ is a linear combination 
\[
	[C^{n-1}] = \sum b_{i}[B_{i}]
\]
of torus invariant curves $B_{i}$. Then $[C^{n-1}]$ is numerically equivalent to $[C^{0}] = [\overline{C}]$. Moreover, each torus invariant curve $B_{i}$ is the image of a unique F-curve $F_{I_{i}}$ on $\ovop{M}_{0,n+2}$. 

We summarize this process in Section \ref{ssec:summary}.

\begin{remark}
The method described above for computing a numerically equivalent effective linear combination of boundaries works for any toric variety and any curve on it. Of course, if the given toric variety is complicated, the actual computation is hopeless. But in the case of $\Ln$, even though as a toric variety it is very complicated, this computation is doable because of its beautiful modular interpretation and inductive structure. 
\end{remark}

\subsection{Limit components}\label{ssec:limitcomponents}

In this section, we explain how to find irreducible components
appearing in the limit of given curve on $\Ln$. We will describe our
method with an example $\overline{C} = \rho_{8,9}(C)$, where $C$ is
the $D_{3}$-invariant curve on $\ovop{M}_{0,9}$ from Example
\ref{ex:k=3C_00}. We will use the reduction map $\rho_{8,9} :
\ovop{M}_{0,9} \to \ovop{L}_{7}$ and take the $8^{th}$ point as our
$0$-point and $9^{th}$ marked point as the $\infty$-point. 

A general point of $C \cap \mathrm{M}_{0,9}$ can be written as 
\[
\left(\PP^{1}, z, \omega z, \omega^{2}z, \frac{1}{z}, \frac{\omega}{z}, 	\frac{\omega^{2}}{z}, 1, \omega, \omega^{2}\right)
\]
where $z$ is a coordinate function and $\omega$ is a cubic root of unity. By using a M\"obius transform $x \mapsto \frac{1-\omega^{2}}{1-\omega}\cdot \frac{x-\omega}{x-\omega^{2}}$, we obtain new coordinates
\small
\[
C(z) := \left(\PP^{1}, \alpha \frac{z-\omega}{z-\omega^{2}}, \alpha \frac{\omega z - \omega}{\omega z - \omega^{2}}, \alpha \frac{\omega^{2}z - \omega}{\omega^{2}z-\omega^{2}}, \alpha\frac{1-\omega z}{1-\omega^{2} z}, \alpha \frac{\omega - \omega z}{\omega - \omega^{2}z}, \alpha \frac{\omega^{2}-\omega z}{\omega^{2}-\omega^{2} z}, 1, 0, \infty\right)
\]
\normalsize
where $\alpha = \frac{1-\omega^{2}}{1-\omega}$.

From this description of the general point of $C$, we can recover 
the special
points on $C$, which correspond to singular curves via stable
reduction. For example, $\lim_{z \to \omega}C(z)$ on $\ovop{M}_{0,9}$
is the point corresponding to the following rational curve with four irreducible components. 

\begin{figure}[!ht]
\begin{tikzpicture}[scale=0.5]
	\draw[line width=1pt] (0, 0) -- (10, 0);
	\draw[line width=1pt] (1, -1) -- (0, 5);
	\draw[line width=1pt] (5, -1) -- (5, 5);
	\draw[line width=1pt] (9, -1) -- (10, 5);
	\fill (0.1, 4) circle (4pt);
	\node (8) at (-0.8, 4) {$8$};
	\fill (9.8, 4) circle (4pt);
	\node (9) at (10.8, 4) {$9$};
	\fill (0.3, 3) circle (4pt);
	\node (1) at (-0.6, 3) {$1$};
	\fill (0.5, 2) circle (4pt);
	\node (6) at (-0.4, 2) {$6$};
	\fill (5, 4) circle (4pt);
	\node (7) at (4,4) {$7$};
	\fill (5, 3) circle (4pt);
	\node (3) at (4,3) {$3$};
	\fill (5, 2) circle (4pt);
	\node (5) at (4,2) {$5$};
	\fill (9.7, 3) circle (4pt);
	\node (2) at (10.6, 3) {$2$};
	\fill (9.5, 2) circle (4pt);
	\node (4) at (10.4, 2) {$4$};
\end{tikzpicture}
\caption{The stable curve corresponds to $\lim_{z \to \omega}C(z)$ on $\ovop{M}_{0,9}$}
\end{figure}
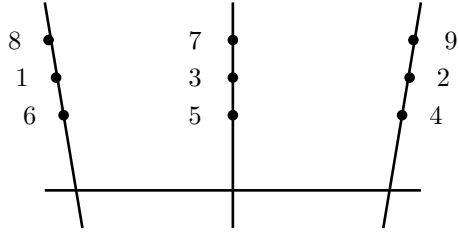

Let $C^{0} := \overline{C} = \rho(C)$ be the image of $C$ on
$\ovop{L}_{0,7}$. Then a general point of $C^{0}$ has the same
coordinates as $C$, but the limit curve is different. For example,
$\lim_{z \to \omega}C^{0}(z)$ is obtained by contracting the tail
containing $x_7$ in $\lim_{z \to \omega}C(z)$ on $\ovop{M}_{0,9}$. Let $T_{1} = \langle t \rangle$ be the one parameter subgroup corresponding to the first marked point. Then $T_{1}$-action is given by 
\small
\[
t \cdot C^{0}(z) = \left(\PP^{1}, t\cdot \alpha \frac{z-\omega}{z-\omega^{2}}, \alpha \frac{\omega z - \omega}{\omega z - \omega^{2}}, \alpha \frac{\omega^{2}z - \omega}{\omega^{2}z-\omega^{2}}, \alpha\frac{1-\omega z}{1-\omega^{2} z}, \alpha \frac{\omega - \omega z}{\omega - \omega^{2}z}, \alpha \frac{\omega^{2}-\omega z}{\omega^{2}-\omega^{2} z}, 1, 0, \infty\right)
\]
\normalsize
and $\lim_{t \to 0}t \cdot C^{0}(z)$ for general $z$ is a
rational curve with two irreducible components $X_{0}$ and
$X_{\infty}$ such that $X_{0}$ contains $x_{1}$ and $x_{8}$, and
$X_{\infty}$ contains the rest of them with the same
coordinates. Therefore on $\lim_{t \to 0}t \cdot [C^{0}]$ on $\mathrm{Chow}_{1}(\ovop{L}_{7})$,
there is an irreducible component $C_{m}$ (the so-called main
component) containing all limits of the form $\lim_{t \to 0}t \cdot
C^{0}(z)$. We see that $C_m$ is contained in $D_{\{1,8\}}$. 

\begin{figure}[!ht]
\begin{tikzpicture}[scale=0.5]
	\draw[line width=1pt] (0, 0) -- (10, 0);
	\draw[line width=1pt] (1, -1) -- (0, 5);
	\fill (0.1, 4) circle (4pt);
	\node (8) at (-0.8, 4) {$8$};
	\fill (0.5, 2) circle (4pt);
	\node (1) at (-0.4, 2) {$1$};
	\fill (3, 0) circle (4pt);
	\node (2) at (3,1) {$2$};
	\fill (4, 0) circle (4pt);
	\node (3) at (4,1) {$3$};
	\fill (5, 0) circle (4pt);
	\node (4) at (5,1) {$4$};
	\fill (6, 0) circle (4pt);
	\node (5) at (6,1) {$5$};
	\fill (7, 0) circle (4pt);
	\node (6) at (7,1) {$6$};
	\fill (8, 0) circle (4pt);
	\node (7) at (8,1) {$7$};
	\fill (9, 0) circle (4pt);
	\node (9) at (9,1) {$9$};
\end{tikzpicture}
\caption{The $(\frac{1}{7}, \frac{1}{7}, \cdots, \frac{1}{7}, 1,
  1)$-stable curve corresponds to $\lim_{z \to \omega}C^{0}(z)$ for
  general $z$ on $\ovop{L}_{7}$}
\end{figure}
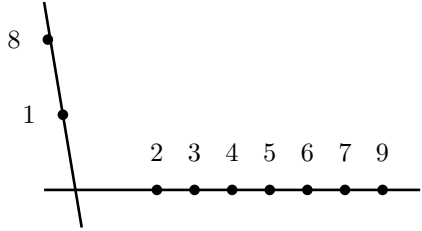

On the other hand, there are three points on $C^{0}$ that are already
contained in the toric boundary $\cup D_{I}$. These three points are
the cases where $z \to 1$, $z \to \omega$, and $z \to \omega^{2}$. If $p \in C^{0} \cap D_{I}$ and $D_{I} \cap D_{\{1,8\}} = \emptyset$, then $p^{1}:= \lim_{t \to 0}t \cdot p \notin D_{\{1,8\}}$. Therefore there must be an extra component connecting $p^{1}$ and $C_{m}$. For example, if $p := \lim_{z \to 1}C^{0}(z)$, then $p$ corresponds to a chain of rational curves $X_{0} \cup X_{1} \cup X_{\infty}$ such that $x_{2}, x_{5}, x_{8} \in X_{0}$, $x_{1}, x_{4}, x_{7} \in X_{1}$, and $x_{3}, x_{6}, x_{9} \in X_{\infty}$, or equivalently, an ordered partition $25|147|36$. Then $p^{1} = \lim_{t \to 0}t \cdot p$ corresponds to a chain corresponds to $25|1|47|36$. Similarly, $q := \lim_{z \to \omega}C^{0}(z)$ is a curve of partition type $16|357|24$ and $r := \lim_{z \to \omega^{2}}C^{0}(z)$ is a curve of partition type $34|267|15$. Hence $q^{1}$ corresponds to a curve of type $1|6|357|24$ and $r^{1}$ corresponds to a curve of type $34|267|1|5$. Note that $q^{1}$ is already on $D_{\{1,8\}}$. 

Let $z(t)$ be a holomorphic function such that $z(t) - 1$ has a simple
zero at $t = 0$. Then on $\lim_{t \to 0}t\cdot C^{0}(z(t))$, the
marked points $x_{1}$,
$x_{2}$, and $x_{5}$ approach $x_{8}$ at a constant rate, and $x_{3}$
and $x_{6}$ approach $x_{9}$ at a constant rate, too. Therefore, the
limit corresponds to a rational chain $X_{0} \cup X_{1} \cup
X_{\infty}$ such that $x_{1}, x_{2}, x_{5}, x_{8} \in X_{0}$, $x_{4},
x_{7} \in X_{1}$, and $x_{3}, x_{6}, x_{9} \in X_{\infty}$. Thus there
is a new component $C_{1}$ of $\lim_{t \to 0}t \cdot [C^{0}]$ such
that a general point of $C_{1}$ corresponds to a partition
$125|47|36$. Moreover, because the limit curve $\lim_{t \to 0}t \cdot
C^{0}$ is $T_{1}$-invariant, $C_{1}$ is $T_{1}$-invariant. So two
$T_{1}$-limits of the general point, which correspond to curves of type $1|25|47|36$ and $25|1|47|36$, are on $C^{1}$. 

If $z(t)$ is another holomorphic function such that $z(t) -
\omega^{2}$ has a simple zero at $t = 0$, then on $\lim_{t \to 0}t
\cdot C^{0}(z(t))$, the marked points $x_{3}$ and $x_{4}$ approach
$x_{8}$ at a constant rate and $x_{5}$ approaches $x_{9}$ at a
constant rate, too. So $\lim_{t \to 0}t \cdot C^{0}(z(t))$ is a curve
of partition type $34|1267|5$. Therefore, there is a new component
$C_{\omega^{2}}$ whose general point parameterizes a curve of type
$34|1267|5$. The two special points on $C_{\omega^{2}}$ parametrize curves of type $34|1|267|5$ and $34|267|1|5$. So $C_{\omega^{2}}$ does not intersect the main component $C_{m}$. 

Note that $\lim_{t \to 0}t\cdot C^{0}(z) = \lim_{t \to 0}t^{2}\cdot
C^{0}(z)$ for a general fixed $z$. But for $z(t)$ as in the previous
paragraph (that is, $z(t) -\omega^{2}$ has a simple zero at $t = 0$),
as $t \to 0$, on $t^{2}\cdot C^{0}(z(t))$, we have $x_{1}, x_{3}, x_{4} \to x_{8}$ and $x_{5} \to x_{9}$. So $\lim_{t \to 0}t \cdot C^{0}(z(t))$ is a curve of type $134|267|5$. Thus there is another component $C_{2\cdot \omega^{2}}$ of $C^{1}$ whose general point parametrizes a curve of type $134|267|5$, and its two special points parametrize curves of type $1|34|267|5$ and $34|1|267|5$, respectively. The point corresponding to the curve of type $1|34|267|5$ is on $C_{m} \subset D_{\{1,8\}}$. 

For notational convenience, we will use following notation. Let $E$ be an irreducible curve on $\Ln$, which is contained in a toric boundary $\cap D_{I}$ whose open dense subset is parameterizing curves of type $I_{1}|I_{2}|\cdots |I_{k}$. Then we say $E$ is \emph{of type $I_{1}|I_{2}|\cdots |I_{k}$}. 

In summary, the limit cycle $[C^{1}] := \lim_{t \to 0}t\cdot [C^{0}]$ for $t \in T_{1}$ has four irreducible components $C_{m}$, $C_{1}$, $C_{\omega}^{2}$, and $C_{2\cdot \omega^{2}}$ whose types are $1|234567$, $125|47|36$, $34|1267|5$, and $134|267|5$ respectively.

\subsection{Multiplicities}\label{ssec:multiplicities}

The extra irreducible components appearing on $\lim_{t \to 0}t \cdot
[C^{0}]$ may have multiplicities greater than 1. We are able to
evaluate the multiplicity of each irreducible component by computing
the number of preimages of a general point $p \in \lim_{t \to 0}t
\cdot [C^{0}]$, on $\epsilon \cdot [C^{0}]$ for small $\epsilon$. This
can be done by finding an explicit analytic germ $z(t)$ which gives the same $\lim_{t \to 0}t^{k}C^{0}(z(t))$. 

\begin{example}
For $C_{2\cdot \omega^{2}}$ in Section \ref{ssec:limitcomponents}, if we take $z(t) = \omega^{2} + \beta t + \cdots$, then on the limit cycle $X_{0} \cup X_{1} \cup X_{\infty} = \lim_{t \to 0}t^{2} C^{0}(z(t))$, the coordinates of $x_{1}$, $x_{2}$, and $x_{4}$ on $X_{0}$ are:
\[
x_{1} = \frac{\alpha(\omega^{2}-\omega)}{\beta}, 
x_{2} = \alpha \omega \beta, 
x_{4} = -\alpha \omega \beta.
\]
Since a nonzero scalar multiple gives the same cross ratio of $x_{1}$, $x_{2}$, $x_{4}$ and $0$, this configuration is equivalent to 
\[
x_{1} = \alpha(\omega^{2}-\omega), 
x_{2} = \alpha \omega \beta^{2}, 
x_{4} = -\alpha \omega \beta^{2}.
\]
Obviously $\pm \beta$ give the same limit. Thus the multiplicity is two.
\end{example}
By using a similar idea, we find that the components $C_{m}$,
$C_{1}$, $C_{\omega}^{2}$ have multiplicity 1, and $C_{2\cdot
  \omega^{2}}$ has multiplicity 2.

\subsection{Remaining steps of the computation}

In this section, for reader's convenience, we give the computation of $\overline{C} = \rho_{8,9}(C)$ for $C$ in Example \ref{ex:k=3C_00}. 

Set $C^{0} := \overline{C}$. The first limit $[C^{1}]:=\lim_{t \to 0}t \cdot [C^{0}]$ for $t \in T_{1}$ has four irreducible components, 
\[
C_{m}, C_{1}, C_{\omega^{2}}, \mbox{ and } C_{2\cdot \omega^{2}}
\]
which are of type $1|234567$, $125|47|36$, $34|1267|5$, and $134|267|5$ respectively. The component $134|267|5$ has multiplicity two. All other multiplicities are one. 

Topologically, $C^{1}$ is a tree of rational curves on
$\ovop{L}_{7}$. The main component $C_{m}$ is the spine and there are
three tails, $C_{1}$, $C_{\omega^2} \cup C_{2\cdot \omega^{2}}$, and a
`tail point' whose partition is $1|6|357|24$. (This is the point
$q^{1}$ in Section \ref{ssec:limitcomponents}.)  %As taking the limit
%with respect to the action of a one-parameter subgroup, a general
%point of the main component maps to a unique boundary stratum. But the
%limit of the three special points are not contained in the closure of
%the image of the main component in general; there will be new rational
%curves connecting them. All other components have exactly two special
%points which lie on deeper toric strata. Thus, the further
%degeneration of such rational component is just a chain of rational
%curves. Therefore, except the main component, all degenerations of
%other tails are chains of rational curves. 
  For the main component $C_m$ (which is already in $D_{\{1,8\}}$), 
there are three special points lying on the boundary of $D_{\{1,8\}}$.
If we take the limit $\lim_{t \to 0}t \cdot [C_{m}]$ for $t \in T_2$,
then except possibly for these three special points, 
all other points go to a unique boundary stratum,  $D_{\{1,8\}} \cap D_{\{1,2,8\}}$.
So if the limits of three special points are not contained in $D_{\{1,8\}} \cap D_{\{1,2,8\}}$, 
then there must be new rational curves connect the limit of general points on $C_m$ 
and the limits of special points.

For a smooth point $[(X, x_{1}, \cdots, x_{9})] \in C_{1}$, let
$Y_{2}$ be the irreducible component of $X$ containing $x_{2}$. If the
$0$-point is not on $Y_2$, let $Y_{0}$ be the \emph{connected}
component of $\overline{X - Y_{2}}$ containing $0$-point. Similarly,
$Y_{\infty}$ is the connected component of $\overline{X - Y_{2}}$
containing $\infty$-point, if $\infty$ point is not on $Y_2$. Then we
are able to evaluate limit $[C^{2}]:= \lim_{t \to 0}t \cdot [C^{1}]$
for $t \in T_{2}$ in the same way after replacing $Y_{0}$ by a
$0$-point, and $Y_{\infty}$ by an $\infty$-point because $T_2$ only
acts nontrivially on $Y_{2}$. $[C^{2}]$ has a main component (by abuse of notation, call it $C_{m}$) and three rational chains whose components are of type 
\[
	2|15|47|36, \; 12|5|47|36,
\]
\[
	1|6|2357|4, \; 1|26|357|4\; (2), 
\]
\[
	34|2|167|5, \; 34|12|67|5, \; 134|2|67|5\; (2), \; 1|234|67|5
\]
respectively. The integers in parentheses refer the multiplicity of each irreducible component. The main component is on the boundary stratum $D_{\{1,8\}} \cap D_{\{1,2,8\}}$, so it is of type $1|2|34567$. 

The limit $[C^{3}] := \lim_{t \to 0}t \cdot [C^{2}]$ for $t \in T_{3}$ has a main component of type $1|2|3|4567$ and three chains
\[
	2|15|47|3|6, \; 12|5|47|3|6, \; 1|2|5|347|6, \; 1|2|35|47|6\; (2),
\]
\[
	1|6|3|257|4, \; 1|6|23|57|4,\; 1|26|3|57|4\;(2), 1|2|36|57|4,
\]
\[
	3|4|2|167|5,\; 3|4|12|67|5,\; 3|14|2|67|5\;(2), 13|4|2|67|5\;(2), 
	1|3|24|67|5,\; 1|23|4|67|5.
\]

The next limit $[C^{4}]$ has a main component and three chains
\[
	2|15|4|7|3|6,\; 12|5|4|7|3|6,\; 1|2|5|4|37|69,\; 1|2|5|34|7|69, \; 1|2|35|4|7|6\; (2),
	1|2|3|45|7|6,
\]
\[
	1|6|3|257|4, \; 1|6|23|57|4, \; 1|26|3|57|4\;(2), \; 1|2|36|57|4, \; 1|2|3|6|457, \;
	1|2|3|46|57\;(2),
\]
and
\[
	3|4|2|167|5, \; 3|4|12|67|5, \; 3|14|2|67|5\;(2), 13|4|2|67|5\;(2), 1|3|24|67|5,
\]
\[
	 \;
	1|23|4|67|5, \; 1|2|3|4|567.
\]

Similarly, $[C^{5}]$ has a main component and three chains
\[
	2|15|4|7|3|6, \; 12|5|4|7|3|6,\; 1|2|5|4|37|69,\; 1|2|5|34|7|69, \; 1|2|35|4|7|6\; (2),
\]
\[
	1|2|3|45|7|6, \; 1|2|3|4|5|67
\]
and
\[
	1|6|3|5|27|4, \; 1|6|3|25|7|4, \; 1|6|23|5|7|4, \; 1|26|3|5|7|4\;(2), \; 1|2|36|5|7|4,
\]
\[
	1|2|3|6|5|47, \; 1|2|3|6|45|7, \; 1|2|3|46|5|7\;(2), 1|2|3|4|56|7,
\]
and 
\[
	3|4|2|167|5, \; 3|4|12|67|5, \; 3|14|2|67|5\;(2), 13|4|2|67|5\;(2), 1|3|24|67|5, \;
\]
\[
	1|23|4|67|5, \; 1|2|3|4|567.
\]

Finally, the main component of $[C^{6}]$ becomes a point, which is a torus invariant point of type $1|2|3|4|5|6|7$. Three tails are
\[
	2|15|4|7|3|6, \; 12|5|4|7|3|6,\; 1|2|5|4|37|6|9,\; 1|2|5|34|7|6|9, \; 1|2|35|4|7|6\; (2),
\]
\[
	1|2|3|45|7|6, \; 1|2|3|4|5|67
\]
and
\[
	1|6|3|5|27|4, \; 1|6|3|25|7|4, \; 1|6|23|5|7|4, \; 1|26|3|5|7|4\;(2), \; 1|2|36|5|7|4,
\]
\[
	1|2|3|6|5|47, \; 1|2|3|6|45|7, \; 1|2|3|46|5|7\;(2), 1|2|3|4|56|7,
\]
and 
\[
	3|4|2|6|17|5, \; 3|4|2|16|7|5, \; 3|4|12|6|7|5, \; 3|14|2|6|7|5\;(2), 13|4|2|6|7|5\;(2), 
\]
\[
	1|3|24|6|7|5, \; 1|23|4|6|7|5, \; 1|2|3|4|6|57, \; 1|2|3|4|56|7.
\]

Now we are able to describe each one dimensional boundary stratum as the image of an F-curve on $\ovop{M}_{0,9}$. By abuse of notation, let $F_{I_{1}, I_{2}, I_{3}, I_{4}}$ be the image of an F-curve on $\ovop{L}_{7} \cong \ovop{M}_{0, \left((\frac{1}{7})^{n}, 1, 1\right)}$ of $F_{I_{1}, I_{2}, I_{3}, I_{4}}$. The image $F_{I_{1}, I_{2}, I_{3}, I_{4}}$ is a torus invariant curve if and only if $8 \in I_{1}$, $9 \in I_{4}$, $|I_{2}| = |I_{3}| = 1$. It is contracted if and only if $8, 9 \in I_{1}$ and otherwise the image is not a torus invariant curve. For example, the torus invariant stratum $2|15|4|7|3|6$ is $F_{\{82,1,5,34679\}}$. Thus we obtain 
\begin{eqnarray*}
	\overline{C} = C^{0} &\equiv& 
	F_{\{82, 1,5, 34679\}} + F_{\{8, 1, 2, 345679\}}
	+ F_{\{81245, 3, 7, 69\}} + F_{\{8125, 3, 4,  679\}}\\
	&& + 2F_{\{812,3,5,4679\}} + F_{\{8123, 4, 5, 679\}} 
	+ F_{\{812345, 6, 7, 9\}} + F_{\{81356,2,7,49\}}\\
	&& + F_{\{8136,2,5,479\}} + F_{\{816,2,3,4579\}}
	+ 2F_{\{81,2,6,34579\}} + F_{\{812,3,6,4579\}}\\
	&& + F_{\{812356,4,7,9\}} + F_{\{81236, 4, 5, 79\}} 
	+ 2F_{\{8123,4,6,579\}} + F_{\{81234,5,6,79\}}\\
	&& + F_{\{82346,1,7,59\}} + F_{\{8234,1,6,579\}}
	+ F_{\{834,1,2,5679\}} + 2F_{\{83,1,4,25679\}}\\
	&& + 2F_{\{8,1,3,245679\}} + F_{\{813,2,4,5679\}}
	+ F_{\{81,2,3,45679\}} + F_{\{812346,5,7,9\}}\\
	&& + F_{\{81234,5,6,79\}}.
\end{eqnarray*}
on $\ovop{L}_{7}$. 

\subsection{Summary of the computation}\label{ssec:summary}

Here we summarize the strategy used in this section. Let $C$ be an
irreducible curve on $\Ln$, which intersects the big cell of
$\Ln$. Let $T_{i}$ be the one parameter subgroup moving $x_{i}$ only.
Let $L(C,n,i)$ denote the procedure to evaluate the limit cycle $\lim_{t \to 0}t \cdot C$ for $t \in T_{i}$ on $\Ln$. 

\begin{algorithm}[$L(C, n, i)$]
Let $C$ be an irreducible curve on $\Ln$. 
\begin{enumerate}
\item Write coordinates $(\PP^{1}, x_{1}(z), x_{2}(z), \cdots,
  x_{n}(z))$ of a general point on $C$, such that the $0$-point is $0$
  and the $\infty$-point is $\infty$.
\item Find all special points $p_{1}, p_{2}, \cdots, p_{k}$ on $C \cap (\Ln - (\CC^{*})^{n-1})$. Suppose that $p_{j}$ occurs when $z = z_{j}$. 
\item Take $\lim_{t \to 0}t \cdot C(z)$ for $t \in T_{i}$ and general $z \in C$. The closure is the main component $C_{m}$.
\item For each $p_{j}$, find all limits of the form $\lim_{t \to 0}t \cdot C(z(t))$ where $z(t)$ is a holomorphic function such that $z(t) - z_{j}$ has a pole of order $r$. Take the closure of all such limits and obtain irreducible components $C_{r\cdot z_{j}}$ connecting $\lim_{t \to 0}t \cdot p_{j}$ and $C_{m}$. 
\item Evaluate the multiplicity of each irreducible component $C_{r
    \cdot z_{j}}$, by counting the number of preimages of a general point $p \in C_{r \cdot z_{j}}$ on $\epsilon \cdot C_{r \cdot z_{j}}$. 
\end{enumerate}
\end{algorithm}

Then we can evaluate the toric degeneration by applying the algorithm $L(C, n, i)$ several times. 

\begin{algorithm}[Evaluation of the limit cycle]
Set $C^{0} = C$ and $i = 1$.
\begin{enumerate}
\item Write $C^{i-1} = \sum m_{j}C_{j}$ as a linear combination of irreducible components. 
\item Each irreducible component $C_{j}$ lies on a boundary stratum, which is isomorphic to $\prod \overline{\mathrm{L}}_{k}$. Furthermore, there is a unique $\overline{\mathrm{L}}_{k}$ where $x_{i+1}$ is not forgotten. 
\item Apply $L(C_{j}, k, i)$ to each irreducible component and set $C^{i}$ as the formal sum of all limits of irreducible components. 
\item Set $i = i + 1$ and repeat (1) and (2) to evaluate $C^{i}$ for $2 \le i \le n-1$.
\item $C^{n-1}$ is the desired toric degeneration. 
\end{enumerate}
\end{algorithm}

\section{Finding an effective linear combination}\label{sec:efflincomb}

Let $C$ be an effective rational curve on $\ovop{M}_{0,n+2}$ and
$\overline{C}$ be its image in $\Ln$. Using the techniques of the
previous section, we are able to compute the numerical class of
$\overline{C}$ on $\Ln$ as an effective linear combination for toric
boundary curves. We can regard it as a first approximation of an
effective linear combination of F-curves of $C$. In this section, we
will discuss some computational ideas to write $C$ as an effective $\ZZ$-linear combination of F-curves. 

On $\Ln$, suppose that $\overline{C} \equiv \sum b_{I}F_{I}$ where
$b_{I} > 0$ and $F_{I}$ is a torus invariant curve which is the image
of an F-curve, which, abusing notation, we also denote
$F_{I}$. Consider $\sum b_{I}F_{I}$ on $\ovop{M}_{0,n+2}$. In general,
it is not numerically equivalent to $C$ because $C$ passes through
several exceptional loci of $\rho : \ovop{M}_{0,n+2} \to \Ln$. Since
$\rho$ is a composition of smooth blow-downs, we are able to compute
the curve classes on the Mori cone of exceptional fiber passing
through $C$, which we need to subtract from $\sum b_{I}F_{I}$. This yields a numerical class of $C$ of the form 
\[
	C \equiv \sum b_{I}F_{I} - \sum c_{J}F_{J}
\]
where $b_{I}, c_{J} > 0$. 

\begin{example}\label{ex:linearcombinationn=9}
Consider the curve $C$ in Example \ref{ex:dihedralcurve}. Note that for
$\rho_{8,9} : \ovop{M}_{0,9} \to \ovop{L}_{7}$, all the exceptional loci intersecting $C$ are F-curves. By considering the proper transform, on $\ovop{M}_{0,9}$, we have 
\begin{eqnarray*}
	C &\equiv& F_{\{82, 1,5, 34679\}} + F_{\{8, 1, 2, 345679\}}
	+ F_{\{81245, 3, 7, 69\}} + F_{\{8125, 3, 4,  679\}}\\ 
	&& + 2F_{\{812,3,5,4679\}} + F_{\{8123, 4, 5, 679\}}
	+ F_{\{812345, 6, 7, 9\}} + F_{\{81356,2,7,49\}}\\
	&& + F_{\{8136,2,5,479\}} + F_{\{816,2,3,4579\}}
	+ 2F_{\{81,2,6,34579\}} + F_{\{812,3,6,4579\}}\\
	&& + F_{\{812356,4,7,9\}} + F_{\{81236, 4, 5, 79\}}
	+ 2F_{\{8123,4,6,579\}} + F_{\{81234,5,6,79\}}\\
	&& + F_{\{82346,1,7,59\}} + F_{\{8234,1,6,579\}}
	+ F_{\{834,1,2,5679\}} + 2F_{\{83,1,4,25679\}}\\
	&& + 2F_{\{8,1,3,245679\}} + F_{\{813,2,4,5679\}}
	+ F_{\{81,2,3,45679\}} + F_{\{812346,5,7,9\}}\\
	&& + F_{\{81234,5,6,79\}}\\
	&& - 2F_{\{1,2,3,456789\}} - 2F_{\{4,5,6,123789\}}
	- F_{\{1,4,7,235689\}} - F_{\{2,6,7,134589\}}\\
	&& - F_{\{3,5,7,124689\}}.
\end{eqnarray*}
\end{example}

To make the given linear combination of F-curves effective, we need to add some numerically trivial linear combination of F-curves. By \cite[Theorem 7.3]{KM94}, the vector space of numerically trivial curve classes on $\Mzn$ is generated by Keel relations. 

\begin{definition}\cite[Lemma 7.2.1]{KM94}
Let $I_{1} \sqcup I_{2} \sqcup I_{3} \sqcup I_{4} \sqcup I_{5}$ be a partition of $[n]$. Then the following linear combination of F-curves is numerically trivial:
\[
	F_{I_{1}, I_{2}, I_{3}, I_{4}\sqcup I_{5}} + F_{I_{1}\sqcup I_{2}, I_{3}, I_{4}, I_{5}}
	- F_{I_{1}, I_{4}, I_{3}, I_{2}\sqcup I_{5}} - F_{I_{1}\sqcup I_{4}, I_{3}, I_{2}, I_{5}}
\]
We call relations of this form \emph{Keel relations} among F-curves.
\end{definition}
Note that in a Keel relation, all F-curves share a common set
$I_{3}$. Moreover, two F-curves with the same sign share exactly one common set,
and two F-curves with different sign share exactly two common sets. (For
instance, $F_{I_{1}, I_{2}, I_{3}, I_{4}\sqcup I_{5}}$ and $F_{I_{1},
  I_{4}, I_{3}, I_{2}\sqcup I_{5}}$ have common sets $I_{1}$ and
$I_{3}$.). Conversely, this is a necessary and sufficient condition
for the existence of a Keel relation containing certain F-curves.

Let $F_{I}$ and $F_{J}$ be two F-curves on $\Mzn$, where $I := \{I_{1}, I_{2}, I_{3}, I_{4}\}$ and $J:= \{J_{1}, J_{2}, J_{3}, J_{4}\}$. The \emph{common refinement} $R_{I, J}$ of two partitions $I$ and $J$ is the set of nonempty subsets of $[n]$ of the form $I_{i} \cap J_{j}$ for $1 \le i, j \le 4$. And the \emph{intersection} $S_{I,J}$ of $I$ and $J$ is the set of all nonempty subsets $K \subset [n]$ such that $K = I_{i}$ for some $i$ and $K = J_{j}$ for some $j$, too.

\begin{definition}
We say two F-curves $F_{I}, F_{J}$ on $\Mzn$ are \emph{adjacent} if $|R_{I, J}| = 5$. 
\end{definition}
(The motivation for this terminology will become clear below.)

\begin{lemma}
Let $F_{I}$ and $F_{J}$ be two F-curves on $\Mzn$. 
\begin{enumerate}
\item There is a Keel relation containing $F_{I}$ and $F_{J}$ if and only if $F_{I}$ and $F_{J}$ are adjacent.
\item In this case, the number of Keel relations containing both $F_{I}$ and $F_{J}$ is two.
\item If $|S_{I, J}| = 2$, then the signs of $F_{I}$ and $F_{J}$ in a Keel relation containing them are different. 
\item If $|S_{I,J}| = 1$, then the signs of $F_{I}$ and $F_{J}$ in a
  Keel relation containing them are same. 
\item Suppose $F_{I}, F_{J}, F_{K}$ are pairwise adjacent and $|S_{I, J}| =
  2$, $|S_{I, K}| = 2$, and $|S_{J,K}| = 1$.  Then there is a unique Keel relation containing all of them.
\end{enumerate}
\end{lemma}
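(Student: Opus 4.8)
The plan is to set up a bijective dictionary between Keel relations and combinatorial data and then read off all five claims from it. A Keel relation is the datum of a \emph{distinguished block} $A:=I_{3}$ together with a partition of $[n]\setminus A$ into four nonempty blocks carrying a $4$-cycle $P_{1}-P_{2}-P_{3}-P_{4}-P_{1}$ (equivalently, together with one of the three perfect matchings on those four blocks, the ``excluded'' one); two ordered $5$-partitions define the same relation precisely when they differ by the order-$8$ dihedral symmetry of this $4$-cycle fixing $A$. Directly from the defining formula one sees that the four F-curves occurring in the relation are the four curves obtained by contracting one edge of the $4$-cycle (i.e. merging two cyclically adjacent blocks), the two ``opposite'' edges giving the two curves of one sign and the remaining two edges the two curves of the other sign, and that $A$ is recovered as the unique block lying in all four of the four partitions. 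Setting this up carefully is routine but essential, since it is the symmetry count that makes ``exactly two'' and ``exactly one'' come out correctly.

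The engine for (1), (3), (4) is one small computation: if $F_{I}\neq F_{J}$ both occur in a Keel relation with distinguished block $A$, say by contracting edges $e$ and $e'$ of the associated $4$-cycle, then $A\in S_{I,J}$, and a two-line case split gives: if $e,e'$ share a vertex then $|R_{I,J}|=5$, $|S_{I,J}|=2$, and $F_{I},F_{J}$ have opposite signs; if $e,e'$ are vertex-disjoint then $|R_{I,J}|=5$, $|S_{I,J}|=1$, and $F_{I},F_{J}$ have the same sign. This gives one implication of (1) and all of (3) and (4). For the converse of (1) I would start from an adjacent pair $F_{I}\neq F_{J}$, use that every block of $I$ is a union of blocks of $R_{I,J}$ and vice versa to deduce, from $|R_{I,J}|=5$, that $|S_{I,J}|\in\{1,2\}$ and that $R_{I,J}$ has exactly the shape forced by the relevant case above; then pick $A\in S_{I,J}$, form the $5$-partition consisting of $A$ together with the four remaining members of $R_{I,J}$, and exhibit a $4$-cycle on those four blocks for which $F_{I}$ and $F_{J}$ are both edge-contractions. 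Tracing the two shapes shows the construction always succeeds, and consistently produces opposite signs when $|S_{I,J}|=2$ and equal signs when $|S_{I,J}|=1$.

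For (2): any Keel relation through $F_{I}$ and $F_{J}$ has distinguished block in $S_{I,J}$, and once $A\in S_{I,J}$ is fixed the $5$-partition is forced to be $\{A\}$ together with the four remaining members of $R_{I,J}$. So the count reduces to counting $4$-cycles on four fixed blocks through two prescribed edges $e_{I},e_{J}$: if $e_{I},e_{J}$ are cyclically adjacent (the $|S_{I,J}|=2$ case) exactly one $4$-cycle contains both, but there are two admissible choices of $A$, namely the two elements of $S_{I,J}$; if $e_{I},e_{J}$ are disjoint (the $|S_{I,J}|=1$ case) there is only one admissible $A$, but exactly two $4$-cycles contain both edges. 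In either case the total is $2$.

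For (5): in a Keel relation all four F-curves contain the distinguished block, so if $F_{J},F_{K}$ both occur then that block lies in $S_{J,K}=\{D\}$, hence is forced to be $D$. One then checks $D$ is also a block of $F_{I}$: the two blocks of $I$ lying in $S_{I,J}$ and the two blocks of $I$ lying in $S_{I,K}$ cannot be four distinct blocks, for then they would exhaust $I$ and force $D$ (a common block of $J$ and of $K$) to lie inside two complementary subsets of $[n]$; so they overlap in a block common to $I$, $J$, and $K$, necessarily $D$, and in exactly one such block, whence $S_{I,J}=\{D,X\}$ and $S_{I,K}=\{D,Y\}$ with $X\neq Y$. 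Writing $F_{I}$'s three non-$D$ blocks as $\{X,Y,Z\}$, pairwise adjacency forces $F_{J}$ to refine exactly $Z$ (not $X$, since $X\in S_{I,J}$; not $Y$, else $|R_{J,K}|>5$) and likewise forces $F_{K}$ to refine exactly $Z$; adjacency of $F_{J},F_{K}$ forces the two splittings of $Z$ to coincide, $Z=Z_{1}\sqcup Z_{2}$, and $|S_{J,K}|=1$ forces $F_{J}$ to merge $Y$ with one of $Z_{1},Z_{2}$ and $F_{K}$ to merge $X$ with the other, since otherwise that piece of $Z$ would be a block common to $F_{J}$ and $F_{K}$. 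Thus the $5$-partition is forced to be $\{D,X,Y,Z_{1},Z_{2}\}$ and the three contracted edges $\{Z_{1},Z_{2}\}$, $\{Y,Z_{i}\}$, $\{X,Z_{j}\}$ with $\{i,j\}=\{1,2\}$ form a Hamiltonian path on the four blocks, which lies on a unique $4$-cycle; this yields existence and uniqueness at once, with the sign pattern predicted by (3) and (4). The main obstacle I anticipate is exactly this bookkeeping in (5): it is the combination of ``$F_{I},F_{J},F_{K}$ pairwise adjacent'' with ``$|S_{J,K}|=1$'' that rules out the degenerate possibilities in which the three contracted edges would form a star or a triangle (so no $4$-cycle contains all three) or in which the two pieces of $Z$ get mismatched, and keeping track of which hypothesis excludes which bad case without hand-waving is where the argument must be watched most closely. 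The dictionary and parts (1)--(4) are a careful but mechanical unwinding of the defining formula.
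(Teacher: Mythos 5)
Your proposal is correct. Note that the paper's own ``proof'' of this lemma is literally the single line ``Straightforward computation,'' so there is nothing to compare against; what you have done is supply the computation the authors omit, organized by a genuinely useful device. Your dictionary --- a Keel relation is a distinguished block $A=I_3$ plus a $4$-cycle on the remaining four blocks, with the four F-curves given by contracting the four edges and with opposite edges carrying equal signs --- is easily checked against the defining formula (the merged pairs $\{I_4,I_5\},\{I_1,I_2\},\{I_2,I_5\},\{I_1,I_4\}$ are exactly the edges of the $4$-cycle $I_1\!-\!I_2\!-\!I_5\!-\!I_4\!-\!I_1$, with the positive pair and the negative pair each a pair of opposite edges), and once it is in place all five parts reduce to counting $4$-cycles through prescribed edges, exactly as you say: adjacent edges give $|S_{I,J}|=2$ and opposite signs, disjoint edges give $|S_{I,J}|=1$ and equal signs, and in either case the product (number of admissible $A$) $\times$ (number of $4$-cycles through the two edges) is $2\times 1$ or $1\times 2$. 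One small remark: a one-step rotation of the $4$-cycle sends the relation to its negative, so the order-$8$ dihedral symmetry identifies relations only up to sign; this is harmless here since parts (3)--(5) concern only relative signs. You are also right that part (5) is the only place requiring real care, and your elimination of the degenerate configurations (ruling out $J$ splitting $Y$ via $|R_{J,K}|\ge 6$, forcing the two splittings of $Z$ to agree, and using $|S_{J,K}|=1$ to force the crossed matching so that the three contracted edges form a Hamiltonian path, hence lie on a unique $4$-cycle) is exactly the right bookkeeping.
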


\begin{proof}
Straightforward computation.
\end{proof}

Next, we describe two families of graphs.  
\begin{definition}
Let $E$ be a $\ZZ$-linear combination of F-curves.   We define a
rooted infinite graph $G(E)$ as follows.  
\begin{enumerate}
\item The set of vertices of $G(E)$ is the infinite set of expressions equivalent
  to $E$.  
\item The root is the vertex $E$.  
\item Two vertices $E'$ and $E''$ are connected by an edge if $E'' = E' +
  R$ for some Keel relation $R$.
\end{enumerate}
For each nonnegative integer $l$, let $G(E,l)$ be the subgraph of
$G(E)$ consisting of vertices that are connected to the root by a path
of length less than or equal to $l$.
\end{definition}

We will restrict our attention to a smaller graph $\widetilde{G}(E)$.  It has
the same vertices as $G(E)$, but fewer edges:
\begin{definition}
Let $E$ be a $\ZZ$-linear combination of F-curves.   Write $E = \sum_{I \in \mathcal{I}} b_{I}F_{I} - \sum_{J \in \mathcal{J}}  c_{J}F_{J}$ with $b_J, c_J > 0$ for all $I \in \mathcal{I}$ and $J \in \mathcal{J}$.   We define a rooted graph $\widetilde{G}(E)$ as follows.  
\begin{enumerate}
\item The set of vertices of $\widetilde{G}(E)$ is the (infinite) set of expressions equivalent  to $E$.  
\item The root is the vertex $E$.  
\item Two vertices $E'$ and $E''$ are connected by an edge if $E'' = E' + R$, where $R$ is a Keel relation containing at least one positive curve $F_{I}$ from $E'$ and at least one negative curve $F_{J}$ from $E'$.
\end{enumerate}
For each nonnegative integer $l$, let $\widetilde{G}(E,l)$ be the subgraph of
$\widetilde{G}(E)$ consisting of vertices that are connected to the root by a path
of length less than or equal to $l$.
\end{definition}

We are now ready to describe our strategy for finding effective
expressions for curve classes:
\begin{strategy} \label{strategy:find effective sum}
Let $E = \sum_{I \in \mathcal{I}} b_{I}F_{I} - \sum_{J \in \mathcal{J}}  c_{J}F_{J}$ be a $\ZZ$-linear combination
of F-curves with $b_I, c_J > 0$ for all $I \in \mathcal{I}$ and $J \in
\mathcal{J}$.  

\begin{enumerate}
\item Let $m(E) = \sum_{J \in \mathcal{J}} c_J$.  We use the integer $m(E)$ as a measure  of how far the $E$ expression is from being effective.
\item Beginning with $l=1$, compute $\widetilde{G}(E,l)$.  If $\widetilde{G}(E,l)$ contains a vertex
  $E'$ corresponding to an expression $E' = \sum_{I \in \mathcal{I}'}
  b_{I}'F_{I} - \sum_{J \in \mathcal{J}'}  c_{J}'F_{J}$ with 
\[ m(E') = \sum_{J \in \mathcal{J}'} c_J' < m(E) = \sum_{J \in \mathcal{J}} c_J,
\]
start over again replacing $E$
  by $E'$.  If $m(E') = m(E)$ for all $E' \in \widetilde{G}(E,l)$,
  repeat this step with $l=l+1$.    
\item Continue until an effective expression ($m(E') = 0$) is found.
\end{enumerate}
\end{strategy}

This strategy is implemented in the \texttt{M0nbar} package for
\texttt{Macaulay2} in the command \texttt{seekEffectiveExpression}.  

Strategy \ref{strategy:find effective sum} is not an algorithm because it is not
guaranteed to produce an effective expression, even if an effective
expression is known to exist; for an example where the
strategy fails, see the calculations for the $D_4$ fixed
curve on $\ovop{M}_{0,12}$ at the link below to the second author's
webpage.  Nevertheless, although our strategy is not an algorithm, we
were still able to use it successfully to check that all the curves
$\Mzn^G$ for $G$ dihedral and $n \leq 12$ are effective linear combinations of F-curves.

By Lemmas \ref{lem:dihedralcurvesnk} and
\ref{lem:whendihedraliscyclic}, the curves $\Mzn^{D_k}$ which are not
of the form $C^{\sigma}$ are:
\begin{center} 
\begin{tabular}{ll}
$n=9$ & $k=3$ \\
$n=11$ & $k=3$\\
$n=12$ & $k=3$\\
$n=12$ & $k=4$.
\end{tabular}
\end{center}
Moreover, when $n=12$, there is also a curve of the form
$\Mzn^{A_4}$.  

We present our calculations for two examples ($n=9$ and $k=3$, and
$n=12$ and $k=3$) here in the paper.  The remaining calculations can
be found on our website for this project:
\begin{center}
http://faculty.fordham.edu/dswinarski/invariant-curves/
\end{center}

\begin{example}
Let $C$ be the curve in Example \ref{ex:k=3C_00}. We  found a
noneffective $\ZZ$-linear combination in Example
\ref{ex:linearcombinationn=9}. By using Strategy \ref{strategy:find
  effective sum}, we can find Keel relations which make the linear combination into an
effective one. In the following expression, each term in parentheses
is a Keel relation.
\begin{eqnarray*}
	C &\equiv& F_{\{82, 1,5, 34679\}} + F_{\{8, 1, 2, 345679\}}
	+ F_{\{81245, 3, 7, 69\}} + F_{\{8125, 3, 4,  679\}}\\
	&& + 2F_{\{812,3,5,4679\}} + F_{\{8123, 4, 5, 679\}}
	+ F_{\{812345, 6, 7, 9\}} + F_{\{81356,2,7,49\}}\\
	&& + F_{\{8136,2,5,479\}} + F_{\{816,2,3,4579\}}
	+ 2F_{\{81,2,6,34579\}} + F_{\{812,3,6,4579\}}\\
	&& + F_{\{812356,4,7,9\}} + F_{\{81236, 4, 5, 79\}}
	+ 2F_{\{8123,4,6,579\}} + F_{\{81234,5,6,79\}}\\
	&& + F_{\{82346,1,7,59\}} + F_{\{8234,1,6,579\}}
	+ F_{\{834,1,2,5679\}} + 2F_{\{83,1,4,25679\}}\\
	&& + 2F_{\{8,1,3,245679\}} + F_{\{813,2,4,5679\}}
	+ F_{\{81,2,3,45679\}} + F_{\{812346,5,7,9\}}\\
	&& + F_{\{81234,5,6,79\}}\\
	&&- 2F_{\{1,2,3,456789\}} - 2F_{\{4,5,6,123789\}}
	- F_{\{1,4,7,235689\}} - F_{\{2,6,7,134589\}}\\
	&& - F_{\{3,5,7,124689\}}\\
	&& + \left(F_{\{1,2,3,456789\}} + F_{\{13,2,8,45679\}}
	- F_{\{1,2,8,345679\}} - F_{\{81, 2, 3, 45679\}}\right)\\
	&& + \left(F_{\{4,5,6,123789\}} + F_{\{8123, 45, 6, 79\}}
	- F_{\{8123,4,6,579\}} - F_{\{81234, 5, 6, 79\}}\right)\\
	&& + \left(F_{\{4,5,6,123789\}} + F_{\{4,56,79,8123\}}
	- F_{\{81236,4,5,79\}} - F_{\{8123,4,6,579\}}\right)\\
	&& + \left(F_{\{812,4, 5, 3679\}} + F_{\{8124, 3, 5, 679\}}
	- F_{\{812,3,5,4679\}} - F_{\{8123, 4, 5, 679\}}\right)\\
	&& + \left(F_{\{8124, 5, 6, 379\}} + F_{\{81246, 3, 5, 79\}} 
	- F_{\{8124, 3, 5, 679\}} - F_{\{81234, 5, 6, 79\}}\right)\\
	&& + \left(F_{\{812469, 3,5,7\}} + F_{\{81246, 5, 9, 37\}} 
	- F_{\{81246, 3, 5, 79\}} - F_{\{812346, 5, 7, 9\}}\right)\\
	&& + \left(F_{\{81,3,6,24579\}} + F_{\{813,2,6,4579\}}
	- F_{\{81,2,6,34579\}} - F_{\{812,3,6,4579\}}\right)\\
	&& + \left(F_{\{813,6,45,279\}} + F_{\{81345, 2, 6, 79\}}
	- F_{\{813,2,6,4579\}} - F_{\{8123,6,45,79\}}\right)\\
	&& + \left(F_{\{81345, 6, 9, 27\}} + F_{\{813459, 2, 6, 7\}}
	- F_{\{81345, 2, 6, 79\}} - F_{\{812345, 6, 7, 9\}}\right)\\
	&& +\left(F_{\{823,1,4,5679\}} + F_{\{83,1,2,45679\}}
	- F_{\{834,1,2,5679\}} - F_{\{83,1,4,25679\}}\right)\\
	&& + \left(F_{\{1,8,23, 45679\}} + F_{\{1,2,3,456789\}}
	- F_{\{83,1,2,45679\}} - F_{\{1,3,8, 245679\}}\right)\\
	&& + \left(F_{\{83,2,4,15679\}} + F_{\{823,1,4,5679\}}
	- F_{\{83,1,4,25679\}} - F_{\{813,2,4,5679\}}\right)\\
	&& + \left(F_{\{823,4,56,179\}} + F_{\{82356,1,4,79\}}
	- F_{\{823,1,4,5679\}} - F_{\{8123,4,56,79\}}\right)\\
	&& + \left(F_{\{82356,4,17,9\}} + F_{\{1,4,7,235689\}}
	- F_{\{82356, 1, 4, 79\}} - F_{\{812356, 4, 7, 9\}}\right)
\end{eqnarray*}
\begin{eqnarray*}
	&\equiv& F_{\{82, 1,5, 34679\}} + F_{\{81245, 3, 7, 69\}} 
	+ F_{\{8125, 3, 4,  679\}} + F_{\{812,3,5,4679\}}\\
	&& + F_{\{81356,2,7,49\}} + F_{\{8136,2,5,479\}}
	+ F_{\{816,2,3,4579\}} + F_{\{81,2,6,34579\}}\\
	&& + F_{\{82346,1,7,59\}} + F_{\{8234,1,6,579\}}
	+ F_{\{8,1,3,245679\}} + F_{\{13,2,8,45679\}}\\
	&& + F_{\{812,4, 5, 3679\}} + F_{\{8124, 5, 6, 379\}}
	+ F_{\{81246, 5, 9, 37\}} + F_{\{81,3,6,24579\}}\\ 
	&& + F_{\{813,6,45,279\}} + F_{\{81345, 6, 9, 27\}}
	+ F_{\{823,1,4,5679\}} + F_{\{8,1,23, 45679\}} \\
	&& + F_{\{83,2,4,15679\}} + F_{\{823,4,56,179\}} 
	+ F_{\{82356,4,17,9\}}.
\end{eqnarray*}
\end{example}

\section{An example on $\ovop{M}_{0,12}$}\label{sec:example}

In this section, we compute a numerically equivalent effective linear combination of F-curves for Example \ref{ex:dihedraln=12}. Recall that in the example, 
\[
	G = \langle (123)(456)(789)(abc), (14)(26)(35)(89)(bc)\rangle \cong D_{3},
\]
$G$ is of type $(0, 2)$. Let $C = \ovop{M}_{0,12}^{G}\subset \ovop{M}_{0,12}$.

By using the method of Section \ref{sec:LosevManin}, on $\ovop{L}_{10}$, we have
\begin{eqnarray*}
\overline{C}&\equiv& F_{\{82,1,5,3467abc9\}} + F_{\{8,1,2,34567abc9\}} + F_{\{812457ab,3,c,69\}}
+ F_{\{812457a,3,b,6c9\}}\\
&& + F_{\{812457,3,a,6bc9\}} + F_{\{81245, 3,7, 6abc9\}} + F_{\{8125,3,4,67abc9\}} + 2F_{\{812,3,5,467abc9\}}\\
&& + F_{\{8123,4,5,67abc9\}} + F_{\{8123457ab,6,c,9\}} + F_{\{8123457a,6,b,c9\}} + F_{\{8123457, 6, a, bc9\}}\\
&& + F_{\{812345, 6, 7, abc9\}}\\
%\end{eqnarray*}
%\begin{eqnarray*}
&& + F_{\{813567ab,2,c,49\}} + F_{\{813567a,2,b,4c9\}} + F_{\{813567,2,a,4bc9\}} + F_{\{81356,2,7,4abc9\}}\\
&& + F_{\{8136,2,5,47abc9\}} + F_{\{816,2,3,457abc9\}} + 2F_{\{81,2,6,3457abc9\}} + F_{\{812,3,6,457abc9\}}\\
&& + F_{\{8123567ab,4,c,9\}} + F_{\{8123567a,4,b,c9\}} + F_{\{8123567,4,a,bc9\}} + F_{\{812356,4,7,abc9\}}\\
&& + F_{\{81236,4,5,7abc9\}} + 2F_{\{8123,4,6,57abc9\}} + F_{\{81234, 5, 6, 7abc9\}}\\
%\end{eqnarray*}
%\begin{eqnarray*}
&& + F_{\{823467ab,1,c,59\}} + F_{\{823467a,1,b,5c9\}} + F_{\{823467, 1, a, 5bc9\}} + F_{\{82346, 1, 7, 5abc9\}}\\
&& + F_{\{8234,1,6,57abc9\}} + F_{\{834,1,2,567abc9\}} + 2F_{\{83,1,4,2567abc9\}} + 2F_{\{8,1,3,24567abc9\}}\\
&& + F_{\{813,2,4,567abc9\}} + F_{\{81,2,3,4567abc9\}} + F_{\{8123467ab,5,c,9\}} + F_{\{8123467a,5,b,c9\}}\\
&& + F_{\{8123467,5,a,bc9\}} + F_{\{812346,5,7,abc9\}} + F_{\{81234,5,6,7abc9\}}.
\end{eqnarray*}
In this degeneration, there are three rational tails. Rows 1--4 in the
expression above form the first tail, rows 4--8 form a second
tail, and rows 9--12 form the third tail.   

By considering the proper transform, we deduce that on $\ovop{M}_{0,12}$,
\begin{eqnarray*}
\overline{C}&\equiv& F_{\{82,1,5,3467abc9\}} + F_{\{8,1,2,34567abc9\}} + F_{\{812457ab,3,c,69\}}
+ F_{\{812457a,3,b,6c9\}}\\
&& + F_{\{812457,3,a,6bc9\}} + F_{\{81245, 3,7, 6abc9\}} + F_{\{8125,3,4,67abc9\}} + 2F_{\{812,3,5,467abc9\}}\\
&& + F_{\{8123,4,5,67abc9\}} + F_{\{8123457ab,6,c,9\}} + F_{\{8123457a,6,b,c9\}} + F_{\{8123457, 6, a, bc9\}}\\
&& + F_{\{812345, 6, 7, abc9\}}\\
%\end{eqnarray*}
%\begin{eqnarray*}
&& + F_{\{813567ab,2,c,49\}} + F_{\{813567a,2,b,4c9\}} + F_{\{813567,2,a,4bc9\}} + F_{\{81356,2,7,4abc9\}}\\
&& + F_{\{8136,2,5,47abc9\}} + F_{\{816,2,3,457abc9\}} + 2F_{\{81,2,6,3457abc9\}} + F_{\{812,3,6,457abc9\}}\\
&& + F_{\{8123567ab,4,c,9\}} + F_{\{8123567a,4,b,c9\}} + F_{\{8123567,4,a,bc9\}} + F_{\{812356,4,7,abc9\}}\\
&& + F_{\{81236,4,5,7abc9\}} + 2F_{\{8123,4,6,57abc9\}} + F_{\{81234, 5, 6, 7abc9\}}\\
%\end{eqnarray*}
%\begin{eqnarray*}
&& + F_{\{823467ab,1,c,59\}} + F_{\{823467a,1,b,5c9\}} + F_{\{823467, 1, a, 5bc9\}} + F_{\{82346, 1, 7, 5abc9\}}\\
&& + F_{\{8234,1,6,57abc9\}} + F_{\{834,1,2,567abc9\}} + 2F_{\{83,1,4,2567abc9\}} + 2F_{\{8,1,3,24567abc9\}}\\
&& + F_{\{813,2,4,567abc9\}} + F_{\{81,2,3,4567abc9\}} + F_{\{8123467ab,5,c,9\}} + F_{\{8123467a,5,b,c9\}}\\
&& + F_{\{8123467,5,a,bc9\}} + F_{\{812346,5,7,abc9\}} + F_{\{81234,5,6,7abc9\}}\\
%\end{eqnarray*}
%\begin{eqnarray*}
&& - 2F_{\{1,2,3,456789abc\}} - 2F_{\{4,5,6,123789abc\}} - F_{\{1,4,7,235689abc\}} - F_{\{3,5,7,124689abc\}}\\
&& - F_{\{2,6,7,134589abc\}} - F_{\{1,5,c,2346789ab\}} - F_{\{2,6,a,1345789bc\}} - F_{\{3,4,b,1256789ac\}} \\
&& - F_{\{1,4,a,2356789bc\}}- F_{\{2,5,b,1346789ac\}} - F_{\{3,6,c,1245789ab\}} - F_{\{1,6,b,2345789ac\}}\\
&&  - F_{\{2,4,c,1356789ab\}} - F_{\{3,5,a,1246789bc\}}.
\end{eqnarray*}
Using the \texttt{seekEffectiveExpression} command in the
\texttt{M0nbar} package for \texttt{Macaulay2}, we obtain 
\begin{eqnarray*}
%--March 19
\overline{C}&\equiv& F_{\{82,1,5,3467abc9\}} + F_{\{8,1,2,34567abc9\}} + F_{\{812457ab,3,c,69\}}
+ F_{\{812457a,3,b,6c9\}}\\
&& + F_{\{812457,3,a,6bc9\}} + F_{\{81245, 3,7, 6abc9\}} + F_{\{8125,3,4,67abc9\}} + 2F_{\{812,3,5,467abc9\}}\\
&& + F_{\{8123,4,5,67abc9\}} + F_{\{8123457ab,6,c,9\}} + F_{\{8123457a,6,b,c9\}} + F_{\{8123457, 6, a, bc9\}}\\
&& + F_{\{812345, 6, 7, abc9\}}\\
&& + F_{\{813567ab,2,c,49\}} + F_{\{813567a,2,b,4c9\}} + F_{\{813567,2,a,4bc9\}} + F_{\{81356,2,7,4abc9\}}\\
&& + F_{\{8136,2,5,47abc9\}} + F_{\{816,2,3,457abc9\}} + 2F_{\{81,2,6,3457abc9\}} + F_{\{812,3,6,457abc9\}}\\
&& + F_{\{8123567ab,4,c,9\}} + F_{\{8123567a,4,b,c9\}} + F_{\{8123567,4,a,bc9\}} + F_{\{812356,4,7,abc9\}}\\
&& + F_{\{81236,4,5,7abc9\}} + 2F_{\{8123,4,6,57abc9\}} + F_{\{81234, 5, 6, 7abc9\}}\\
&& + F_{\{823467ab,1,c,59\}} + F_{\{823467a,1,b,5c9\}} + F_{\{823467, 1, a, 5bc9\}} + F_{\{82346, 1, 7, 5abc9\}}\\
&& + F_{\{8234,1,6,57abc9\}} + F_{\{834,1,2,567abc9\}} + 2F_{\{83,1,4,2567abc9\}} + 2F_{\{8,1,3,24567abc9\}}\\
&& + F_{\{813,2,4,567abc9\}} + F_{\{81,2,3,4567abc9\}} + F_{\{8123467ab,5,c,9\}} + F_{\{8123467a,5,b,c9\}}\\
&& + F_{\{8123467,5,a,bc9\}} + F_{\{812346,5,7,abc9\}} + F_{\{81234,5,6,7abc9\}}\\
&& - 2F_{\{1,2,3,456789abc\}} - 2F_{\{4,5,6,123789abc\}} - F_{\{1,4,7,235689abc\}} - F_{\{3,5,7,124689abc\}}\\
&& - F_{\{2,6,7,134589abc\}} - F_{\{1,5,c,2346789ab\}} - F_{\{2,6,a,1345789bc\}} - F_{\{3,4,b,1256789ac\}} \\
&& - F_{\{1,4,a,2356789bc\}}- F_{\{2,5,b,1346789ac\}} - F_{\{3,6,c,1245789ab\}} - F_{\{1,6,b,2345789ac\}}\\
&&  - F_{\{2,4,c,1356789ab\}} - F_{\{3,5,a,1246789bc\}}\\
&& + \left(F_{\{1238, 4, 56, 79abc\}} - F_{\{1238, 4, 579abc, 6\}} - F_{\{12368, 4, 5, 79abc\}} + F_{\{123789abc, 4, 5, 6\}}\right) \\
&& + \left(F_{\{1238, 45, 6, 79abc\}} - F_{\{1238, 4, 579abc, 6\}} - F_{\{12348, 5, 6, 79abc\}} + F_{\{123789abc, 4, 5, 6\}}\right) \\
&& + \left(F_{\{124578ab, 36, 9, c\}} - F_{\{124578ab, 3, 69, c\}} - F_{\{1234578ab, 6, 9, c\}} + F_{\{1245789ab, 3, 6, c\}}\right) \\
&& + \left(F_{\{12, 3, 45679abc, 8\}} - F_{\{1, 245679abc, 3, 8\}} - F_{\{18, 2, 3, 45679abc\}} + F_{\{1, 2, 3, 456789abc\}}\right) \\
&& + \left(F_{\{135678ab, 24, 9, c\}} - F_{\{1235678ab, 4, 9, c\}} - F_{\{135678ab, 2, 49, c\}} + F_{\{1356789ab, 2, 4, c\}}\right) \\
&& + \left(F_{\{15, 234678ab, 9, c\}} - F_{\{1, 234678ab, 59, c\}} - F_{\{1234678ab, 5, 9, c\}} + F_{\{1, 2346789ab, 5, c\}}\right) \\
&& + \left(F_{\{1248, 3, 57, 69abc\}} - F_{\{12458, 3, 69abc, 7\}} - F_{\{1248, 3, 5, 679abc\}} + F_{\{124689abc, 3, 5, 7\}}\right) \\
&& + \left(F_{\{128, 3679abc, 4, 5\}} - F_{\{1238, 4, 5, 679abc\}} - F_{\{128, 3, 4679abc, 5\}} + F_{\{1248, 3, 5, 679abc\}}\right) \\
&& + \left(F_{\{1, 23, 45679abc, 8\}} - F_{\{1, 245679abc, 3, 8\}} - F_{\{1, 2, 38, 45679abc\}} + F_{\{1, 2, 3, 456789abc\}}\right) \\
&& + \left(F_{\{1, 238, 4, 5679abc\}} - F_{\{1, 25679abc, 38, 4\}} - F_{\{1, 2, 348, 5679abc\}} + F_{\{1, 2, 38, 45679abc\}}\right) \\
&& + \left(F_{\{1, 2368, 47, 59abc\}} - F_{\{1, 23468, 59abc, 7\}} - F_{\{1, 2368, 4, 579abc\}} + F_{\{1, 235689abc, 4, 7\}}\right) \\
&& + \left(F_{\{1, 238, 4579abc, 6\}} - F_{\{1, 2348, 579abc, 6\}} - F_{\{1, 238, 4, 5679abc\}} + F_{\{1, 2368, 4, 579abc\}}\right) \\
&& + \left(F_{\{1368, 2, 479ac, 5b\}} - F_{\{1368, 2, 479abc, 5\}} - F_{\{13568, 2, 479ac, b\}} + F_{\{1346789ac, 2, 5, b\}}\right) \\
&& + \left(F_{\{13568b, 2, 49c, 7a\}} - F_{\{135678a, 2, 49c, b\}} - F_{\{13568, 2, 49bc, 7a\}} + F_{\{13568, 2, 479ac, b\}}\right) \\
&& + \left(F_{\{1345689bc, 2, 7, a\}} - F_{\{13568, 2, 49abc, 7\}} - F_{\{135678, 2, 49bc, a\}} + F_{\{13568, 2, 49bc, 7a\}}\right) \\
&& + \left(F_{\{1, 238, 4579ac, 6b\}} - F_{\{1, 238, 4579abc, 6\}} - F_{\{1, 2368, 4579ac, b\}} + F_{\{1, 2345789ac, 6, b\}}\right) \\
&& + \left(F_{\{1, 2368b, 47, 59ac\}} - F_{\{1, 2368, 47, 59abc\}} - F_{\{1, 234678, 59ac, b\}} + F_{\{1, 2368, 4579ac, b\}}\right) \\
&& + \left(F_{\{1, 234678b, 59c, a\}} - F_{\{1, 234678, 59bc, a\}} - F_{\{1, 234678a, 59c, b\}} + F_{\{1, 234678, 59ac, b\}}\right) \\
&& + \left(F_{\{134589bc, 2, 67, a\}} - F_{\{1345689bc, 2, 7, a\}} - F_{\{134589bc, 2, 6, 7a\}} + F_{\{134589abc, 2, 6, 7\}}\right)
\end{eqnarray*}
\begin{eqnarray*}
&& + \left(F_{\{134589bc, 26, 7, a\}} - F_{\{134589bc, 2, 67, a\}} - F_{\{1234589bc, 6, 7, a\}} + F_{\{1345789bc, 2, 6, a\}}\right) \\
&& + \left(F_{\{123458, 6, 7a, 9bc\}} - F_{\{123458, 6, 7, 9abc\}} - F_{\{1234578, 6, 9bc, a\}} + F_{\{1234589bc, 6, 7, a\}}\right) \\
&& + \left(F_{\{1358b, 2, 49c, 67a\}} - F_{\{13568b, 2, 49c, 7a\}} - F_{\{1358b, 2, 479ac, 6\}} + F_{\{134589bc, 2, 6, 7a\}}\right)\\
&& + \left(F_{\{18, 2, 35b, 4679ac\}} - F_{\{18, 2, 34579abc, 6\}} - F_{\{168, 2, 35b, 479ac\}} + F_{\{1358b, 2, 479ac, 6\}}\right) \\
&& + \left(F_{\{146789ac, 2, 3, 5b\}} - F_{\{168, 2, 3, 4579abc\}} - F_{\{1368, 2, 479ac, 5b\}} + F_{\{168, 2, 35b, 479ac\}}\right) \\
&& + \left(F_{\{12478, 3, 5a, 69bc\}} - F_{\{124578, 3, 69bc, a\}} - F_{\{12478, 3, 5, 69abc\}} + F_{\{1246789bc, 3, 5, a\}}\right) \\
&& + \left(F_{\{1248, 35, 69abc, 7\}} - F_{\{1248, 3, 57, 69abc\}} - F_{\{12348, 5, 69abc, 7\}} + F_{\{12478, 3, 5, 69abc\}}\right) \\
&& + \left(F_{\{123478, 5, 6, 9abc\}} - F_{\{12348, 5, 6, 79abc\}} - F_{\{123468, 5, 7, 9abc\}} + F_{\{12348, 5, 69abc, 7\}}\right) \\
&& + \left(F_{\{1a, 25679bc, 38, 4\}} - F_{\{1, 25679abc, 38, 4\}} - F_{\{138, 25679bc, 4, a\}} + F_{\{1, 2356789bc, 4, a\}}\right) \\
&& + \left(F_{\{138a, 2567, 4, 9bc\}} - F_{\{1235678, 4, 9bc, a\}} - F_{\{138, 2567, 4, 9abc\}} + F_{\{138, 25679bc, 4, a\}}\right) \\
&& + \left(F_{\{1389abc, 2, 4, 567\}} - F_{\{138, 2, 4, 5679abc\}} - F_{\{1238, 4, 567, 9abc\}} + F_{\{138, 2567, 4, 9abc\}}\right) \\
&& + \left(F_{\{12389abc, 4, 56, 7\}} - F_{\{1238, 4, 56, 79abc\}} - F_{\{123568, 4, 7, 9abc\}} + F_{\{1238, 4, 567, 9abc\}}\right) \\
&& + \left(F_{\{125678a, 3b, 4, 9c\}} - F_{\{1235678a, 4, 9c, b\}} - F_{\{125678a, 3, 4, 9bc\}} + F_{\{1256789ac, 3, 4, b\}}\right) \\
&& + \left(F_{\{18a, 2567, 39bc, 4\}} - F_{\{138a, 2567, 4, 9bc\}} - F_{\{18a, 25679bc, 3, 4\}} + F_{\{125678a, 3, 4, 9bc\}}\right) \\
&& + \left(F_{\{1a, 235679bc, 4, 8\}} - F_{\{1a, 25679bc, 38, 4\}} - F_{\{125679abc, 3, 4, 8\}} + F_{\{18a, 25679bc, 3, 4\}}\right) \\
&& + \left(F_{\{12, 3, 48, 5679abc\}} - F_{\{12, 3, 45679abc, 8\}} - F_{\{128, 3, 4, 5679abc\}} + F_{\{125679abc, 3, 4, 8\}}\right) \\
&& + \left(F_{\{1238, 4, 5, 679abc\}} - F_{\{128, 3679abc, 4, 5\}} - F_{\{1258, 3, 4, 679abc\}} + F_{\{128, 3, 4, 5679abc\}}\right)
\end{eqnarray*}

\begin{eqnarray*}
&\equiv&  F_{\{124578ab, 36, 9, c\}} + F_{\{1, 28, 34679abc, 5\}} +
F_{\{1, 2, 345679abc, 8\}} + F_{\{1a, 235679bc, 4, 8\}} \\
&& + F_{\{1234678, 5, 9bc, a\}} + F_{\{1238, 45, 6, 79abc\}} +
F_{\{134589bc, 26, 7, a\}} + F_{\{1, 2368b, 47, 59ac\}} \\
&& + F_{\{1, 234678b, 59c, a\}} + F_{\{125678a, 3b, 4, 9c\}} + F_{\{1248,
  35, 69abc, 7\}} + F_{\{1238, 4, 5, 679abc\}} \\
&& + F_{\{1, 23, 45679abc, 8\}} + F_{\{1234678a, 5, 9c, b\}} + F_{\{15,
  234678ab, 9, c\}} + F_{\{12389abc, 4, 56, 7\}} \\
&& + F_{\{12478, 3, 5a, 69bc\}} + F_{\{146789ac, 2, 3, 5b\}} +
F_{\{1358b, 2, 49c, 67a\}} + F_{\{128, 3, 4579abc, 6\}} \\
&&+ F_{\{18, 2, 35b, 4679ac\}} + F_{\{18a, 2567, 39bc, 4\}} + F_{\{12,
  3, 48, 5679abc\}} + F_{\{123478, 5, 6, 9abc\}} \\
&&+ F_{\{128, 3, 4679abc, 5\}} + F_{\{1, 238, 4579ac, 6b\}} +
F_{\{124578a, 3, 69c, b\}} + F_{\{123458, 6, 7a, 9bc\}} \\
&& + F_{\{18, 2, 34579abc, 6\}} + F_{\{1234578a, 6, 9c, b\}} + F_{\{135678ab, 24, 9, c\}} + F_{\{1389abc, 2, 4, 567\}}.
\end{eqnarray*}

The calculation took 162 seconds.  

Recall that in Example \ref{ex:dihedraln=12} we described a noneffective
expression for this curve class with 103 terms that was obtained using
only simple linear algebra techniques (as opposed to the toric
degeneration techniques used here).  The
\texttt{seekEffectiveExpression} command was also able to find an
effective expression starting from the 103 term expression, but the
calculation took 21210 seconds.  We consider this example as evidence that the toric degeneration method is
superior to the simple linear algebra approach.

%%%%%%%%%%%%%%%%%%%%%%%%%%%%%%%%%%%%%

\begin{bibdiv}
\begin{biblist}

\bib{Car09}{unpublished}{
	Author = {Carr, Sarah},
	Note = {arXiv:0911.2649},
	Title = {A polygonal presentation of $Pic(\overline{\mathfrak{M}}_{0,n})$},
	Year = {2009}}

\bib{CT12}{incollection}{
	Address = {Providence, RI},
	Author = {Castravet, Ana-Maria},
	Author = {Tevelev, Jenia},
	Booktitle = {Compact moduli spaces and vector bundles},
	Pages = {19--67},
	Publisher = {Amer. Math. Soc.},
	Series = {Contemp. Math.},
	Title = {Rigid curves on {$\overline M_{0,n}$} and arithmetic breaks},
	Volume = {564},
	Year = {2012}}

\bib{CT13a}{article}{
	Author = {Castravet, Ana-Maria},
	Author = {Tevelev, Jenia},
	Journal = {J. Reine Angew. Math.},
	Pages = {121--180},
	Title = {Hypertrees, projections, and moduli of stable rational curves},
	Volume = {675},
	Year = {2013}}

\bib{CT13b}{unpublished}{
	Author = {Castravet, Ana-Maria}
	Author = {Tevelev, Jenia},
	Note = {arXiv:1311.7673},
	Title = {$\overline{M}_{0,n}$ is not a Mori Dream Space},
	Year = {2013}}

\bib{Che11}{article}{
	Author = {Chen, Dawei},
	Journal = {Adv. Math.},
	Number = {2},
	Pages = {1135--1162},
	Title = {Square-tiled surfaces and rigid curves on moduli spaces},
	Volume = {228},
	Year = {2011}}

\bib{DGJ14}{unpublished}{
	Author = {Doran, Brent}
	Author = {Giansiracusa, Noah}
	Author =  {Jensen, David},
	Note = {arXiv:1401.0350},
	Title = {A simplicial approach to effective divisors in $\overline{M}_{0,n}$},
	Year = {2014}}

\bib{GKZ08}{book}{
	Address = {Boston, MA},
	Author = {Gelfand, I. M.},
	Author = {Kapranov, M. M.},
	Author = {Zelevinsky, A. V.},
	Publisher = {Birkh{\"a}user Boston Inc.},
	Series = {Modern Birkh{\"a}user Classics},
	Title = {Discriminants, resultants and multidimensional determinants},
	Year = {2008}}

\bib{M2}{book}{
	Author = {Grayson, Dan},
	Author = {Stillman, Mike},
	Title = {Macaulay2 version 1.6}}

\bib{Has03}{article}{
	Author = {Hassett, Brendan},
	Journal = {Adv. Math.},
	Number = {2},
	Pages = {316--352},
	Title = {Moduli spaces of weighted pointed stable curves},
	Volume = {173},
	Year = {2003}}

%\bib{Hu05}{article}{
%	Author = {Hu, Yi},
%	Journal = {J. Differential Geom.},
%	Number = {3},
%	Pages = {399--440},
%	Title = {Topological aspects of {C}how quotients},
%	Volume = {69},
%	Year = {2005}}

\bib{Kap93b}{incollection}{
	Address = {Providence, RI},
	Author = {Kapranov, M. M.},
	Booktitle = {I. {M}. {G}elfand {S}eminar},
	Pages = {29--110},
	Publisher = {Amer. Math. Soc.},
	Series = {Adv. Soviet Math.},
	Title = {Chow quotients of {G}rassmannians. {I}},
	Volume = {16},
	Year = {1993}}

\bib{Kee92}{article}{
	Author = {Keel, Sean},
	Journal = {Trans. Amer. Math. Soc.},
	Number = {2},
	Pages = {545--574},
	Title = {Intersection theory of moduli space of stable {$n$}-pointed curves of genus zero},
	Volume = {330},
	Year = {1992}}

\bib{KM96}{unpublished}{
	Author = {Keel, Sean},
	author = {McKernan, James},
	Note = {arXiv:9607009},
	Title = {Contractible Extremal Rays on $\overline{M}_{0,n}$},
	Url = {http://arxiv.org/abs/alg-geom/9607009},
	Year = {1996}}

\bib{Kol96}{book}{
	Address = {Berlin},
	Author = {Koll{\'a}r, J{\'a}nos},
	Publisher = {Springer-Verlag},
	Series = {Ergebnisse der Mathematik und ihrer Grenzgebiete. 3. Folge. A Series of Modern Surveys in Mathematics [Results in Mathematics and Related Areas. 3rd Series. A Series of Modern Surveys in Mathematics]},
	Title = {Rational curves on algebraic varieties},
	Volume = {32},
	Year = {1996}}

\bib{KM94}{article}{
	Author = {Kontsevich, M.}
	Author = {Manin, Yu.},
	Journal = {Comm. Math. Phys.},
	Number = {3},
	Pages = {525--562},
	Title = {Gromov-{W}itten classes, quantum cohomology, and enumerative geometry},
	Volume = {164},
	Year = {1994}}

\bib{LM00}{article}{
	Author = {Losev, A.}
	Author = {Manin, Y.},
	Journal = {Michigan Math. J.},
	Pages = {443--472},
	Title = {New moduli spaces of pointed curves and pencils of flat connections},
	Volume = {48},
	Year = {2000}}

\bib{Moo13}{article}{
	Author = {Moon, Han-Bom},
	Journal = {Proc. Amer. Math. Soc.},
	Number = {11},
	Pages = {3771--3785},
	Title = {Log canonical models for the moduli space of stable pointed rational curves},
	Volume = {141},
	Year = {2013}}

\bib{Mum99}{book}{
	Address = {Berlin},
	Author = {Mumford, David},
	Edition = {expanded},
	Publisher = {Springer-Verlag},
	Series = {Lecture Notes in Mathematics},
	Title = {The red book of varieties and schemes},
	Volume = {1358},
	Year = {1999}}

\bib{Pan97}{article}{
	Author = {Pandharipande, Rahul},
	Journal = {Internat. Math. Res. Notices},
	Number = {4},
	Pages = {173--186},
	Title = {The canonical class of {$\overline{M}_{0,n}(\bold P^r,d)$} and enumerative geometry},
	Year = {1997}}

\bib{Ver02}{article}{
	Author = {Vermeire, Peter},
	Journal = {J. Algebra},
	Number = {2},
	Pages = {780--784},
	Title = {A counterexample to {F}ulton's conjecture on {$\overline M_{0,n}$}},
	Volume = {248},
	Year = {2002}}
	
\end{biblist}
\end{bibdiv}

\end{document}